\documentclass[11pt,reqno,twoside]{article}

\usepackage{cite}
\usepackage{graphicx}
\usepackage{amssymb}
\usepackage{epstopdf}
\usepackage{float}
\usepackage{placeins}

\usepackage{booktabs} 
\usepackage{array} 
\usepackage{paralist} 
\usepackage{verbatim} 
\usepackage{subfigure} 
\usepackage{tabularx}
\usepackage{amsmath,amsfonts,amsthm,mathrsfs,amssymb,cite}
\usepackage[usenames]{color}
\usepackage{bm}

\newtheorem{thm}{Theorem}[section]

\newtheorem{lem}{Lemma}[section]

\theoremstyle{definition}

\theoremstyle{remark}

\newtheorem{rem}{Remark}[section]
\numberwithin{equation}{section}

\newcommand{\bu}{\bm{u}}

\newcommand{\bGa}{\bm{\Gamma}}
\newcommand{\bx}{\bm{x}}

\newcommand{\by}{\bm{y}}
\newcommand{\rmi}{\mathrm{i}}

\newcommand{\bz}{\bm{z}}

\newcommand{\Lcal}{\mathcal{L}}

\usepackage[hidelinks]{hyperref}

\title{A Non-Decoupled Time-Domain Direct Sampling Method for Inverse Elastic Medium Scattering}

\begin{document}

\author{
		Lefu Cai\footnote{ School of Mathematics, Harbin Institute of Technology, Harbin, People’s Republic of China. (25B312009@stu.hit.edu.cn).}
		\and
		Hongjie Li\footnote{Yau Mathematical Sciences Center, Tsinghua University, Beijing, China. The work of this author was substantially supported by NSFC grant (12401561). (hongjieli@tsinghua.edu.cn; hongjie\_li@yeah.net).}
		\and
		Xianchao Wang\footnote{School of Mathematics, Harbin Institute of Technology, Harbin, People’s Republic of China. The work of this author was supported by NSFC grant 12471397 and Heilongjiang
Provincial Natural Science Foundation grant YQ2024A003. (xcwang90@gmail.com).}
	}

    \date{}
    \maketitle

\begin{abstract}
This work is concerned with an inverse medium problem for elastic waves, in which unknown inhomogeneities are reconstructed from time-resolved boundary measurements.
We propose a novel time-domain direct sampling method for locating scatterers from a single incident source, without imposing specific assumptions on the temporal profile of the excitation.
In particular, the imaging functional introduces a time-shifted correlation strategy that replaces the traditional $P$-$S$ wave decomposition with a travel-time alignment mechanism, thereby enabling direct imaging from the coupled elastic wave field.
To analyze the proposed time-domain imaging functional, we employ Parseval's identity for the Fourier--Laplace transform and reformulate the functional in the frequency domain. By exploiting properties of modified Bessel functions, we characterize the asymptotic behavior of the imaging functional and show that it attains its maximum at the target location, which enables reliable identification of the scatterer.
Rigorous theoretical justifications are provided to substantiate the effectiveness of the proposed method.
Numerical experiments are also presented to demonstrate its performance and applicability.
\medskip

\noindent{\bf Keywords:} inverse medium problem, elastic waves, time-domain direct sampling method, modified Bessel functions

\noindent{\bf 2020 Mathematics Subject Classification:}~~

\end{abstract}

\maketitle

\section{Introduction}\label{sec:1}

In this paper, we investigate an inverse medium scattering problem in time-domain elasticity, where the objective is to reconstruct unknown scatterers from boundary measurements of coupled elastic waves.
Typically, an incident elastic wave is emitted toward the targets of interest, and an array of receivers is placed on a closed or open measurement boundary located away from the scatterers. The scattered elastic wave fields recorded by these receivers are then used to determine the locations and geometric shapes of the unknown objects.
This class of inverse problems arises in numerous scientific and engineering applications, including seismic exploration in geophysics \cite{cheng2026feature}, nondestructive evaluation of engineering structures \cite{ ammari2018super}, and medical ultrasound elastography \cite{bal2015displacement}. 

We next present the mathematical formulation of the inverse medium problem for time-dependent elastic waves. Let $ D \subset \mathbb{R}^d$, $d=2,3$, denote the inhomogeneous scatterers with Lam\'e parameters $(\lambda_1,\mu_1)$ and density $\rho_1(\bm{x})$.
The background medium $ \mathbb{R}^d\setminus\overline{D} $ is characterized by the constants $(\lambda_2,\mu_2,\rho_2)$, and we assume that $\lambda_2/\lambda_1=\mu_2/\mu_1$.
Then the global parameters $(\lambda,\mu, \rho)$ admit the representation
\begin{equation}\label{eq:confi}
	\begin{split}
		\lambda(\bm{x}) &= \lambda_1\chi(D) + \lambda_2\chi(\mathbb{R}^d\backslash\overline{D}), \\
		\mu(\bm{x})&=\mu_1\chi(D) + \mu_2\chi(\mathbb{R}^d\backslash\overline{D}), \\
		\rho(\bm{x}) &= \rho_1(\bm x)\chi(D) + \rho_2\chi(\mathbb{R}^d\backslash\overline{D}). \\
	\end{split}
\end{equation}
Moreover, the Lam\'{e} constants in the two regions are assumed to satisfy the following strong convexity condition:
 \begin{equation*}
  \mathrm{i)}.~~\mu_i>0\qquad\mbox{and}\qquad \mathrm{ii)}.~~3\lambda_i+2\mu_i>0,
 \end{equation*}
with $i=1,2$.
Given a causal incident wave $\bm u^i$, namely $\bm u^i \equiv 0$ for $t\leq 0$, the propagation of the elastic scattered wave $\bm{u}^s(\bm{x},t)$ is governed by the following initial-value problem:
\begin{equation}\label{eq:system}
	\begin{aligned}
		&\mathcal{L}_{\lambda,\mu}\bm{u}^s(\bm{x},t)-\rho(\bm{x})\frac{\partial^2 \bm{u}^s(\bm{x},t)}{\partial t^2}=\left(\rho(\bm{x})-\frac{\lambda(\bm{x})}{\lambda_2} \rho_2 \right)\frac{\partial^2 \bm{u}^i(\bm{x},t)}{\partial t^2},\quad (\bm{x},t)\in \mathbb{R}^d\times\mathbb{R}_+,\\
		&\bm{u}^s(\bm{x},0)=\partial_t \bm{u}^s(\bm{x},0)=\bm{0},\quad \bm{x}\in \mathbb{R}^d.
	\end{aligned}
\end{equation}
In \eqref{eq:system}, the differential operator $\mathcal{L}_{\lambda,\mu}$, associated with the Lam\'{e} parameters $\lambda$ and $\mu$, is defined by
\begin{equation*}
\mathcal{L}_{\lambda,\mu}\bm{u}:=\mu\triangle \bm{u} +(\lambda+\mu)\nabla\nabla\cdot \bm{u}.
\end{equation*}
The inverse problem considered in this work is to determine the compact support of the inhomogeneous medium from measurements collected on an observation surface \(\Gamma \subset \mathbb{R}^d \setminus \overline{D}\), namely,
\begin{equation*}
	\Lambda := \left\{ \bm{u}(\bm{x},t) : (\bm{x},t) \in \Gamma \times \mathbb{R}_+ \right\}.
\end{equation*}

During the past few decades, inverse medium problems in elasticity have been studied extensively in the frequency domain. In particular, uniqueness and stability results have been established in the literature \cite{Hahner_1993, Elschner_2010, doi:10.1137/22M1508546}. Nevertheless, the numerical reconstruction of unknown scatterers remains highly challenging because of the inherent nonlinearity and ill-posedness of the inverse problem.
Existing numerical methods can be broadly classified into iterative and non-iterative approaches. Iterative methods reformulate the inverse problem as a PDE-constrained optimization problem, typically using shape optimization, level-set representations, or domain derivative techniques \cite{BAO2018263, Li_2016}. Although these approaches can yield high-resolution reconstructions, they require repeated solutions of the full forward elastic wave system, which leads to substantial computational costs, especially in higher dimensions. Moreover, their performance depends strongly on the quality of the initial guess.
To address these limitations, non-iterative methods have been developed as efficient alternatives. These approaches avoid repeated forward simulations and enable fast imaging. Among them, sampling-type methods provide direct characterizations of the scatterer support, including the linear sampling method \cite{Colton_2019}, the factorization method \cite{kirsch2007factorization}, the probe method \cite{potthast2006survey}, the direct sampling method \cite{ito2013direct}, and the recently developed monotonicity method \cite{harrach2013monotonicity, Eberle-Blick_2023}. These methods rely on operator-theoretic criteria to determine whether sampling points or regions lie inside the scatterer \cite{Hu_2013, Arens_2001}. It should be noted, however, that the aforementioned imaging approaches are restricted to single-frequency measurements.
Furthermore, in the context of elastic waves, an additional difficulty arises from the coexistence of compressional (P) and shear (S) waves, which are usually decoupled before mode-dependent imaging functionals are constructed \cite{Ji_2018, doi:10.1137/19M1237788}.

Compared with frequency-domain data, time-domain signals are often more naturally available and physically informative in practical scenarios \cite{ma2024imaging, jin2025iterative, klibanov2025convexification}. However, most existing time-domain methods have been developed for scalar acoustic or electromagnetic wave models governed by the Helmholtz or Maxwell equations \cite{sini2022inverse, lahivaara2022time}. Extending these methods to elastodynamics governed by the Navier equation is far from straightforward. The primary difficulty lies in the coexistence of P- and S-waves, which propagate at different speeds and are intrinsically coupled through the governing system. To address this challenge, one approach is to employ the Helmholtz decomposition together with retarded layer potentials to derive coupled boundary integral formulations in the Laplace domain \cite{Zhao_2022}. Nevertheless, this reconstruction procedure still relies on iterative solvers and requires explicit separation of wave modes, thereby introducing additional computational and modeling complexity in the time domain.
Consequently, considerable effort has been devoted to time-domain sampling-type approaches, leading to the development of methods such as the time-domain linear sampling method \cite{chen2010sampling, Guo_2013} and time-domain factorization techniques \cite{cakoni2019factorization, haddar2020time}. It is important to note that both classes of methods encounter theoretical challenges associated with transmission eigenvalue problems. Although the solvability of the time-domain linear sampling method has been rigorously established for acoustic waves \cite{cakoni2021analysis}, its extension to elastic waves remains open. To overcome these difficulties, the direct sampling method has recently been extended to the time domain \cite{doi:10.1137/23M1622854,doi:10.1137/24M1701071}. The main idea is to construct a space--time imaging functional involving delayed measurements and carefully designed test functions. The behavior of the corresponding indicator function is then characterized by exploiting properties of modified Bessel functions.
However, to the best of our knowledge, no direct sampling method has yet been proposed for the time-domain inverse scattering problem governed by the Navier equation. Moreover, the signals recorded by the sensors contain superposed P- and S-wave components. This naturally raises the important question of whether a non-decoupled imaging functional can be designed directly from the elastic wave field.

Motivated by this gap, we develop a new direct sampling framework for inverse elastic medium scattering problems in the time domain. Let $\widetilde{D}$ denote the sampling domain containing the scatterer, so that $\overline{D} \subset \widetilde{D}$.
We introduce the imaging functional
\begin{equation}\label{eq:time-indicator}
	\begin{aligned}
		\mathcal{I}(\bm{z})=&\int_{-\infty}^{\infty} \Big|\sum_{\tau=p,s}\int_{ \Gamma} \frac{ e^{-\sigma(t+c_{\tau}^{-1}|\bm{x}-\bm{z}|)}}{|\bm{x}-\bm{z}|^{\frac{d-1}{2}}} \bm{\Gamma}_{\tau}(\bm{x}-\bm{z})\bm{u}^s(\bm{x}, t+c_{\tau}^{-1}|\bm{x}-\bm{z}|) \,\mbox{d}s(\bm{x})\Big|^2\mbox{d}t,
	\end{aligned}
\end{equation}
for $\bm{z}\in \widetilde{D}$, where the constants $c_p=\sqrt{(\lambda_2+2\mu_2)/\rho_2}$ and $c_s=\sqrt{\mu_2/\rho_2}$ represent the phase velocities of the compressional and shear waves, respectively, and
\begin{equation}\label{eq:degaps}
	\bGa_p(\bm{x}) = -\hat{\bm{x}}\hat{\bm{x}}^\top, \quad \bGa_s(\bm{x}) = -\mathbf{I} + \hat{\bm{x}}\hat{\bm{x}}^\top. 
\end{equation}
It follows from \eqref{eq:time-indicator} that the proposed functional is a non-decoupled time-domain indicator. It incorporates contributions from both P- and S-waves while accounting for their respective propagation delays, and it constructs the image by applying the corresponding time-shift corrections directly to the measured scattered field.
To analyze the behavior of the imaging functional, we first establish the well-posedness of the forward time-domain elastic wave system. We then employ the Fourier--Laplace transform to derive an equivalent frequency-domain representation of the indicator function.
The imaging mechanism of the corresponding frequency-domain indicator is further investigated through a quasi-static analysis and properties of modified Bessel functions. Numerical experiments are finally presented to demonstrate the robustness and resolution capability of the proposed method.

The main contributions of this work can be summarized as follows. First, we propose a time-domain imaging functional for inverse elastic scattering, thereby extending the frequency-domain direct sampling methodology to the time domain. Second, the proposed method does not require decoupling of the P- and S-wave components, which substantially simplifies both the theoretical analysis and the practical implementation. Third, the reconstruction functional involves only space--time integrals over the measurement boundary and does not require iterative forward solves, making it particularly suitable for large-scale and real-time imaging problems in elastodynamics.

The remainder of this paper is organized as follows. Section 2 establishes the analytical framework through the Fourier--Laplace transform and studies the well-posedness of the associated system. Section 3 analyzes the behavior of the proposed imaging functional. In Section 4, we present numerical experiments that validate the theoretical results and demonstrate the effectiveness of the proposed method. A non-decoupled frequency-domain imaging functional for the elastic model is presented in the Appendix.

\section{Fourier--Laplace Transform and Forward Problem Analysis}

In this section, we recall the Fourier--Laplace transform and then discuss the well-posedness of system \eqref{eq:system}. We first introduce the notation for the relevant space--time Sobolev spaces and the Fourier--Laplace transform.
Let $X$ be a Hilbert space. We denote by $\mathcal{D}(\mathbb{R},X)$ the space of $X$-valued $C_0^\infty$ functions on $\mathbb{R}$ with compact support in $(-\infty,\infty)$, and by $\mathcal{D}'(\mathbb{R},X)$ the corresponding space of $X$-valued distributions. The Schwartz space of $X$-valued functions on the real line is denoted by $\mathcal{S}(\mathbb{R},X)$, and $\mathcal{S}'(\mathbb{R},X)$ denotes the corresponding space of tempered distributions. With this notation, we define

\begin{equation*}
\mathcal{L}'_\sigma(\mathbb{R},X):=\left\{f\in \mathcal{D}'(\mathbb{R},X):e^{-\sigma t} f\in \mathcal{S}'(\mathbb{R},X)\right\},\quad \sigma\in\mathbb{R}.
\end{equation*}
Let $\mathbb{C}_{\sigma_0} =\{\omega=\xi+\text{i}\sigma\in\mathbb{C}: \Im(\omega)\geq\sigma_0>0\}$ denote a half-plane in the complex plane.
To pass from the time domain to the frequency domain, we use the Fourier--Laplace transform of a function $f\in\mathcal{L}'_\sigma(\mathbb{R},X)$, defined by
\begin{equation*}
\mathcal{L}[f](\omega):=\int_{-\infty}^{+\infty}e^{\text{i} \omega t}f(t) \, \text{d}t,\quad \omega\in \mathbb{C}_{\sigma_0}.
\end{equation*}
For the Fourier--Laplace transform, the following Parseval identity holds:
\begin{equation}\label{eq:parseval}
\int_{-\infty}^{+\infty}|e^{-\sigma t}f(t)|^2\,\text{d}t= \frac{1}{2\pi} \int_{-\infty+\text{i}\sigma}^{+\infty+\text{i}\sigma} |\mathcal{L}[f](\omega)|^2 \, \text{d}\omega.
\end{equation}
To handle time-dependent wave fields, we introduce the following Hilbert space for $m\in\mathbb{R}$:
\begin{equation*}
H_\sigma^m(\mathbb{R},X):=\Big\{f\in\mathcal{L}'_\sigma(\mathbb{R},X) :\int_{-\infty+\text{i}\sigma}^{+\infty+\text{i}\sigma} |\omega|^{2m}\left\|\mathcal{L}[f](\omega)\right\|^2_X\text{d}\omega<+\infty\Big\},
\end{equation*}
equipped with the norm
\begin{equation*}
\left\|f\right\|_{H_\sigma^m(\mathbb{R},X)}= \Big(\int_{-\infty+\text{i}\sigma}^{+\infty+\text{i}\sigma} |\omega|^{2m}\left\|\mathcal{L}[f](\omega)\right\|^2_X\text{d}\omega\Big)^{1/2}.
\end{equation*}
The wave fields can therefore be transformed from the time domain to the frequency domain through the Fourier--Laplace transform, namely,
\begin{equation*}
\hat{\bm{u}}^s(\bm{x},\omega):=\int_{-\infty}^{+\infty} e^{\text{i}\omega t} \bm{u}^s(\bm{x},t)\, \text{d}t,\quad
\hat{\bm{u}}(\bm{x},\omega):=\int_{-\infty}^{+\infty} e^{\text{i}\omega t} \bm{u}(\bm{x},t) \, \text{d}t.
\end{equation*}
System \eqref{eq:system} can then be expressed in the frequency domain as
\begin{equation}\label{eq:frequency-domain}
    \mathcal{L}_{\lambda_2,\mu_2}\hat{\bm{u}}^s(\bm{x},\omega)+ \rho_2 \omega^2\hat{\bm{u}}^s(\bm{x},\omega)= \Big(\rho_2- \frac{\lambda_2}{\lambda(\bm{x})}\rho(\bm{x})\Big) \omega^2 \hat{\bm{u}}(\bm{x},\omega),\quad \bm{x}\in \mathbb{R}^d.
\end{equation}
By the Lippmann--Schwinger integral representation and a straightforward simplification, the solution to \eqref{eq:frequency-domain} can be written as
\begin{equation}\label{eq:usxomega}
\hat{\bm{u}}^s(\bm{x},\omega)=\int_{D}\bm{\Gamma}^\omega(\bm{x},\bm{y}) \left(\frac{\lambda_2}{\lambda_1}\rho_1(\bm{y})-\rho_2\right) \omega^2 \hat{\bm{u}}(\bm{y},\omega) \, \mbox{d}\bm{y}.
\end{equation}
In \eqref{eq:usxomega}, the function $\bGa^{\omega}=(\Gamma^{\omega}_{i,j})_{i,j=1}^d$ denotes the fundamental solution of the operator $\Lcal_{\lambda_2,\mu_2} + \rho_2\omega^2$ and is given by \cite{L0069}
\begin{equation}\label{eq:Gamma}
\bm{\Gamma}^{\omega}_{i,j}(\bm{x})=
    \left\{
    \begin{aligned}
   & -\frac{\delta_{ij}\mathrm{i}}{4\mu}H_0^{(1)}(\omega c_s^{-1}|\bm{x}|) + \frac{\mathrm{i}}{4 \omega^2\rho_2}\partial_{ij}^2\left(H_0^{(1)}(\omega c_p^{-1}|\bm{x}|)-H_0^{(1)}(\omega c_s^{-1}|\bm{x}|)\right) , \ d=2,\\
    &-\frac{\delta_{ij}}{4\pi\mu|\bm{x}|}e^{\rmi \omega c_s^{-1} |\bm{x}|} + \frac{1}{4\pi \omega^2\rho_2}\partial_{ij}^2 \frac{e^{\rmi \omega c_p^{-1}|\bm{x}|} - e^{\rmi \omega c_s^{-1}|\bm{x}|}}{|\bm{x}|}, \qquad \qquad \qquad  \quad \ d=3,
    \end{aligned}\right.
    \end{equation}
where $H_0^{(1)}$ is the Hankel function of the first kind of order zero. Furthermore, as $|\bm{x}|\rightarrow \infty$, the following asymptotic expansion holds:
\begin{equation*}
\bm{\Gamma}^{\omega}(\bm{x})=
    a_{d,1}\bm{\Gamma}_p(\bm{x} )\frac{e^{\rmi\omega c_p^{-1}|\bm{x} |}}{|\omega|^{\frac{3-d}{2}}|\bm{x} |^{\frac{d-1}{2}}} + a_{d,2}\bm{\Gamma}_s(\bm{x} )\frac{e^{\rmi\omega c_s^{-1}|\bm{x} |}}{\omega^{\frac{3-d}{2}}|\bm{x} |^{\frac{d-1}{2}}} +\mathcal{O}\big(\frac{1}{|\bm{x} |^{\frac{d+1}{2}}}\big),
\end{equation*}
where
\begin{equation}\label{eq:defa14}
a_{2,1}=\frac{1+\mathrm{i}}{4\sqrt{\pi(\lambda_2+2\mu_2)}},\quad a_{2,2}=\frac{1+\mathrm{i}}{4\sqrt{\pi\mu_2}},\quad
a_{3,1}=\frac{1}{4\pi(\lambda_2+2\mu_2)},\quad a_{3,2}=\frac{1}{4\pi\mu_2}.
\end{equation}

We next discuss the well-posedness of \eqref{eq:system}. To this end, we first recall the following lemma; further details can be found in \cite{Lubich_1994} and \cite[Chapter 3]{Sayas_2016}.

\begin{lem}\label{lem:regularity}
    Let $X$ and $Y$ be Banach spaces, and let $\mathcal{B}(X,Y)$ denote the space of all bounded linear operators from $X$ to $Y$. Suppose that $\psi:\mathbb{C}_{\sigma_0}\rightarrow \mathcal{B}(X,Y)$ is an analytic function satisfying the bound
    \begin{equation*}
    \|\psi(\omega)\|_{\mathcal{B}(X,Y)}\leq C|\omega|^k,\quad \omega\in \mathbb{C}_{\sigma_0},\quad k\in \mathbb{R},
    \end{equation*}
    where $C>0$ is a constant independent of $\omega$. Define the inverse transform
    \begin{equation*}
    \check{\psi}(t):=\frac{1}{2\pi}\int_{-\infty+\mathrm{i}\sigma}^{\infty+\mathrm{i}\sigma} e^{-\mathrm{i}\omega t}\psi(\omega) \,\mathrm{d}\omega, 
    \end{equation*}
    and the associated convolution operator
    \begin{equation*}
    \Psi g:=\int_{-\infty}^{\infty}\check{\psi}(\ell)g(t-\ell) \,\mathrm{d}\ell.
    \end{equation*}
    Then, for any $m\in\mathbb{R}$, the operator $\Psi$ extends to a bounded operator from $H_{\sigma}^{m+k}(\mathbb{R},X)$ into $H_{\sigma}^{m}(\mathbb{R},Y)$.
    \end{lem}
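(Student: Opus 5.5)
The plan is to show that the Fourier--Laplace transform turns the convolution operator $\Psi$ into multiplication by $\psi(\omega)$, and then to read off the bound from the norm on $H_\sigma^m$. This is the standard Lubich/Sayas argument.

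\emph{Step 1: the density class and the transform of $\check\psi$.} Fix $\sigma\ge\sigma_0$. I would work first with $g$ in a dense subclass of $H_\sigma^{m+k}(\mathbb{R},X)$, for instance $g\in\mathcal{D}(\mathbb{R},X)$. For such $g$ the transform $\mathcal{L}[g]$ is entire and decays faster than any power of $|\omega|$ along each line $\Im\omega=\sigma$. By the bound on $\psi$, the function $\check\psi$ is a well-defined $\mathcal{B}(X,Y)$-valued distribution: it is $e^{\sigma t}$ times the inverse Fourier transform of the tempered function $\xi\mapsto\psi(\xi+\mathrm{i}\sigma)$, which grows at most polynomially. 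Analyticity of $\psi$ on $\mathbb{C}_{\sigma_0}$ together with the polynomial bound lets one shift the contour by Cauchy's theorem. Hence $\check\psi$ does not depend on the choice of $\sigma\ge\sigma_0$, and $\mathcal{L}[\check\psi]=\psi$ on $\mathbb{C}_{\sigma_0}$. A Paley--Wiener argument, letting $\sigma\to\infty$, also shows that $\check\psi$ is causal.

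\emph{Step 2: convolution becomes multiplication.} For $g\in\mathcal{D}(\mathbb{R},X)$ I would verify
\[
\mathcal{L}[\Psi g](\omega)=\psi(\omega)\,\mathcal{L}[g](\omega),\qquad \Im\omega=\sigma .
\]
Multiplying by $e^{-\sigma t}$ reduces this to the ordinary convolution theorem for the tempered distribution $e^{-\sigma t}\check\psi$ and the Schwartz function $e^{-\sigma t}g$, since $e^{-\sigma t}(\check\psi*g)=(e^{-\sigma t}\check\psi)*(e^{-\sigma t}g)$. This is the step I expect to be the main obstacle. One must justify the operator-valued distributional convolution and its Fourier transform rigorously. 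For $k\ge0$ I would handle it by writing $\psi(\omega)=(\omega+\mathrm{i}\sigma_0)^{n}\phi(\omega)$ with $n>k+1$ an integer. Then $\|\phi(\omega)\|\le C'|\omega|^{-2}$ on $\mathbb{C}_{\sigma_0}$, so $\check\phi$ is a continuous, exponentially bounded function, and $\check\psi$ equals a derivative polynomial $(\mathrm{i}\partial_t+\mathrm{i}\sigma_0)^n$ applied to $\check\phi$. The identity for $\check\phi*g$ is then a Fubini computation, and the derivatives pass onto $g$.

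\emph{Step 3: the estimate and the extension.} Using the definition of the norm and the bound on $\psi$,
\[
\|\Psi g\|_{H^m_\sigma(\mathbb{R},Y)}^2=\int_{\Im\omega=\sigma}|\omega|^{2m}\|\psi(\omega)\mathcal{L}[g](\omega)\|_Y^2\,\mathrm{d}\omega\le C^2\int_{\Im\omega=\sigma}|\omega|^{2(m+k)}\|\mathcal{L}[g](\omega)\|_X^2\,\mathrm{d}\omega .
\]
This gives $\|\Psi g\|_{H^m_\sigma}\le C\|g\|_{H^{m+k}_\sigma}$. Note that $|\omega|\ge\sigma>0$ on the line, so the weights cause no trouble at the origin for negative exponents.

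It remains to check that $\mathcal{D}(\mathbb{R},X)$ is dense in $H^{m+k}_\sigma(\mathbb{R},X)$. Through the map $f\mapsto\mathcal{L}[f](\cdot+\mathrm{i}\sigma)$, this space is isometric to a weighted $L^2$ space with the weight $|\xi+\mathrm{i}\sigma|^{2(m+k)}$, which is bounded above and below by powers of $(1+|\xi|^2)$. It is therefore a standard Bessel-potential space, and compactly supported smooth functions are dense in it. The bounded operator then extends uniquely by continuity. The extension coincides with $\Psi$ wherever the convolution is classically defined, and it is again given by multiplication by $\psi$ on the Fourier--Laplace side.
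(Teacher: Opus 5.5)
Your proof is correct and is the standard Lubich/Sayas argument: convolution becomes multiplication by $\psi(\omega)$ on the line $\Im\omega=\sigma$, the $H_\sigma^m$ bound follows from $\|\psi(\omega)\|\le C|\omega|^k$ and the Fourier-side definition of the norm, and density gives the extension; the paper states the lemma without proof and relies on exactly this argument through \cite{Lubich_1994} and \cite[Chapter 3]{Sayas_2016}. Two small fixes: $n>k+1$ only gives $\|\phi(\omega)\|\le C|\omega|^{k-n}$ with $k-n<-1$, which is integrable on the line and is all you need (take $n\ge k+2$ if you want $|\omega|^{-2}$), and the same factorization with an integer $n\ge 0$, $n>k+1$, covers every real $k$, not only $k\ge 0$.
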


Using Lemma \ref{lem:regularity}, we establish the following result.
\begin{thm}\label{thm:regularity}
Suppose that the medium parameters are given by \eqref{eq:confi} and that the density $\rho_1(\bm{x})$ has a positive lower bound. Let $m\in\mathbb{R}$ and $\sigma\geq\sigma_0>0$. Then, for any incident field $\bm{u}^i(\bm{x},t)\in H_{\sigma}^{m}(\mathbb{R},(L^2(D))^d)$, system \eqref{eq:system} admits a unique scattered-field solution $\bm{u}^s(\bm{x},t)\in H_{\sigma}^{m-2}(\mathbb{R},(H^1(D))^d)$. Moreover, the solution $\bm{u}^s(\bm{x},t)$ satisfies the following regularity estimate:
    \begin{equation*}
    \|\bm{u}^s\|_{H_{\sigma}^{m-2}(\mathbb{R},(H^1(D))^d)}\leq C \|\bm{u}^i\|_{H_{\sigma}^{m}(\mathbb{R},(L^2(D))^d)},
    \end{equation*}
    where $C>0$ is a constant independent of $\bm{u}^i$.
    \end{thm}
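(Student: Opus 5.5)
The proof reduces, via Lemma \ref{lem:regularity}, to a frequency-domain resolvent estimate. We apply the Fourier--Laplace transform to \eqref{eq:system} and work with complex frequencies $\omega\in\mathbb{C}_{\sigma_0}$. For each such $\omega$ we solve the stationary transmission problem on $\mathbb{R}^d$ with a damped wave number:
\begin{equation*}
\mathcal{L}_{\lambda,\mu}\hat{\bm u}^s+\rho\omega^2\hat{\bm u}^s=\hat{\bm f},
\qquad
\hat{\bm f}:=-\Big(\rho-\frac{\lambda}{\lambda_2}\rho_2\Big)\omega^2\hat{\bm u}^i,
\end{equation*}
where the right-hand side is supported in $\overline D$. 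The aim is the solution operator $\psi(\omega):\hat{\bm u}^i\mapsto \hat{\bm u}^s|_D$, viewed as an analytic map $\mathbb{C}_{\sigma_0}\to\mathcal{B}((L^2(D))^d,(H^1(D))^d)$, with the bound $\|\psi(\omega)\|\le C|\omega|^2$. Lemma \ref{lem:regularity} with $k=2$ then gives boundedness from $H^m_\sigma(\mathbb{R},(L^2(D))^d)$ to $H^{m-2}_\sigma(\mathbb{R},(H^1(D))^d)$, which is exactly the claimed estimate. Uniqueness follows because the inverse transform of the unique frequency-domain solution is causal (Paley--Wiener), so it is the unique solution of \eqref{eq:system}.

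\textbf{Step 1 (variational formulation).} We seek $\hat{\bm u}^s\in (H^1(\mathbb{R}^d))^d$ satisfying
\begin{equation*}
a_\omega(\bm u,\bm v):=\int_{\mathbb{R}^d}\big(\lambda\,\nabla\!\cdot\bm u\,\overline{\nabla\!\cdot\bm v}+2\mu\,\varepsilon(\bm u):\overline{\varepsilon(\bm v)}-\rho\omega^2\bm u\cdot\overline{\bm v}\big)\,\mathrm{d}\bm x=-\int_D \hat{\bm f}\cdot\overline{\bm v}\,\mathrm{d}\bm x .
\end{equation*}
The piecewise constant Lamé parameters enter in divergence form, and the rearrangement used in \eqref{eq:frequency-domain} relies on the assumption $\lambda_2/\lambda_1=\mu_2/\mu_1$. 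Where convenient, we first divide the equation by $\lambda/\lambda_2$ to reach the constant-coefficient operator, which keeps the form symmetric.

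\textbf{Step 2 (coercivity).} This is the standard trick for damped problems. Testing with $\bm v=\bar\omega\,\bm u$ and taking real parts uses $\Im\omega=\sigma\ge\sigma_0$. Writing $\omega=\xi+\mathrm{i}\sigma$, we have $-\omega^2\bar\omega=-|\omega|^2\omega$, so
\begin{equation*}
\Re\big(a_\omega(\bm u,\bar\omega\bm u)\big)\cdot\frac{1}{\sigma}\;\ge\;\int\big(\lambda|\nabla\!\cdot\bm u|^2+2\mu|\varepsilon(\bm u)|^2\big)+\int\rho|\omega|^2|\bm u|^2 .
\end{equation*}
The signs need care because the paper's convention is $e^{\mathrm{i}\omega t}$, and we check this sign first. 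With $\mu_i>0$, $3\lambda_i+2\mu_i>0$, Korn's inequality on $\mathbb{R}^d$, and the positive lower bound on $\rho_1$, the right-hand side dominates
\begin{equation*}
c\min(1,\sigma_0^2)\,\|\bm u\|^2_{H^1}
\end{equation*}
up to an $|\omega|$-weighted norm. More precisely, we obtain the estimate
\begin{equation*}
\sigma\big(\|\nabla\bm u\|^2+|\omega|^2\|\bm u\|^2\big)\le C|\omega|\,\|\hat{\bm f}\|_{L^2(D)}\|\bm u\|_{L^2}.
\end{equation*}

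\textbf{Step 3 (bounds).} Lax--Milgram then yields existence and the bound $|\omega|\,\|\bm u\|_{L^2}\le C\sigma^{-1}\|\hat{\bm f}\|$. Substituting back gives
\begin{equation*}
\|\nabla\bm u\|^2\le C\sigma^{-2}\|\hat{\bm f}\|^2 .
\end{equation*}
Since $\|\hat{\bm f}\|\le C|\omega|^2\|\hat{\bm u}^i\|$, this gives $\|\hat{\bm u}^s\|_{H^1(D)}\le C|\omega|^2\|\hat{\bm u}^i\|_{L^2(D)}$. In fact it is slightly better, since the $L^2$ part carries only a factor $|\omega|$, but $k=2$ suffices. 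Analyticity in $\omega$ follows because the form depends polynomially on $\omega$ and is uniformly invertible on $\mathbb{C}_{\sigma_0}$.

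\textbf{Main obstacle.} The delicate step is Step 2: obtaining coercivity uniformly in $\omega$ with constants depending only on $\sigma_0$, handling the discontinuous coefficients and the possibly negative $\lambda$ via Korn's inequality, and getting the correct sign of the damping term. If the rescaling by $\lambda/\lambda_2$ makes the density factor awkward, we will instead keep the original divergence-form operator, for which the energy identity is cleanest.
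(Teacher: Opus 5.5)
Your strategy is the paper's: frequency-domain energy estimate, Lax--Milgram, the bound $\|\hat{\bm u}^s\|_{(H^1(D))^d}\le C|\omega|^2\|\hat{\bm u}^i\|_{(L^2(D))^d}$, then Lemma~\ref{lem:regularity} with $k=2$. However, the multiplier in Step~2 is wrong, and Step~2 fails as written. Since $a_\omega$ is antilinear in its second argument, $a_\omega(\bu,\bar\omega\bu)=\omega\,a_\omega(\bu,\bu)$. Even the intended $\bar\omega\,a_\omega(\bu,\bu)$ has real part $\xi\big(E(\bu)-|\omega|^2\int\rho|\bu|^2\big)$, where $\xi=\Re\omega$ and $E(\bu)\ge 0$ is the elastic energy. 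So neither $\sigma$ nor a definite sign appears.

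The missing factor is $\rmi$. Test with $\bm v=-\rmi\omega\bu$, which is the transform of $\partial_t\bu$ under the $e^{\rmi\omega t}$ convention; equivalently, use the multiplier $\rmi\bar\omega$. Because $\Re(\rmi\bar\omega)=\sigma$ and $\Re(-\rmi\bar\omega\omega^2)=\Re(-\rmi|\omega|^2\omega)=\sigma|\omega|^2$, you get $\Re\big(\rmi\bar\omega\,a_\omega(\bu,\bu)\big)=\sigma\big(E(\bu)+|\omega|^2\int\rho|\bu|^2\big)$. This is exactly the paper's $\Re(\rmi\bar\omega A(\bu,\bu))$. The source term is still bounded by $|\omega|\,\|\hat{\bm f}\|_{L^2(D)}\|\bu\|_{L^2}$, so your final inequality in Step~2 and all of Step~3 then hold.

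With that repair your argument is sound, and in two respects more careful than the paper's.

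First, the paper poses the variational problem only on $D$ with test functions in $C_0^\infty(D)$, leaving $\partial D$ and the exterior unaddressed. You instead solve the whole-space problem in $(H^1(\mathbb{R}^d))^d$, which is legitimate since $\Im\omega\ge\sigma_0>0$ makes a radiation condition unnecessary, and then restrict to $D$. This really produces the scattered field of \eqref{eq:system} and matches \eqref{eq:usxomega}. Your remarks on analyticity (a hypothesis of Lemma~\ref{lem:regularity}) and causality also fill in points the paper leaves implicit.

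Second, for the source term the paper uses $\Re(\rmi\bar\omega X)\le\Im(\omega)|X|$, but in general only $|\omega||X|$ is available. Combined with the paper's lower bound $\min\{\mu_1,\rho_{\mathrm{min}}|\omega|^2\}$, that would give growth like $|\omega|^3$. Keeping $\sigma|\omega|^2\|\bu\|_{L^2}^2$ separate from the gradient term, as you do in Step~3, is what actually yields $k=2$.

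Finally, commit to the rescaled formulation. Dividing by $\lambda/\lambda_2=\mu/\mu_2>0$ gives the constant-coefficient operator $\mathcal{L}_{\lambda_2,\mu_2}$ of \eqref{eq:frequency-domain}, with effective density $\rho\lambda_2/\lambda$ bounded below. Since $\lambda_2+\mu_2>\mu_2/3>0$, the form $\mu_2|\nabla\bu|^2+(\lambda_2+\mu_2)|\nabla\cdot\bu|^2$ is coercive without Korn's inequality. The divergence-form Lam\'e operator with piecewise coefficients imposes different transmission conditions on $\partial D$ and is not the problem encoded by \eqref{eq:frequency-domain}.
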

    \begin{proof}
    For $\bx\in D$, equation \eqref{eq:frequency-domain} can be written as
    \begin{equation*}
    \mathcal{L}_{\lambda_1,\mu_1}\hat{\bm{u}}^s(\bm{x},\omega)+ \rho_1(\bm{x}) \omega^2\hat{\bm{u}}^s(\bm{x},\omega)= \left(\frac{\lambda_1}{\lambda_2}\rho_2-\rho_1(\bm{x})\right)\omega^2 \hat{\bm{u}}^i(\bm{x},\omega).
    \end{equation*}
    Multiplying the equation by a smooth test function $\bm{v}\in(C_0^\infty(D))^d$ and integrating over $D$ yield
    \begin{equation*}
    \int_D \left(\mathcal{L}_{\lambda_1,\mu_1}\hat{\bm{u}}^s(\bm{x},\omega)+ \rho_1(\bm{x}) \omega^2\hat{\bm{u}}^s(\bm{x},\omega)\right) \overline{\bm{v}}(\bm{x})\, \mathrm{d}\bm{x} = \int_D \left(\frac{\lambda_1}{\lambda_2}\rho_2-\rho_1(\bm{x})\right)\omega^2 \hat{\bm{u}}^i(\bm{x},\omega) \overline{\bm{v}}(\bm{x}) \,\mathrm{d}\bm{x}.
    \end{equation*}
    Applying integration by parts, we obtain
    \begin{equation}\label{eq:variationA}
    A(\hat{\bm{u}}^s,\bm{v})= \int_D \left(\frac{\lambda_1}{\lambda_2}\rho_2-\rho_1(\bm{x})\right)\omega^2 \hat{\bm{u}}^i(\bm{x},\omega) \overline{\bm{v}}(\bm{x})\, \mathrm{d}\bm{x},
    \end{equation}
    where the sesquilinear form $A$ is defined by
    \begin{equation}\label{eq:Auv}
    A(\bm{u},\bm{v}):=\int_D \mu_1 \nabla \bm{u}:\nabla \overline{\bm{v}} + (\lambda_1+\mu_1) (\nabla\cdot \bm{u})(\nabla\cdot \overline{\bm{v}}) - \rho_1\omega^2 \bm{u}\cdot\overline{\bm{v}} \, \mathrm{d}\bm{x},
    \end{equation}
    with the notation
    \begin{equation*}
    \nabla \bm{u}:\nabla \overline{\bm{v}}:=\sum_{i,j} \frac{\partial u_i}{\partial x_j}\frac{\partial \overline{v}_i}{\partial x_j}.
    \end{equation*}
    Since the density $\rho_1(\bm{x})$ is bounded, i.e., $0<\rho_{\mathrm{min}} \leq \rho_1(\bm{x})\leq\rho_{\mathrm{max}}$, setting $\bm{v}=\bm{u}$ in \eqref{eq:Auv} yields
    \begin{equation*}
    \begin{aligned}
    \Re \left(\mathrm{i} \overline{\omega} A (\bm{u},\bm{u})\right)=&\Re \left( \int_D \mathrm{i}\mu_1 \overline{\omega}\nabla \bm{u}:\nabla \overline{\bm{u}} + (\lambda_1+\mu_1) (\nabla\cdot \bm{u})(\nabla\cdot \overline{\bm{u}}) - \mathrm{i}\rho_1\omega|\omega|^2 \bm{u}\cdot \overline{\bm{u}}\,\mathrm{d}\bm{x}\right)\\
    =&\Im(\omega) \int_D \mu_1 \nabla \bm{u}:\nabla \overline{\bm{u}} + (\lambda_1+\mu_1) (\nabla\cdot \bm{u})(\nabla\cdot \overline{\bm{u}}) + \rho_1|\omega|^2 \bm{u}\cdot \overline{\bm{u}}\,\mathrm{d}\bm{x}\\
    \geq &\Im(\omega) \min\left\{\mu_1,\rho_{\mathrm{min}}|\omega|^2\right\} \|\bm{u}\|^2_{(H^1(D))^d}.
    \end{aligned}
    \end{equation*}
    On the other hand, the Cauchy--Schwarz inequality gives the continuity estimate
    \begin{equation*}
    \Re \left(\mathrm{i} \overline{\omega} A (\bm{u},\bm{v})\right)\leq \Im(\omega) \max\left\{\lambda_1+2\mu_1,\rho_{\mathrm{max}} |\omega|^2\right\} \|\bm{u}\|_{(H^1(D))^d} \|\bm{v}\|_{(H^1(D))^d}.
    \end{equation*}
    Therefore, the sesquilinear form $A(\cdot,\cdot)$ is both coercive and continuous. By the Lax--Milgram theorem, there exists a unique solution $\hat{\bm{u}}^s$ satisfying \eqref{eq:variationA}, and consequently \eqref{eq:frequency-domain}. Furthermore, we observe that
    \begin{equation*}
    \begin{aligned}
    &\Re \left(\mathrm{i} \overline{\omega} \int_D \left(\frac{\lambda_1}{\lambda_2}\rho_2-\rho_1(\bm{x})\right)\omega^2 \hat{\bm{u}}^i(\bm{x},\omega) \overline{\bm{v}}(\bm{x})\, \mathrm{d}\bm{x}\right)\\
   & \quad  \leq\Im(\omega)|\omega|^2 \left\|\frac{\lambda_1}{\lambda_2}\rho_2-\rho_1(\bm{x})\right\|_{C(D)} \|\hat{\bm{u}}^i\|_{(L^2(D))^d} \|\bm{v}\|_{(L^2(D))^d}.
    \end{aligned}
    \end{equation*}
    Taking $\bm{v}=\bm{u}^s$ in \eqref{eq:variationA}, we obtain
    \begin{equation*}
    \begin{aligned}
    \Im(\omega) \min&\left\{\mu_1,\rho_{\mathrm{min}}|\omega|^2\right\} \|\hat{\bm{u}}^s\|^2_{(H^1(D))^d}\leq \Re \left(\mathrm{i} \overline{\omega} A (\hat{\bm{u}}^s, \hat{\bm{u}}^s)\right)\\
    &=\Re \left(\mathrm{i} \overline{\omega} \int_D \left(\frac{\lambda_1}{\lambda_2}\rho_2-\rho_1(\bm{x})\right)\omega^2 \hat{\bm{u}}^i(\bm{x},\omega) \overline{\hat{\bm{u}}^s}(\bm{x},\omega) \,  \mathrm{d}\bm{x}\right)\\
    &\leq \Im(\omega)|\omega|^2 \left\|\frac{\lambda_1}{\lambda_2}\rho_2-\rho_1(\bm{x})\right\|_{C(D)} \|\hat{\bm{u}}^i\|_{(L^2(D))^d} \|\hat{\bm{u}}^s\|_{(L^2(D))^d}.
    \end{aligned}
    \end{equation*}
    Consequently, for $\sigma\geq\sigma_0>0$, we obtain the following frequency-domain estimate:
    \begin{equation*}
    \begin{aligned}
    \|\hat{\bm{u}}^s\|_{(H^1(D))^d}\leq& \max \left\{\frac{|\omega|^2}{\mu_1},\frac{1}{\rho_{\mathrm{min}}}\right\} \left\|\frac{\lambda_1}{\lambda_2}\rho_2-\rho_1(\bm{x})\right\|_{C(D)} \|\hat{\bm{u}}^i\|_{(L^2(D))^d}\\
    \leq &\max \left\{\frac{|\omega|^2}{\rho_{\mathrm{min}}\sigma_0^2} , \frac{|\omega|^2}{\mu_1},\frac{1}{\rho_{\mathrm{min}}}\right\} \left\|\frac{\lambda_1}{\lambda_2}\rho_2-\rho_1(\bm{x})\right\|_{C(D)} \|\hat{\bm{u}}^i\|_{(L^2(D))^d}\\
    =& |\omega|^2 \max\left\{\frac{1}{\rho_{\mathrm{min}}\sigma_0^2}, \frac{1}{\mu_1}\right\} \left\|\frac{\lambda_1}{\lambda_2}\rho_2-\rho_1(\bm{x})\right\|_{C(D)} \|\hat{\bm{u}}^i\|_{(L^2(D))^d}.
    \end{aligned}
    \end{equation*}
    
    Define the frequency-domain solution operator $\hat{K}(\omega)$ associated with \eqref{eq:frequency-domain} by
    \begin{equation*}
    \hat{K}(\omega):\hat{\bm{u}}^i\in (L^2(D))^d \mapsto \hat{\bm{u}}^s\in (H^1(D))^d,
    \end{equation*}
    and define the corresponding time-domain solution operator $\mathcal{K}$ for \eqref{eq:system} by
    \begin{equation*}
    \mathcal{K}:\bm{u}^i\in  H_{\sigma}^{m}(\mathbb{R},(L^2(D))^d) \mapsto \bm{u}^s \in  H_{\sigma}^{m-2}(\mathbb{R},(H^1(D))^d).
    \end{equation*}
    The operator $\mathcal{K}$ can be expressed through convolution as
    \begin{equation*}
    \mathcal{K}\bm{u}^i=\bm{u}^s= \mathcal{L}^{-1}\left[\hat{\bm{u}}^s\right]= \mathcal{L}^{-1}\left[\hat{K} \hat{\bm{u}}^i\right]= \mathcal{L}^{-1}\left[\mathcal{L}[K]\mathcal{L}[\bm{u}^i] \right]= K*\bm{u}^i.
    \end{equation*}
    Finally, applying Lemma \ref{lem:regularity} with $k=2$ yields the time-domain regularity estimate
    \begin{equation*}
    \|\bm{u}^s\|_{H_{\sigma}^{m}(\mathbb{R},(H^1(D))^d)} \leq C \|\bm{u}^i\|_{H_{\sigma}^{m-2}(\mathbb{R},(L^2(D))^d)}.
    \end{equation*}
    \end{proof}

\section{Analysis of the Time-Domain Direct Sampling Method}
In this section, we analyze the asymptotic behavior of the proposed imaging functional \eqref{eq:time-indicator} and clarify its mechanism for identifying the locations of the scatterers. To this end, we employ the Fourier--Laplace transform and conduct the theoretical analysis in the frequency domain.

\begin{thm}
    Let the time-domain imaging functional be defined by \eqref{eq:time-indicator}. Applying the Fourier--Laplace transform, the imaging functional admits the following frequency-domain representation:
    \begin{equation*}
    \mathcal{I}(\bm{z})=\frac{1}{2\pi}\int_{-\infty+\rmi\sigma}^{\infty+\rmi\sigma}\Big| \int_{\Gamma} \sum_{\tau=p,s} \bm{\Gamma}_\tau(\bm{x}-\bm{z}) \frac{e^{-\rmi\Re (\omega) c_\tau^{-1}|\bm{x}-\bm{z}|}}{|\bm{x}-\bm{z}|^{\frac{d-1}{2}}} \hat{\bm{u}}^s(\bm{x},\omega) \, \mathrm{d}s(\bm{x})\Big|^2\mathrm{d}\omega,\quad \bm{z}\in \widetilde{D}.
    \end{equation*}
    \end{thm}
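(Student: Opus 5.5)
The plan is to recognize $\mathcal{I}(\bm{z})$ as the left-hand side of the Parseval identity \eqref{eq:parseval} for a suitable auxiliary vector-valued function of time. Then we compute the Fourier--Laplace transform of that function by the shift rule. Throughout, write $T_\tau(\bm{x}):=c_\tau^{-1}|\bm{x}-\bm{z}|$ for $\tau=p,s$. For $\bm{x}\in\Gamma$ and $\bm{z}\in\widetilde D$, $T_\tau(\bm{x})$ is bounded and $|\bm{x}-\bm{z}|$ is bounded away from zero, since $\overline{\widetilde D}\cap\Gamma=\emptyset$.

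First I split the exponential weight as $e^{-\sigma(t+T_\tau(\bm{x}))}=e^{-\sigma t}\,e^{-\sigma T_\tau(\bm{x})}$. The factor $e^{-\sigma t}$ does not depend on $\tau$ or $\bm{x}$, so it can be pulled out of the sum and the surface integral. Hence $\mathcal{I}(\bm{z})=\int_{\mathbb{R}}|e^{-\sigma t}\bm{g}(t)|^2\,\mathrm{d}t$, where
\begin{equation*}
\bm{g}(t):=\sum_{\tau=p,s}\int_\Gamma \frac{e^{-\sigma T_\tau(\bm{x})}}{|\bm{x}-\bm{z}|^{\frac{d-1}{2}}}\,\bm{\Gamma}_\tau(\bm{x}-\bm{z})\,\bm{u}^s(\bm{x},t+T_\tau(\bm{x}))\,\mathrm{d}s(\bm{x}).
\end{equation*}
Parseval \eqref{eq:parseval}, applied componentwise to the $\mathbb{C}^d$-valued function $\bm{g}$ and summed over components, then gives
\begin{equation*}
\mathcal{I}(\bm{z})=\frac{1}{2\pi}\int_{-\infty+\rmi\sigma}^{\infty+\rmi\sigma}|\mathcal{L}[\bm{g}](\omega)|^2\,\mathrm{d}\omega .
\end{equation*}

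Next I compute $\mathcal{L}[\bm{g}]$. After interchanging the time integral with the sum and the integral over $\Gamma$, the substitution $t'=t+T_\tau(\bm{x})$ gives the shift rule
\begin{equation*}
\int_{\mathbb{R}}e^{\rmi\omega t}\bm{u}^s(\bm{x},t+T_\tau(\bm{x}))\,\mathrm{d}t=e^{-\rmi\omega T_\tau(\bm{x})}\hat{\bm{u}}^s(\bm{x},\omega).
\end{equation*}
With $\omega=\Re(\omega)+\rmi\sigma$ we have $e^{-\rmi\omega T_\tau}=e^{-\rmi\Re(\omega)T_\tau}e^{\sigma T_\tau}$. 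The factor $e^{\sigma T_\tau}$ cancels exactly the weight $e^{-\sigma T_\tau}$, so
\begin{equation*}
\mathcal{L}[\bm{g}](\omega)=\int_\Gamma\sum_{\tau=p,s}\bm{\Gamma}_\tau(\bm{x}-\bm{z})\frac{e^{-\rmi\Re(\omega)c_\tau^{-1}|\bm{x}-\bm{z}|}}{|\bm{x}-\bm{z}|^{\frac{d-1}{2}}}\hat{\bm{u}}^s(\bm{x},\omega)\,\mathrm{d}s(\bm{x}).
\end{equation*}
Inserting this into the Parseval expression gives the claimed formula. This cancellation is the reason the $\tau$-dependent factor $e^{-\sigma c_\tau^{-1}|\bm{x}-\bm{z}|}$ appears in \eqref{eq:time-indicator}.

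The main obstacle is justifying the interchange of the Fourier--Laplace transform with the integral over $\Gamma$, and making sure that $\bm{g}\in\mathcal{L}'_\sigma(\mathbb{R},\mathbb{C}^d)$ with $e^{-\sigma t}\bm{g}\in L^2$. I would handle this as follows.
\begin{itemize}
\item By Theorem \ref{thm:regularity}, and by the representation \eqref{eq:usxomega}, whose kernel $\bm{\Gamma}^\omega(\bm{x},\bm{y})$ is smooth for $\bm{x}\in\Gamma$, $\bm{y}\in D$, the restriction of $\bm{u}^s$ to $\Gamma$ lies in $H^{m'}_\sigma(\mathbb{R},(L^2(\Gamma))^d)$ for some $m'$. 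For incident fields of sufficient regularity we may take $m'\ge 0$.
\item Since $\Gamma$ is compact and $T_\tau$ is bounded and continuous on $\Gamma$, the maps $t\mapsto e^{-\sigma(t+T_\tau)}\bm{u}^s(\cdot,t+T_\tau)$ are in $L^2(\mathbb{R}\times\Gamma)$. Fubini's theorem then applies to the absolutely convergent integrals along the line $\Im\omega=\sigma$.
\end{itemize}
For less regular data I would argue by density: prove the identity for $\bm{u}^s|_\Gamma$ smooth and compactly supported in time, where every interchange is trivial, and then pass to the limit using the continuity of both sides in the $H^0_\sigma$ norm.
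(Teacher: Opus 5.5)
Your proof is correct and follows the paper's argument: you factor out $e^{-\sigma t}$, apply Parseval \eqref{eq:parseval}, and use the time-shift rule so that $e^{\sigma c_\tau^{-1}|\bm{x}-\bm{z}|}$ cancels the weight $e^{-\sigma c_\tau^{-1}|\bm{x}-\bm{z}|}$. Your justification of the interchanges goes beyond the paper, which omits it. One small correction there: $L^2$-membership does not make the time integrals absolutely convergent, so Fubini alone does not justify swapping $\int_{\mathbb{R}}e^{\rmi\omega t}\,\cdot\,\mathrm{d}t$ with $\int_\Gamma$; it is your density (or bounded-operator) argument that actually carries that step.
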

    \begin{proof}
    Applying Parseval's identity \eqref{eq:parseval} and performing a direct calculation, we obtain
    \begin{align*}
     \mathcal{I}(\bm{z})=&\int_{-\infty}^{\infty} \Big|\int_{\Gamma} \sum_{\tau=p,s} \frac{  e^{-\sigma(t+c_\tau^{-1}|\bm{x}-\bm{z}|)}}{|\bm{x}-\bm{z}|^{\frac{d-1}{2}}} \bm{\Gamma}_\tau(\bm{x}-\bm{z})\bm{u}^s(\bm{x}, t+c_\tau^{-1}|\bm{x}-\bm{z}|)\,\mbox{d}s(\bm{x})\Big|^2\mbox{d}t\\
    =&\frac{1}{2\pi}\int_{-\infty+\text{i}\sigma}^{\infty+\text{i}\sigma}\Big| \int_{\Gamma} \sum_{\tau=p,s} \frac{e^{-\sigma c_\tau^{-1}|\bm{x}-\bm{z}|}}{|\bm{x}-\bm{z}|^{\frac{d-1}{2}}} \bm{\Gamma}_\tau(\bm{x}-\bm{z})\mathcal{L}[\bm{u}^s(\bm{x}, t+c_\tau^{-1}|\bm{x}-\bm{z}|)](\omega)\,\mbox{d}s(\bm{x})\Big|^2\mbox{d}\omega\\
    =&\frac{1}{2\pi}\int_{-\infty+\text{i}\sigma}^{\infty+\text{i}\sigma}\Big| \int_{\Gamma}\sum_{\tau=p,s}\bm{\Gamma}_\tau(\bm{x}-\bm{z}) \frac{e^{-\text{i}\Re (\omega) c_\tau^{-1}|\bm{x}-\bm{z}|}}{|\bm{x}-\bm{z}|^{\frac{d-1}{2}}}  \hat{\bm{u}}^s(\bm{x},\omega) \, \mbox{d}s(\bm{x})\Big|^2\mbox{d}\omega.
    \end{align*}
    
    This completes the proof.
    \end{proof}
    
    Before investigating the asymptotic behavior of the proposed imaging functional, we introduce several preliminary lemmas.
    
    \begin{lem}\label{lem:gamma_ps}
        The functions $\bGa_p$ and $\bGa_s$ defined in \eqref{eq:degaps} satisfy the following properties:
        \[
       \bGa_p(\bm{x})\bGa_p(\bm{x})=-\bGa_p(\bm{x}), \quad \bGa_s(\bm{x})\bGa_s(\bm{x})=-\bGa_s(\bm{x}), \quad \bGa_p(\bm{x})\bGa_s(\bm{x})=\bGa_s(\bm{x})\bGa_p(\bm{x})=0.
        \]
       \end{lem}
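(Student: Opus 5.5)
The identities follow from elementary projection algebra once $\bGa_p$ and $\bGa_s$ are written in terms of the orthogonal projector onto the direction $\hat{\bm{x}}$. Fix $\bm{x}\neq \bm{0}$, so that $\hat{\bm{x}}=\bm{x}/|\bm{x}|$ is a unit vector with $\hat{\bm{x}}^\top\hat{\bm{x}}=1$, and set $\mathbf{P}:=\hat{\bm{x}}\hat{\bm{x}}^\top$. Then by \eqref{eq:degaps} we have $\bGa_p(\bm{x})=-\mathbf{P}$ and $\bGa_s(\bm{x})=-(\mathbf{I}-\mathbf{P})$. The whole argument rests on two facts: $\mathbf{P}$ is idempotent, and so is the complementary projector $\mathbf{I}-\mathbf{P}$, with $\mathbf{P}(\mathbf{I}-\mathbf{P})=(\mathbf{I}-\mathbf{P})\mathbf{P}=0$.

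First I check idempotency by associativity of matrix multiplication:
\begin{equation*}
\mathbf{P}^2=\hat{\bm{x}}(\hat{\bm{x}}^\top\hat{\bm{x}})\hat{\bm{x}}^\top=\hat{\bm{x}}\hat{\bm{x}}^\top=\mathbf{P}.
\end{equation*}
Consequently $(\mathbf{I}-\mathbf{P})^2=\mathbf{I}-2\mathbf{P}+\mathbf{P}^2=\mathbf{I}-\mathbf{P}$, and $\mathbf{P}(\mathbf{I}-\mathbf{P})=\mathbf{P}-\mathbf{P}^2=0$. By the same computation, $(\mathbf{I}-\mathbf{P})\mathbf{P}=0$.

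Then I insert the minus signs. For the compressional part, $\bGa_p\bGa_p=(-\mathbf{P})(-\mathbf{P})=\mathbf{P}^2=\mathbf{P}=-\bGa_p$. For the shear part, $\bGa_s\bGa_s=(\mathbf{I}-\mathbf{P})^2=\mathbf{I}-\mathbf{P}=-\bGa_s$. For the mixed products, $\bGa_p\bGa_s=\mathbf{P}(\mathbf{I}-\mathbf{P})=0$ and $\bGa_s\bGa_p=(\mathbf{I}-\mathbf{P})\mathbf{P}=0$.

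There is no real obstacle here. The only point needing care is sign bookkeeping: each product of two of the matrices carries a factor $(-1)^2=1$, which is why the squares equal minus the original matrices. The identities hold for every $\bm{x}\neq\bm{0}$, and in both $d=2$ and $d=3$, since only $|\hat{\bm{x}}|=1$ is used.
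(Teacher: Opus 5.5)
Your proof is correct and carries out exactly the direct calculation from the definitions that the paper invokes and then omits. Writing $\bGa_p=-\mathbf{P}$ and $\bGa_s=-(\mathbf{I}-\mathbf{P})$ with $\mathbf{P}=\hat{\bm{x}}\hat{\bm{x}}^\top$, and using $\mathbf{P}^2=\mathbf{P}$ (from $|\hat{\bm{x}}|=1$, for $\bm{x}\neq\bm{0}$), yields all three identities in both $d=2$ and $d=3$.
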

       \begin{proof}
        The proof follows from a direct calculation using the definitions of $\bGa_p(\bm{x})$ and $\bGa_s(\bm{x})$ in \eqref{eq:degaps}, and is therefore omitted.
       \end{proof}

       \begin{lem}\label{lem:integral}
        For $\bm{\alpha}\in \mathbb{C}^3$, the following identities hold:
        \begin{equation*}
    \begin{aligned}
    &\int_{ \mathbb{S}^2} \mathbf{I} e^{\hat{\bx}\cdot\bm{\alpha}} \,{\rm d}s(\hat{\bx}) = 4\pi i_0( \sqrt{\bm{\alpha}\cdot\bm{\alpha}})\mathbf{I},\\
    &\int_{ \mathbb{S}^2} \hat{\bm{x}}\hat{\bm{x}}^\top e^{\hat{\bx}\cdot\bm{\alpha}} \,{\rm d}s(\hat{\bm{x}}) = \frac{4\pi}{3}(i_0(\sqrt{\bm{\alpha}\cdot\bm{\alpha}})- i_2(\sqrt{\bm{\alpha}\cdot\bm{\alpha}}))\mathbf{I} +4\pi i_2( \sqrt{\bm{\alpha}\cdot\bm{\alpha}})\frac{\bm{\alpha}\bm{\alpha}^\top } {\bm{\alpha}\cdot\bm{\alpha}}.
    \end{aligned}
    \end{equation*}
        Furthermore, in the two-dimensional case, for $\bm{\alpha}\in \mathbb{C}^2$, one has
        \begin{equation*}
        \begin{aligned}
        &\int_{ \mathbb{S}^1} \mathbf{I} e^{\hat{\bx}\cdot\bm{\alpha}} \,{\rm d}s(\hat{\bx})= 2\pi I_0( \sqrt{\bm{\alpha}\cdot\bm{\alpha}})\mathbf{I},\\
        &\int_{ \mathbb{S}^1}\hat{\bm{x}}\hat{\bm{x}}^\top  e^{\hat{\bx}\cdot\bm{\alpha}} \,{\rm d}s(\hat{\bx})= \pi(I_0(\sqrt{\bm{\alpha}\cdot\bm{\alpha}})- I_2(\sqrt{\bm{\alpha}\cdot\bm{\alpha}}))\mathbf{I} +2\pi I_2( \sqrt{\bm{\alpha}\cdot\bm{\alpha}})\frac{\bm{\alpha}\bm{\alpha}^\top } {\bm{\alpha}\cdot\bm{\alpha}}.
        \end{aligned}
        \end{equation*}
        Here, $I_n$ and $i_n$ denote, respectively, the modified Bessel function and the modified spherical Bessel function of the first kind of order $n$, and $\mathbb{S}^{d-1}$ denotes the unit sphere in $\mathbb{R}^d$ for $d=2,3$.
        \end{lem}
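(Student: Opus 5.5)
The plan is to treat the integrals for real vectors $\bm{\alpha}$ first, where rotational symmetry settles everything. Both sides are then extended to complex $\bm{\alpha}$ by analytic continuation.

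\emph{Real case, scalar identity.} For $\bm{\alpha}\in\mathbb{R}^d\setminus\{0\}$, write $r=|\bm{\alpha}|$ and rotate coordinates so that $\bm{\alpha}=r\bm{e}_d$. In $d=3$, spherical coordinates give
\[
\int_{\mathbb{S}^2}e^{r\cos\theta}\,\mathrm{d}s=2\pi\int_{-1}^1e^{rt}\,\mathrm{d}t=4\pi\frac{\sinh r}{r}=4\pi i_0(r).
\]
In $d=2$, $\int_0^{2\pi}e^{r\cos\theta}\,\mathrm{d}\theta=2\pi I_0(r)$ is the standard integral representation of $I_0$. This gives the first identity in each dimension.

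\emph{Real case, matrix identity.} The matrix $M(\bm{\alpha})=\int\hat{\bm{x}}\hat{\bm{x}}^\top e^{\hat{\bm{x}}\cdot\bm{\alpha}}\,\mathrm{d}s$ is symmetric and equivariant under rotations, so $M(Q\bm{\alpha})=QM(\bm{\alpha})Q^\top$. Hence it has the form $A(r)\mathbf{I}+B(r)\bm{\alpha}\bm{\alpha}^\top/r^2$. Two scalar conditions determine $A$ and $B$.

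\emph{Trace condition.} Since $|\hat{\bm{x}}|^2=1$, the trace equals the scalar integral. This gives $dA+B=4\pi i_0(r)$ for $d=3$ and $2\pi I_0(r)$ for $d=2$.

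\emph{Longitudinal condition.} The component $\bm{e}_d^\top M\bm{e}_d=A+B$ is $\int(\hat{\bm{x}}\cdot\bm{e}_d)^2e^{r\hat{\bm{x}}\cdot\bm{e}_d}\,\mathrm{d}s$, which is the second derivative in $r$ of the scalar integral. For $d=3$ this is $4\pi i_0''(r)$. The recurrence for modified spherical Bessel functions, $i_0''=\tfrac13 i_0+\tfrac23 i_2$ (from $i_0'=i_1$ and $i_1'=\tfrac{1}{3}(i_0+2i_2)$), gives
\[
A+B=\tfrac{4\pi}{3}(i_0+2i_2).
\]
Solving the two equations yields $B=4\pi i_2$ and $A=\tfrac{4\pi}{3}(i_0-i_2)$, as claimed. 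For $d=2$, $I_0''=\tfrac12(I_0+I_2)$, so $A+B=\pi(I_0+I_2)$. Combined with $2A+B=2\pi I_0$, this gives $A=\pi(I_0-I_2)$ and $B=2\pi I_2$.

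\emph{Extension to complex $\bm{\alpha}$.} The left-hand sides are entire in $\bm{\alpha}\in\mathbb{C}^d$. On the right, $i_0(\sqrt{w})$ and $I_0(\sqrt{w})$ are entire functions of $w=\bm{\alpha}\cdot\bm{\alpha}$, because these functions are even. Likewise $i_2(\sqrt{w})/w$ and $I_2(\sqrt{w})/w$ are entire, since $i_2$ and $I_2$ are even and vanish to second order at $0$. So each right-hand side is an entire function of $\bm{\alpha}$, and the choice of branch of $\sqrt{\cdot}$ is immaterial. Two entire functions on $\mathbb{C}^d$ that agree on $\mathbb{R}^d\setminus\{0\}$ agree everywhere, by the identity theorem applied in each variable in turn.

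The main obstacle is the bookkeeping in this continuation step. The expression $\bm{\alpha}\bm{\alpha}^\top/(\bm{\alpha}\cdot\bm{\alpha})$ is singular where $\bm{\alpha}\cdot\bm{\alpha}=0$, which can happen for $\bm{\alpha}\neq0$. So the identity must be read as the entire function $i_2(\sqrt{w})/w\cdot\bm{\alpha}\bm{\alpha}^\top$, or restricted to $\bm{\alpha}\cdot\bm{\alpha}\neq0$. I would state this interpretation explicitly. The recurrence formulas are routine.
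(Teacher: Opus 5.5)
Your argument is correct, and it takes a different route from the paper.

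\emph{The paper's route.} The paper expands $e^{\hat{\bx}\cdot\bm{\alpha}}$ in spherical harmonics with coefficients $i_n(\sqrt{\bm{\alpha}\cdot\bm{\alpha}})$, which is a Funk--Hecke/addition-theorem argument. It writes the entries of $\hat{\bm{x}}\hat{\bm{x}}^\top$ as combinations of $Y_0^0$ and the $Y_2^m$, and reads off the result by orthogonality. In two dimensions it uses the generating function $I_0+2\sum_n I_n\cos(n\varphi)$.

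\emph{Your route.} You use symmetry to reduce the matrix integral to two scalar unknowns $A,B$. The trace gives the scalar integral $4\pi\sinh r/r$ or $2\pi I_0(r)$, and the longitudinal entry gives its second $r$-derivative. Combined with $i_0''=\tfrac13(i_0+2i_2)$ and $I_0''=\tfrac12(I_0+I_2)$, solving the resulting $2\times2$ systems gives exactly the stated coefficients; I checked this.

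\emph{Comparison.} Your route is more elementary: it needs no addition theorem or explicit degree-two harmonics, and it treats $d=2,3$ uniformly. The paper's route gives all entries at once and extends more naturally to higher moments $\hat{\bm{x}}^{\otimes k}$.

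Your analytic-continuation step is also more careful than the paper. The paper tacitly evaluates $Y_n^m$ at the complex point $\bm{\alpha}/\sqrt{\bm{\alpha}\cdot\bm{\alpha}}$, which is itself a continuation and is undefined at isotropic vectors ($\bm{\alpha}\cdot\bm{\alpha}=0$, $\bm{\alpha}\neq0$). Your reading of the formula through the entire function $i_2(\sqrt{w})/w$ (resp.\ $I_2(\sqrt{w})/w$) is the right fix. It is not idle: $\bm{\eta}=\sigma\by+\rmi\Re(\omega)(\bz-\by)$ in Lemma~\ref{lem:Gomega} can be isotropic.

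\emph{One small fix.} In $d=2$, rotation equivariance alone does not force the form $A\mathbf{I}+B\hat{\bm{\alpha}}\hat{\bm{\alpha}}^\top$, because the stabilizer of $\hat{\bm{\alpha}}$ in $SO(2)$ is trivial. Use instead the reflection that fixes $\hat{\bm{\alpha}}$; the surface measure is $O(d)$-invariant. Under this reflection the off-diagonal integrand is odd, so that entry vanishes.
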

    \begin{proof}
        It suffices to prove the result in the three-dimensional case, since the two-dimensional case follows by an analogous argument. Let $Y_n^m(\hat{\bm{x}})$ denote the spherical harmonic functions, where $\hat{\bm{x}}=(\hat{x}_1,\hat{x}_2,\hat{x}_3)\in\mathbb{S}^2$. We recall their explicit forms:
        \begin{equation}\label{eq:harmonic}
        \begin{aligned}
        &Y_0^0(\hat{\bm{x}})=\frac{1}{2}\sqrt{\frac{1}{\pi}},\quad Y_2^0(\hat{\bm{x}})=\frac{1}{4}\sqrt{\frac{5}{\pi}}(3\hat{x}_3^2-1)= \frac{1}{4}\sqrt{\frac{5}{\pi}}(2-3\hat{x}_1^2-3\hat{x}_2^2),\\ 
        &Y_2^{-1}(\hat{\bm{x}})-Y_2^1(\hat{\bm{x}})= \sqrt{\frac{15}{2\pi}} \hat{x}_1\hat{x}_3,\quad Y_2^{-1}(\hat{\bm{x}})+Y_2^1(\hat{\bm{x}})= -\rmi\sqrt{\frac{15}{2\pi}} \hat{x}_2\hat{x}_3,\\ 
        &Y_2^{-2}(\hat{\bm{x}})-Y_2^2(\hat{\bm{x}})= -\rmi\sqrt{\frac{15}{2\pi}} \hat{x}_1\hat{x}_2,\quad Y_2^{-2}(\hat{\bm{x}})+Y_2^2(\hat{\bm{x}})= \sqrt{\frac{15}{8\pi}}(\hat{x}_1^2-\hat{x}_2^2).
        \end{aligned}
        \end{equation}
    We expand the exponential function in spherical harmonics as
    \begin{equation*}
    e^{\hat{\bx}\cdot\bm{\alpha}}=4\pi \sum_{n=1}^{\infty}\sum_{m=-n}^n i_n(\sqrt{\bm{\alpha}\cdot\bm{\alpha}})Y_n^m(\frac{\bm{\alpha}} {\sqrt{\bm{\alpha}\cdot\bm{\alpha}}}) \overline{Y_n^m(\hat{\bx}) }.
    \end{equation*}
    Multiplying by spherical harmonic functions and integrating over the unit sphere yield
    \begin{equation*}
    \int_{ \mathbb{S}^2} Y_n^m(\hat{\bx}) e^{\hat{\bx}\cdot\bm{\alpha}}\mbox{d}s(\hat{\bx}) = 4\pi i_n( \sqrt{\bm{\alpha}\cdot\bm{\alpha}}) Y_n^m(\frac{\bm{\alpha}} {\sqrt{\bm{\alpha}\cdot\bm{\alpha}}}).
    \end{equation*}
    Combining the above identities, we obtain
    \begin{equation*}
    \begin{aligned}
    &\int_{ \mathbb{S}^2} \mathbf{I} e^{\hat{\bx}\cdot\bm{\alpha}} \, \mbox{d}s(\hat{\bx}) = 4\pi i_0( \sqrt{\bm{\alpha}\cdot\bm{\alpha}})\mathbf{I},\\
    &\int_{ \mathbb{S}^2} \hat{\bm{x}}\hat{\bm{x}}^\top e^{\hat{\bx}\cdot\bm{\alpha}} \, \mbox{d}s(\hat{\bm{x}}) = \frac{4\pi}{3}(i_0(\sqrt{\bm{\alpha}\cdot\bm{\alpha}})- i_2(\sqrt{\bm{\alpha}\cdot\bm{\alpha}}))\mathbf{I} +4\pi i_2( \sqrt{\bm{\alpha}\cdot\bm{\alpha}})\frac{\bm{\alpha}\bm{\alpha}^\top } {\bm{\alpha}\cdot\bm{\alpha}}.
    \end{aligned}
    \end{equation*}
    This completes the proof.
    \end{proof}
    
    \begin{rem}
    In the two-dimensional case, the conclusion can be established using the expansion
    \begin{equation*}
    e^{\hat{\bx}\cdot\bm{\alpha}}=I_0(\sqrt{\bm{\alpha}\cdot\bm{\alpha}})+ 2\sum_{n=1}^{\infty} I_n( \sqrt{\bm{\alpha}\cdot\bm{\alpha}})\cos(n\varphi),
    \end{equation*}
    where $\varphi$ denotes the angle between $\bm{x}$ and $\bm{\alpha}$.
    \end{rem}

    Building on the identities established in Lemma \ref{lem:integral}, we derive the following key estimates for the subsequent analysis of the indicator function.

    \begin{lem}\label{lem:Gomega}
    Let $\Gamma=\{\bm{x}\in \mathbb{R}^d\backslash \overline{D}:|\bm{x}|=r\}$ denote the sphere of radius $r$. For $\by\in D$, define the matrix-valued integral
    \begin{equation*}
    \mathbf{G}_\omega(\bm{z}, \by)
    =\int_{\Gamma} \sum_{\tau=p,s} \bm{\Gamma}_\tau(\bm{x}-\bm{z}) \frac{e^{-\rmi\Re (\omega) c_\tau^{-1}|\bm{x}-\bm{z}|}}{|\bm{x}-\bm{z}|^{\frac{d-1}{2}}}  \bm{\Gamma}^\omega(\bm{x},\bm{y}) \, \mathrm{d}s(\bm{x}).
    \end{equation*}
    Assume that $\omega\in \mathbb{C}_{\sigma_0}$ satisfies $\omega\in(0+\mathrm{i}\sigma, \xi_{\mathrm{max}}+\mathrm{i}\sigma)$, where $\xi_{\mathrm{max}}$ is a positive constant such that $|\Re(\omega)|<\xi_{\mathrm{max}}$. Then the following asymptotic behavior holds as $r\rightarrow \infty$:
    \begin{equation}\label{eq:Gomegai}
    |\mathbf{G}^{i,j}_\omega(\bm{z}, \bm{y})| = 4^{d-2}\pi\left|a_{d,1} \frac{e^{-\sigma c_p^{-1}r}}{|\omega|^{\frac{3-d}{2}}} \mathbf{K}_p^{i,j}(\bm{\eta}) +a_{d,2} \frac{ e^{-\sigma c_s^{-1}r}}{|\omega|^{\frac{3-d}{2}}} \mathbf{K}_s^{i,j}(\bm{\eta}) \right| \Big\{1+\mathcal{O}\big(\frac{|\omega|}{r^{\frac{d-1}{2}}}\big)\Big\},
    \end{equation}
    where $\bm{\eta}=\sigma \bm{y}+\mathrm{i}\Re(\omega)(\bm{z}-\bm{y})$, and
    \begin{equation}\label{eq:Keta}
    \begin{aligned}
     \mathbf{K}_{p}(\bm{\eta})&=\left\{
    \begin{aligned}
    -\left(I_0(c_{p}^{-1}\sqrt{\bm{\eta}\cdot\bm{\eta}})-I_2( c_{p}^{-1}\sqrt{\bm{\eta}\cdot\bm{\eta}})\right)\mathbf{I} -2I_2( c_{p}^{-1}\sqrt{\bm{\eta}\cdot\bm{\eta}})\frac{\bm{\eta}\bm{\eta}^\top} {\sqrt{\bm{\eta}\cdot\bm{\eta}}}, &\quad d=2,\\
    -\frac{i_0(c_{p}^{-1}\sqrt{\bm{\eta}\cdot\bm{\eta}}) -i_2(c_{p}^{-1}\sqrt{\bm{\eta}\cdot\bm{\eta}})}{3}\mathbf{I} -i_2( c_{p}^{-1}\sqrt{\bm{\eta}\cdot\bm{\eta}})\frac{\bm{\eta}\bm{\eta}^\top} {\sqrt{\bm{\eta}\cdot\bm{\eta}}}, & \quad d=3 ,
    \end{aligned}\right.\\
    \mathbf{K}_{s}(\bm{\eta})&=\left\{
    \begin{aligned}
    -\left(I_0(c_{s}^{-1}\sqrt{\bm{\eta}\cdot\bm{\eta}})+I_2( c_{s}^{-1}\sqrt{\bm{\eta}\cdot\bm{\eta}})\right)\mathbf{I} +2I_2( c_{s}^{-1}\sqrt{\bm{\eta}\cdot\bm{\eta}})\frac{\bm{\eta}\bm{\eta}^\top} {\sqrt{\bm{\eta}\cdot\bm{\eta}}}, &\quad d=2,\\
    -\frac{2i_0(c_{s}^{-1}\sqrt{\bm{\eta}\cdot\bm{\eta}}) +i_2(c_{s}^{-1}\sqrt{\bm{\eta}\cdot\bm{\eta}})}{3}\mathbf{I} +i_2( c_{s}^{-1}\sqrt{\bm{\eta}\cdot\bm{\eta}})\frac{\bm{\eta}\bm{\eta}^\top} {\sqrt{\bm{\eta}\cdot\bm{\eta}}}, & \quad d=3.
    \end{aligned}\right.
    \end{aligned}
    \end{equation}
   
     Furthermore, the components $\mathbf{G}^{i,j}_\omega( \bz, \by)$ satisfy the estimates
    \begin{equation}\label{eq:Gomegae}
  |\mathbf{G}^{i,j}_\omega(\bm{z},\bm{y})|= \big(a_{d,1}e^{-\sigma c_p^{-1}r}+ a_{d,2}e^{-\sigma c_s^{-1}r}\big)\mathcal{O}\Big(\frac{1}{|\omega||\bm{z}-\bm{y}|^{\frac{d-1}{2}}}\Big) \left\{1+\mathcal{O}\big(\frac{|\omega|}{r^{\frac{d-1}{2}}}\big)\right\},
    \end{equation}
    as $|\bm{z}-\bm{y}|\rightarrow \infty$ and $r\rightarrow \infty$, where the constants $a_{d,1}$ and $a_{d,2}$ are defined in \eqref{eq:defa14}.
    \end{lem}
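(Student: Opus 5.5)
We substitute the far-field asymptotics of $\bm{\Gamma}^\omega(\bm{x},\bm{y})=\bm{\Gamma}^\omega(\bm{x}-\bm{y})$ into the definition of $\mathbf{G}_\omega$ and expand everything for $|\bm{x}|=r\to\infty$. Writing $\bm{x}=r\hat{\bm{x}}$, we use
\[
|\bm{x}-\bm{y}|=r-\hat{\bm{x}}\cdot\bm{y}+\mathcal{O}(r^{-1}),\qquad |\bm{x}-\bm{z}|=r-\hat{\bm{x}}\cdot\bm{z}+\mathcal{O}(r^{-1}).
\]
We also use $\bm{\Gamma}_\tau(\bm{x}-\bm{y})=\bm{\Gamma}_\tau(\hat{\bm{x}})+\mathcal{O}(r^{-1})$, and the same for $\bm{z}$. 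The factor $|\bm{x}-\bm{z}|^{-\frac{d-1}{2}}|\bm{x}-\bm{y}|^{-\frac{d-1}{2}}$ equals $r^{-(d-1)}(1+\mathcal{O}(r^{-1}))$, and it cancels against $\mathrm{d}s(\bm{x})=r^{d-1}\,\mathrm{d}s(\hat{\bm{x}})$. Hence, up to a relative error $\mathcal{O}(|\omega| r^{-\frac{d-1}{2}})$, $\mathbf{G}_\omega$ reduces to an integral over $\mathbb{S}^{d-1}$. The error comes from the $\mathcal{O}(|\bm{x}|^{-\frac{d+1}{2}})$ remainder in the expansion of $\bm{\Gamma}^\omega$ and from the phase corrections, which carry a factor $|\omega|$. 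The integrand of this spherical integral is a sum over $\tau,\tau'\in\{p,s\}$ of
\[
a_{d,\tau'}\,\omega^{-\frac{3-d}{2}}\,\bm{\Gamma}_\tau(\hat{\bm{x}})\bm{\Gamma}_{\tau'}(\hat{\bm{x}})\,
e^{-\mathrm{i}\Re(\omega)c_\tau^{-1}(r-\hat{\bm{x}}\cdot\bm{z})}\,e^{\mathrm{i}\omega c_{\tau'}^{-1}(r-\hat{\bm{x}}\cdot\bm{y})}.
\]

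Next, we apply Lemma \ref{lem:gamma_ps}. The cross terms $\tau\neq\tau'$ vanish because $\bm{\Gamma}_p\bm{\Gamma}_s=\bm{\Gamma}_s\bm{\Gamma}_p=0$. The diagonal terms give $\bm{\Gamma}_\tau\bm{\Gamma}_\tau=-\bm{\Gamma}_\tau$. This is exactly where the time-shift alignment makes the functional non-decoupled yet mode-separating. For $\tau=\tau'$, the $r$-dependent phases combine to $e^{\mathrm{i}(\omega-\Re\omega)c_\tau^{-1}r}=e^{-\sigma c_\tau^{-1}r}$, which produces the prefactors $e^{-\sigma c_\tau^{-1}r}$. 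The remaining $\hat{\bm{x}}$-dependent phase is
\[
\exp\big(c_\tau^{-1}\hat{\bm{x}}\cdot(\mathrm{i}\Re(\omega)\bm{z}-\mathrm{i}\omega\bm{y})\big)=\exp\big(c_\tau^{-1}\hat{\bm{x}}\cdot\bm{\eta}\big),
\]
since $-\mathrm{i}\omega\bm{y}=-\mathrm{i}\Re(\omega)\bm{y}+\sigma\bm{y}$. Hence $\bm{\alpha}=c_\tau^{-1}\bm{\eta}$, with $\bm{\eta}=\sigma\bm{y}+\mathrm{i}\Re(\omega)(\bm{z}-\bm{y})$.

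We then have $-\bm{\Gamma}_p=\hat{\bm{x}}\hat{\bm{x}}^\top$ and $-\bm{\Gamma}_s=\mathbf{I}-\hat{\bm{x}}\hat{\bm{x}}^\top$. Lemma \ref{lem:integral} evaluates $\int_{\mathbb{S}^{d-1}}\hat{\bm{x}}\hat{\bm{x}}^\top e^{\hat{\bm{x}}\cdot\bm{\alpha}}$ and $\int_{\mathbb{S}^{d-1}}\mathbf{I}e^{\hat{\bm{x}}\cdot\bm{\alpha}}$ in closed form. Collecting the $I_0,I_2$ (resp.\ $i_0,i_2$) terms and pulling out the common factor $4^{d-2}\pi$ (which is $\pi$ for $d=2$ and $4\pi$ for $d=3$) gives $\mathbf{K}_p$ and $\mathbf{K}_s$ in \eqref{eq:Keta}. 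For example, in $d=3$ the $s$-part is $4\pi\big(i_0-\tfrac{1}{3}(i_0-i_2)\big)\mathbf{I}-4\pi i_2\,\bm{\eta}\bm{\eta}^\top/(\bm{\eta}\cdot\bm{\eta})$, which up to overall sign matches $\mathbf{K}_s$. The overall sign and the unimodular factor from $\omega^{-\frac{3-d}{2}}$ versus $|\omega|^{-\frac{3-d}{2}}$ disappear after taking moduli componentwise. This yields \eqref{eq:Gomegai}. The assumption $|\Re\omega|<\xi_{\max}$ makes the error $\mathcal{O}(|\omega|r^{-\frac{d-1}{2}})$ uniform.

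For \eqref{eq:Gomegae}, we examine the large-argument behaviour of $I_n(\zeta)$ and $i_n(\zeta)$ with $\zeta=c_\tau^{-1}\sqrt{\bm{\eta}\cdot\bm{\eta}}$. We have $\bm{\eta}\cdot\bm{\eta}=\sigma^2|\bm{y}|^2-\Re(\omega)^2|\bm{z}-\bm{y}|^2+2\mathrm{i}\sigma\Re(\omega)\bm{y}\cdot(\bm{z}-\bm{y})$. As $|\bm{z}-\bm{y}|\to\infty$, this is dominated by $-\Re(\omega)^2|\bm{z}-\bm{y}|^2$, so $\zeta\approx\pm\mathrm{i}c_\tau^{-1}\Re(\omega)|\bm{z}-\bm{y}|$ is nearly imaginary. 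In that regime, $I_n(\mathrm{i}t)=\mathrm{i}^nJ_n(t)$ and $i_n(\mathrm{i}t)=\mathrm{i}^n j_n(t)$ decay like $t^{-\frac{d-1}{2}}$. A small real part of $\zeta$ stays bounded because $\bm{y}\in D$ is bounded. Also, $\bm{\eta}\bm{\eta}^\top/(\bm{\eta}\cdot\bm{\eta})$ stays bounded when $\Re\omega\neq0$. This gives a bound $\mathcal{O}\big((|\omega||\bm{z}-\bm{y}|)^{-\frac{d-1}{2}}\big)$, which we then convert into the stated form.

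The main obstacle is controlling these Bessel asymptotics with complex argument uniformly. The argument $\zeta$ has a nonzero real part, so we must check that $e^{|\Re\zeta|}$ stays bounded; for this we would use the uniform Hankel-type expansions. The matching of the $|\omega|$ power to the stated $|\omega|^{-1}$ also needs care, particularly the factor $\omega^{-\frac{3-d}{2}}$ in two dimensions. Near $\Re\omega\to0$ the bound degenerates, so we would restrict $\Re\omega$ away from zero in this step or absorb that regime into the constant.
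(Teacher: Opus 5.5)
Your proposal is correct. The reduction to the spherical integral is exactly the paper's argument; the decay estimate \eqref{eq:Gomegae} is obtained by a different route.

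\textbf{First half (same as the paper).} You use the far-field expansion of $\bGa^\omega$ and Lemma~\ref{lem:gamma_ps} to drop the $p$--$s$ cross terms and replace $\bGa_\tau\bGa_\tau$ by $-\bGa_\tau$. The phase bookkeeping then gives $e^{-\sigma c_\tau^{-1}r}$ and $e^{c_\tau^{-1}\hat{\bx}\cdot\bm{\eta}}$, and Lemma~\ref{lem:integral} finishes. One small point: your $\bm{\eta}\bm{\eta}^\top/(\bm{\eta}\cdot\bm{\eta})$ is what Lemma~\ref{lem:integral} actually yields. The $\sqrt{\bm{\eta}\cdot\bm{\eta}}$ denominator in the printed $\mathbf{K}_\tau$ is a typo, which the paper's proof also carries. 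So you should say you correct it, not that you match it.

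\textbf{Second half (different route).} The paper does not return to the Bessel closed form for \eqref{eq:Gomegae}. It applies the standard decay estimate for oscillatory integrals on $\mathbb{S}^{d-1}$ directly, with phase $\Re(\omega)c_\tau^{-1}\hat{\bx}\cdot(\bz-\by)$ and smooth amplitude $e^{\sigma c_\tau^{-1}\hat{\bx}\cdot\by}$ times the entries of $\bGa_\tau$. You instead read the decay off the large-argument behaviour of $I_n$ and $i_n$ along a ray near the imaginary axis.

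\textbf{What each route buys.} The paper's route avoids complex-argument Bessel asymptotics. It also avoids choosing a branch of $\sqrt{\bm{\eta}\cdot\bm{\eta}}$ (harmless anyway, since $I_0,I_2,i_0,i_2$ are even) and the separate check that $\bm{\eta}\bm{\eta}^\top/(\bm{\eta}\cdot\bm{\eta})$ stays bounded.

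Your route is more explicit, and in $d=3$ it is elementary. There $i_0$ and $i_2$ are finite combinations of $\sinh\zeta$ and $\cosh\zeta$ over powers of $\zeta$. Moreover $\sqrt{\bm{\eta}\cdot\bm{\eta}}=\pm\big(\rmi\Re(\omega)|\bz-\by|+\sigma\,\by\cdot\widehat{(\bz-\by)}\big)+o(1)$, which gives $|\Re\zeta|\le c_\tau^{-1}\sigma|\by|+o(1)$. So the Hankel-type uniformity you worry about is only needed for $I_n$ in $d=2$.

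Your route also shows that the decay is in $|\Re(\omega)||\bz-\by|$, not $|\omega||\bz-\by|$ as printed. At $\Re(\omega)=0$ one has $\bm{\eta}=\sigma\by$, so there is no decay at all. Your caveat about $\Re(\omega)$ near zero is therefore a genuine restriction that the paper's formulation hides.

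\textbf{The $|\omega|$ power.} When $|\Re(\omega)|\asymp|\omega|$, the power matching you flagged works in both dimensions, since $|\omega|^{-\frac{3-d}{2}}(|\omega||\bz-\by|)^{-\frac{d-1}{2}}=|\omega|^{-1}|\bz-\by|^{-\frac{d-1}{2}}$.
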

    \begin{proof}
    We give the proof for the three-dimensional case only, since the two-dimensional case follows by an analogous argument. We first note the following asymptotic expansions:
    \begin{equation*}
    |\bm{x}-\bm{y}|=|\bm{x}|-\hat{\bm{x}}\cdot \by+\mathcal{O}\big(\frac{1}{|\bm{x}|}\big),\quad \frac{1}{|\bm{x}-\bm{y}|}=\frac{1}{|\bm{x}|} \Big\{1+\mathcal{O}\big(\frac{1}{|\bm{x}|}\big)\Big\}.
    \end{equation*}
    Consequently, by invoking Lemma \ref{lem:gamma_ps}, we obtain
    \begin{equation*}
    \mathbf{G}_\omega(\bm{z},\bm{y})=-\left(a_{3,1}\mathbf{G}_{\omega,p}(\bm{z},\bm{y}) + a_{3,2}\mathbf{G}_{\omega,s}(\bm{z},\bm{y})\right) \Big\{1+\mathcal{O}\big(\frac{|\omega|}{r}\big)\Big\}, \quad r\rightarrow \infty,
    \end{equation*}
    where
    \begin{equation*}
    \mathbf{G}_{\omega,\tau}(\bm{z},\bm{y})
     =e^{-\sigma c_\tau^{-1}r}\int_{\mathbb{S}^2} \bm{\Gamma}_{\tau}(\bm{x}) e^{ c_{\tau}^{-1}\hat{\bm{x}}\cdot \bm{\eta}} \, \mbox{d}s(\hat{\bm{x}}), \quad \tau=p,s, 
    \end{equation*}
    and $\bm{\eta}=\sigma \bm{y}+\mathrm{i}\Re(\omega)(\bm{z}-\bm{y})$. Applying Lemma \ref{lem:integral}, it follows that
    \begin{align*}
    \mathbf{G}_{\omega,p}(\bm{z},\bm{y})=& 4\pi e^{-\sigma c_{p}^{-1}r} \Big(-\frac{i_0(c_{p}^{-1}\sqrt{\bm{\eta}\cdot\bm{\eta}}) -i_2(c_{p}^{-1}\sqrt{\bm{\eta}\cdot\bm{\eta}})}{3}\mathbf{I} -i_2( c_{p}^{-1}\sqrt{\bm{\eta}\cdot\bm{\eta}})\frac{\bm{\eta}\bm{\eta}^\top} {\sqrt{\bm{\eta}\cdot\bm{\eta}}}\Big),\\
     \mathbf{G}_{\omega,s}(\bm{z},\bm{y})=& 4\pi e^{-\sigma c_{s}^{-1}r} \Big(-\frac{2i_0(c_{s}^{-1}\sqrt{\bm{\eta}\cdot\bm{\eta}}) +i_2(c_{s}^{-1}\sqrt{\bm{\eta}\cdot\bm{\eta}})}{3}\mathbf{I} +i_2( c_{s}^{-1}\sqrt{\bm{\eta}\cdot\bm{\eta}})\frac{\bm{\eta}\bm{\eta}^\top} {\sqrt{\bm{\eta}\cdot\bm{\eta}}}\Big).
    \end{align*}
    This establishes \eqref{eq:Gomegai}. Furthermore, using properties of oscillatory integrals \cite{stein1993real}, we have
    \begin{equation*}
    \Big|\int_{\mathbb{S}^{d-1}} e^{\rmi\Re (\omega)c_{\tau}^{-1}\bm{x}\cdot(\bm{z}-\bm{y})} e^{\sigma c_{\tau}^{-1}\bm{x}\cdot \bm{y}}\,\text{d}s(\bm{x})\Big| =\mathcal{O}\Big(\frac{1}{|\omega(\bm{z}-\bm{y})|^{\frac{d-1}{2}}}\Big), \quad |\bm{z}-\bm{y}|\rightarrow\infty ,
    \end{equation*}
Thus, the estimate \eqref{eq:Gomegae} follows, which completes the proof.
    \end{proof}

    Assume that the bounded domain $D$, which characterizes the region containing the inhomogeneous scatterers, consists of finitely many well-separated subdomains with small diameters. Specifically, let
    \begin{equation}\label{eq:Dscatters}
    D=\bigcup_{j=1}^N D_j,\quad D_j=\bm{y}^j+\varepsilon B_j, \quad N\in\mathbb{N},
    \end{equation}
    where $\bm{y}^j$ and $B_j$ denote, respectively, the center and the normalized support of the scatterer $D_j$, while $\varepsilon>0$ is a scaling parameter. Furthermore, define the minimum separation distance between the scatterers by
    \begin{equation*}
    L:=\min_{1\leq i,j\leq N,i\neq j} \text{dist}(D_i,D_j),
    \end{equation*}
    where $\text{dist}(D_i,D_j)$ denotes the distance between $D_i$ and $D_j$. Under these geometric assumptions, the Born approximation remains valid.
    
    \begin{lem}\label{lem:us}
        Suppose that the parameters are given by \eqref{eq:confi} and that the density $\rho_1(\bm{x})$ is bounded in $D$. Assume that the scatterer configuration \eqref{eq:Dscatters} holds. Then the solution $\hat{\bm{u}}^s(\bm{x},\omega)$ to \eqref{eq:frequency-domain} satisfies the following asymptotic estimates:
        \begin{equation*}
        \|\hat{\bm{u}}^s(\cdot,\omega)\|_{C(D)^d}
        =\left\{
        \begin{aligned}
        &\mathcal{O}\left(|\omega|^2\varepsilon^2 \ln(|\omega| \varepsilon)\right), &d=2,\\ &\mathcal{O}\left(|\omega|^2\varepsilon^2\right), &d=3, 
        \end{aligned}
        \right.
        \end{equation*}
        as $|\omega|\varepsilon\rightarrow 0$.
        \end{lem}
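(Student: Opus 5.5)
We plan to prove the estimate through the Lippmann--Schwinger representation \eqref{eq:usxomega} and a fixed-point/Neumann series argument on $C(D)^d$. Write $q(\by):=\frac{\lambda_2}{\lambda_1}\rho_1(\by)-\rho_2$, which is bounded because $\rho_1$ is bounded. The total field is $\hat{\bu}=\hat{\bu}^i+\hat{\bu}^s$. Define the volume operator
\begin{equation*}
(T_\omega \bm{v})(\bx):=\int_D \bGa^\omega(\bx,\by)\,q(\by)\,\omega^2 \bm{v}(\by)\,\mathrm{d}\by ,\qquad \bx\in D .
\end{equation*}
Then \eqref{eq:usxomega} reads $\hat{\bu}^s=T_\omega\hat{\bu}^i+T_\omega\hat{\bu}^s$. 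The estimate will follow from two facts. First, $\|T_\omega\|_{C(D)^d\to C(D)^d}$ is of the stated order, namely $|\omega|^2\varepsilon^2|\ln(|\omega|\varepsilon)|$ in $d=2$ and $|\omega|^2\varepsilon^2$ in $d=3$. Second, this norm is therefore $<1/2$ for $|\omega|\varepsilon$ small. Granting both, $I-T_\omega$ is invertible by a Neumann series, and
\begin{equation*}
\|\hat{\bu}^s\|_{C(D)^d}\le 2\|T_\omega\|\,\|\hat{\bu}^i\|_{C(D)^d}.
\end{equation*}
This gives the claim, with $\|\hat{\bu}^i\|_{C(D)^d}$ absorbed into the constant, since the incident field is a fixed smooth background field near $D$.

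The core step is the kernel bound. From \eqref{eq:Gamma}, the small-argument behaviour of the fundamental solution is as follows. In $d=3$, expanding $e^{\rmi\omega c^{-1}|\bx|}$ shows $\bGa^\omega(\bx)=\mathcal{O}(|\bx|^{-1})$ uniformly for $|\omega||\bx|\lesssim 1$. The $\omega^{-2}\partial_{ij}^2$ terms cancel their singular parts in the difference of the $p$ and $s$ exponentials, and leave the Kelvin-type $|\bx|^{-1}$ kernel. In $d=2$, the small-argument expansion $H_0^{(1)}(s)=\frac{2\rmi}{\pi}\ln s+\mathcal{O}(1)$, together with the analogous cancellation in the second-derivative terms, gives $\bGa^\omega(\bx)=\mathcal{O}(|\ln(|\omega||\bx|)|+1)$.

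Next we split $D=\bigcup_j D_j$. For $\bx\in D_i$, the self-interaction integral over $D_i=\by^i+\varepsilon B_i$ is bounded by scaling $\by=\by^i+\varepsilon\bm{\xi}$. In $d=3$ this gives
\begin{equation*}
\omega^2\int_{D_i}|\bx-\by|^{-1}\,\mathrm{d}\by=\mathcal{O}(|\omega|^2\varepsilon^2).
\end{equation*}
In $d=2$ it gives
\begin{equation*}
\omega^2\varepsilon^2\int_{B_i}|\ln(|\omega|\varepsilon|\bm{\xi}-\bm{\xi}'|)|\,\mathrm{d}\bm{\xi}'=\mathcal{O}(|\omega|^2\varepsilon^2|\ln(|\omega|\varepsilon)|).
\end{equation*}
The interactions with $D_j$, $j\neq i$, have a kernel bounded in terms of $L$ (and $\omega$), times the volume $|D_j|=\mathcal{O}(\varepsilon^d)$. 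They are therefore of higher order, $\mathcal{O}(|\omega|^2\varepsilon^d)$ up to $L$- and $\omega$-dependent factors, provided the scatterers are well separated.

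The main obstacle is making the cancellation in the $\omega^{-2}\partial^2_{ij}$ part of $\bGa^\omega$ rigorous and uniform. Naively this term has a $|\bx|^{-3}$ (resp. $|\bx|^{-2}$) singularity. Differentiating the expansion of the difference of the $p$ and $s$ exponentials (resp. Hankel functions) shows that the singular part is $\omega$-independent and homogeneous of degree $-1$ (resp. logarithmic) after the $\omega^{2}$ factors cancel. I would handle this by writing $\bGa^\omega=\bGa^0+(\bGa^\omega-\bGa^0)$, with $\bGa^0$ the static Kelvin matrix, and bounding the remainder by a constant (times $|\ln|\omega||$ in two dimensions) on $\{|\omega||\bx|\lesssim1\}$. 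The $\ln$ factor in $d=2$ comes precisely from the $\ln|\omega|$ term in this remainder combined with the logarithm of the Kelvin matrix on the $\varepsilon$-scaled set. If continuity in $C(D)$ is delicate for the strongly singular pieces, I would instead argue in $L^\infty$, using that the above kernel bounds are weakly singular.
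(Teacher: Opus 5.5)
Your proposal is correct and follows essentially the same route as the paper: the same volume operator $T$ on $C(D)^d$, a norm bound of the stated order from the local behaviour of $\bGa^\omega$ on the $\varepsilon$-scaled subdomains, and a Neumann series with $\|T\|\le 1/2$ giving $\|\hat{\bu}^s\|_{C(D)^d}\le 2\|T\|\,\|\hat{\bu}^i\|_{C(D)^d}$. Your kernel analysis is more careful than the paper's, which estimates $\bigl|\int_D \Gamma^\omega_{i,j}(\bx,\by)\,\mathrm{d}\by\bigr|$ instead of $\int_D |\bGa^\omega(\bx,\by)|\,\mathrm{d}\by$ and does not address the cancellation in the $\omega^{-2}\partial^2_{ij}$ terms; one small correction is that in $d=2$ the cross-interaction terms carry the same power $\varepsilon^2$ and are dominated only through the logarithm, so they are not of higher order in $\varepsilon$.
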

        \begin{proof}
        In accordance with \eqref{eq:usxomega}, introduce the operator $T:C(D)^d\rightarrow C(D)^d$ defined by
        \begin{equation*}
        T\bm{f}(\bm{x})= \omega^2\int_{D}\bm{\Gamma}^\omega(\bm{x},\bm{y}) \Big(\frac{\lambda_2}{\lambda_1}\rho_1(\bm{y})-\rho_2\Big)\bm{f}(\bm{y}) \,\mbox{d}\bm{y}.
        \end{equation*}
        From the definition of $\bm{\Gamma}^\omega(\bm{x},\bm{y})$ in \eqref{eq:Gamma}, it follows that
        \begin{equation*}
        \Big|\int_{D}\bm{\Gamma}_{i,j}^\omega(\bm{x},\bm{y})\,\mbox{d}\bm{y} \Big|\leq\sum_{k=1}^N\Big|\int_{D_k} \bm{\Gamma}_{i,j}^\omega(\bm{x},\bm{y})\,\mbox{d}\bm{y} \Big|=\left\{
        \begin{aligned}
        &\mathcal{O}\left(\varepsilon^2 \ln(|\omega| \varepsilon)\right), &d=2,\\
        &\mathcal{O}\left(\varepsilon^2\right), &d=3.
        \end{aligned}
        \right.
        \end{equation*}
        Consequently, we obtain the following norm estimates:
        \begin{equation*}
         \|T\bm{f}\|_{C(D)^d}
        \leq \Big\|\frac{\lambda_2}{\lambda_1}\rho_1-\rho_2\Big\|_{C(D)^d} \|\bm{f}\|_{C(D)^d}\left\{
        \begin{aligned}
        &\mathcal{O}\left(|\omega|^2\varepsilon^2 \ln(|\omega| \varepsilon)\right), &d=2,\\
        &\mathcal{O}\left(|\omega|^2\varepsilon^2\right), &d=3.
        \end{aligned}
        \right.
        \end{equation*}
        Hence, the operator norm of $T$ satisfies
        \begin{equation*}
        \|T\|=\left\{
        \begin{aligned}
        &\mathcal{O}\left(|\omega|^2\varepsilon^2 \ln(|\omega| \varepsilon)\right), &d=2,\\
        &\mathcal{O}\left(|\omega|^2\varepsilon^2\right), &d=3.
        \end{aligned}
        \right.
        \end{equation*}
        Using the relation $\hat{\bm{u}}^s=T\hat{\bm{u}}$, we deduce that $(I-T)\hat{\bm{u}}=\hat{\bm{u}}^i$. Choosing $|\omega|$ and $\varepsilon$ sufficiently small so that $\|T\|\leq 1/2$ ensures that the operator $I-T$ is invertible. Therefore,
        \begin{equation*}
        \|\hat{\bm{u}}^s\|_{C(D)^d} =\|T\hat{\bm{u}}\|_{C(D)^d} =\|T(I-T)^{-1}\hat{\bm{u}}^i\|_{C(D)^d}= \left\{
        \begin{aligned}
        &\mathcal{O}\left(|\omega|^2\varepsilon^2 \ln(|\omega| \varepsilon)\right), &d=2,\\
        &\mathcal{O}\left(|\omega|^2\varepsilon^2\right), &d=3,
        \end{aligned}
        \right.
        \end{equation*}
        as $|\omega|\varepsilon\rightarrow 0$. 
        This completes the proof.
        \end{proof}

        With the preliminary results established, we now present the principal theorem of this paper, which provides a rigorous analysis of the imaging functional $I(\bz)$.

        \begin{thm}\label{thm:asymptotic}
            Suppose that the conditions of Lemmas \ref{lem:Gomega} and \ref{lem:us} hold, and let the incident wave satisfy $\bm{u}^i(\bm{x},t)\in H^{4+s}_\sigma(\mathbb{R},(L^2(D))^d)$ with $s>0$. Let $\omega_\mathrm{max}:=\xi_\mathrm{max}+\mathrm{i}\sigma \in \mathbb{C}_{\sigma_0}$ denote an upper bound for the frequency of the incident wave $\bm{u}^i(\bm{x},t)$ such that $1<|\omega_\mathrm{max}|\ll1/\varepsilon$. If the sampling point $\bm{z}$ lies in a neighborhood of the scatterer $D_j$, then
            \begin{equation*}
            \begin{aligned}
            \mathcal{I}(\bm{z})=&16^{d-2}\pi^2 \varepsilon^{2d} \big| a_{d,1} e^{-\sigma c_p^{-1}r}\mathbf{K}_p(\bm{\eta}^j)+ a_{d,2} e^{-\sigma c_s^{-1}r}\mathbf{K}_s(\bm{\eta}^j)\big|^2 \Big\{M_j\Big(1+\mathcal{O}\big(\frac{|\omega_\mathrm{max}|} {r^{\frac{d-1}{2}}}\big) +\mathcal{O}(\varepsilon)\Big)+ N_j\mathcal{O}\big(\frac{1}{L^{d-1}}\big)\Big\}\\
            &+\varepsilon^{d} (e^{-2\sigma {c_p^{-1}r}}+e^{-2\sigma {c_s^{-1}r}})\mathcal{O}\big(\frac{1}{|\omega_\mathrm{max}|^{2s-d+3}}\big)\\
            \leq&C_d\varepsilon^d \big( e^{-2\sigma c_p^{-1}r}+  e^{-2\sigma c_s^{-1}r}\big) \Big\{ \varepsilon^d M_j\Big(1+\mathcal{O}\big(\frac{|\omega_\mathrm{max}|}{r^{\frac{d-1}{2}}}\big) +\mathcal{O}(\varepsilon)\Big)+ \varepsilon^d N_j\mathcal{O}\big(\frac{1}{L^{d-1}}\big)+\mathcal{O} \big(\frac{1}{|\omega_\mathrm{max}|^{2s-d+3}}\big)\Big\},
            \end{aligned}
            \end{equation*}
            as $r\rightarrow\infty,\varepsilon\rightarrow0, L\rightarrow\infty$, and $ |\omega_\mathrm{max}|^{2s}\rightarrow \infty$. Here, $\mathbf{K}_p$ and $\mathbf{K}_s$ are defined in \eqref{eq:Keta}, while $M_j$ and $N_j$ are defined by
            \begin{equation}\label{eq:MjNj}
            \begin{aligned}
            M_j&= \int_{0}^{\xi_{\mathrm{max}}+\mathrm{i}\sigma}|\omega|^{d+1}\Big|\int_{B_j} \big(\frac{\lambda_2}{\lambda_1}\rho_1(\bm{y}^j+\varepsilon\bm{\zeta})-\rho_2\big) \hat{\bm{u}}^i(\bm{y}^j+\varepsilon\bm{\zeta},\omega) \,\mathrm{d}\bm{\zeta} \Big|^2 \mathrm{d}\omega,\\
            N_j&= \int_{0}^{\xi_{\mathrm{max}}+\mathrm{i}\sigma}|\omega|^{d-1}\Big|\int_{B_j} \big(\frac{\lambda_2}{\lambda_1}\rho_1(\bm{y}^j+\varepsilon\bm{\zeta})-\rho_2\big) \hat{\bm{u}}^i(\bm{y}^j+\varepsilon\bm{\zeta},\omega)\, \mathrm{d}\bm{\zeta} \Big|^2 \mathrm{d}\omega.
            \end{aligned}
            \end{equation}
            Furthermore, if the sampling point $\bm{z}$ is located sufficiently far from $D$, then
            \begin{equation*}
            \begin{aligned}
            \mathcal{I}(\bm{z})=\varepsilon^{d} \big( e^{-2\sigma c_p^{-1}r}+  e^{-2\sigma c_s^{-1}r}\big) \mathcal{O}\Big(\frac{1}{\mathrm{dist}(\bm{z},D)^{\frac{d-1}{2}}}\Big)     \Big\{\varepsilon^d\Big( 1+\mathcal{O} \big(\frac{|\omega_\mathrm{max}|}{r^{\frac{d-1}{2}}}\big) +\mathcal{O}(\varepsilon)\Big) +\mathcal{O} \big(\frac{1}{|\omega_\mathrm{max}|^{2s-d+3}}\big)\Big\},
            \end{aligned}
            \end{equation*}
            as $r\rightarrow \infty,\varepsilon\rightarrow 0, |\omega_\mathrm{max}|^{2s}\rightarrow \infty$, and $\text{dist}(\bm{z},D)\rightarrow \infty$, where $\mathrm{dist}(\bm{z},D):=\min_{1\leq j\leq N}|\bm{z}-\bm{y}^j|$.
            \end{thm}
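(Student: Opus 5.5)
We work throughout with the frequency-domain representation of $\mathcal{I}(\bm{z})$ from the preceding theorem. The first step is to insert the Lippmann--Schwinger representation \eqref{eq:usxomega} into it. Exchanging the order of the $\Gamma$- and $D$-integrals, and using the definition of $\mathbf{G}_\omega$ in Lemma \ref{lem:Gomega}, gives
\begin{equation*}
\mathcal{I}(\bm{z})=\frac{1}{2\pi}\int_{-\infty+\mathrm{i}\sigma}^{\infty+\mathrm{i}\sigma}\Big|\sum_{k=1}^N\int_{D_k}\mathbf{G}_\omega(\bm{z},\bm{y})\Big(\frac{\lambda_2}{\lambda_1}\rho_1(\bm{y})-\rho_2\Big)\omega^2\hat{\bm{u}}(\bm{y},\omega)\,\mathrm{d}\bm{y}\Big|^2\mathrm{d}\omega .
\end{equation*}
We then split the $\omega$-line into a low-frequency part $|\Re\omega|<\xi_{\mathrm{max}}$ and a high-frequency tail. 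The integrand is symmetric under $\omega\mapsto-\overline{\omega}$ (the data are real), so it suffices to treat $\Re\omega\ge0$; this is consistent with the integrals from $0$ to $\xi_{\mathrm{max}}+\mathrm{i}\sigma$ appearing in $M_j$ and $N_j$.

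\emph{High-frequency tail.} For large $|\omega|$ we use only crude bounds. The $\Gamma$-integral of the test functions against $\bm{\Gamma}^\omega$ is bounded by $C(e^{-\sigma c_p^{-1}r}+e^{-\sigma c_s^{-1}r})|\omega|^{(d-3)/2}$, taken from the far-field expansion of $\bm{\Gamma}^\omega$. We also have $|D|=\mathcal{O}(\varepsilon^d)$ and $\|\hat{\bm{u}}\|_{L^2(D)}\lesssim\|\hat{\bm{u}}^i\|_{L^2(D)}$. Cauchy--Schwarz over $D$ then bounds the integrand by
\begin{equation*}
\varepsilon^d\,(e^{-2\sigma c_p^{-1}r}+e^{-2\sigma c_s^{-1}r})\,|\omega|^{4+d-3}\,\|\hat{\bm{u}}^i(\cdot,\omega)\|^2 .
\end{equation*}
Since $\bm{u}^i\in H_\sigma^{4+s}$, we can write $|\omega|^{d+1}=|\omega|^{2(4+s)}|\omega|^{d-7-2s}$. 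Integrating over $|\Re\omega|>\xi_{\mathrm{max}}$ then yields the remainder term $\varepsilon^d(\cdots)\mathcal{O}(|\omega_{\mathrm{max}}|^{-(2s-d+3)})$. The exact power should match once the weight from Lemma \ref{lem:regularity} is tracked, possibly with the loss of two powers coming from Theorem \ref{thm:regularity}.

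\emph{Low-frequency part.} Here $|\omega|\varepsilon\ll1$, so Lemma \ref{lem:us} gives $\hat{\bm{u}}=\hat{\bm{u}}^i+\mathcal{O}(\varepsilon^2|\omega|^2)$ on $D$. This is the Born approximation, and it produces the $\mathcal{O}(\varepsilon)$ relative error. On each $D_k$ we change variables $\bm{y}=\bm{y}^k+\varepsilon\bm{\zeta}$, which produces the factor $\varepsilon^d$. We also freeze $\mathbf{G}_\omega(\bm{z},\bm{y})\approx\mathbf{G}_\omega(\bm{z},\bm{y}^k)$, with an error of relative order $\varepsilon|\omega|$ because $\mathbf{G}_\omega$ is smooth in $\bm{y}$. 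For the term $k=j$ (with $\bm{z}$ near $D_j$), we apply \eqref{eq:Gomegai} with $\bm{\eta}^j=\sigma\bm{y}^j+\mathrm{i}\Re(\omega)(\bm{z}-\bm{y}^j)$. Squaring produces the factor $16^{d-2}\pi^2\varepsilon^{2d}|a_{d,1}e^{-\sigma c_p^{-1}r}\mathbf{K}_p+a_{d,2}e^{-\sigma c_s^{-1}r}\mathbf{K}_s|^2$, times $|\omega|^{4-(3-d)}=|\omega|^{d+1}$ times the $\bm{\zeta}$-integral; this is exactly $M_j$. For $k\neq j$, the separation gives $|\bm{z}-\bm{y}^k|\gtrsim L$, so \eqref{eq:Gomegae} supplies a factor $|\omega|^{-1}L^{-(d-1)/2}$. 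After squaring, the weight becomes $|\omega|^{4-2+d-3}\sim|\omega|^{d-1}$ up to constants, giving $N_j\,\mathcal{O}(L^{-(d-1)})$. Cross terms are absorbed into these two contributions by Cauchy--Schwarz. The final upper bound with $C_d$ follows from the boundedness of $I_0,I_2,i_0,i_2$ at the bounded argument $\bm{\eta}^j$, since $\bm{z}$ and $\bm{y}^j$ are close and $|\Re\omega|\le\xi_{\mathrm{max}}$.

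\emph{Far sampling points, and the main obstacle.} When $\bm{z}$ is far from $D$, every $k$ falls under \eqref{eq:Gomegae}, which yields the stated $\mathcal{O}(\mathrm{dist}(\bm{z},D)^{-(d-1)/2})$ decay, while the high-frequency tail is unchanged. The main obstacle is the uniformity of the error terms. We must combine the $r\to\infty$ expansion (error $|\omega|/r^{(d-1)/2}$, uniform only for $|\omega|\le|\omega_{\mathrm{max}}|$), the Born error (which needs $|\omega_{\mathrm{max}}|\varepsilon\ll1$), and the high-frequency tail, so that each error is controlled on its own frequency range. I would handle this by fixing the cutoff $\xi_{\mathrm{max}}$ first and applying each asymptotic only on its range of validity, always tracking the powers of $|\omega|$ explicitly.
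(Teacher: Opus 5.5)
Your outline has the same structure as the paper's proof. You use the Born approximation from Lemma \ref{lem:us}, rescale $\bm{y}=\bm{y}^k+\varepsilon\bm{\zeta}$ and freeze at the centres, then apply Lemma \ref{lem:Gomega} on $0<\Re\omega<\xi_{\max}$ to get the $M_j$ and $N_j$ terms (the paper's $\mathbf{Q}_1$). The tail gets a Cauchy--Schwarz bound (the paper's $\mathbf{Q}_2$), and \eqref{eq:Gomegae} handles all $k$ when $\bm{z}$ is far from $D$. Freezing $\mathbf{G}_\omega(\bm{z},\cdot)$ after the $\Gamma$-integration, rather than freezing $\bm{\Gamma}^\omega(\bm{x},\cdot)$ before it as the paper does, makes no difference.

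\textbf{The gap is in the high-frequency tail.} You use $\|\hat{\bm{u}}(\cdot,\omega)\|_{L^2(D)}\lesssim\|\hat{\bm{u}}^i(\cdot,\omega)\|_{L^2(D)}$ uniformly on $\Re\omega>\xi_{\max}$, and this is not available.
\begin{itemize}
\item Lemma \ref{lem:us} needs $|\omega|\varepsilon\to 0$, which fails on the tail.
\item The only large-frequency control the paper provides is the estimate inside the proof of Theorem \ref{thm:regularity}, $\|\hat{\bm{u}}^s(\cdot,\omega)\|_{(H^1(D))^d}\le C|\omega|^2\|\hat{\bm{u}}^i(\cdot,\omega)\|_{(L^2(D))^d}$. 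It loses two powers of $|\omega|$.
\end{itemize}
Your own bookkeeping shows the uniform bound is too optimistic. Combined with $|\omega|^{d+1}=|\omega|^{2(4+s)}|\omega|^{d-7-2s}$, it would give a tail of order $|\omega_{\max}|^{-(2s-d+7)}$, four powers better than the theorem. The stated rate would then already follow from $\bm{u}^i\in H^{2+s}_\sigma$.

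So your closing hedge about losing two powers is the actual argument, not an optional refinement. On the tail $|\omega|\ge|\omega_{\max}|>1$. Using the two-power loss, the integrand is bounded by
\[
C\varepsilon^d\big(e^{-2\sigma c_p^{-1}r}+e^{-2\sigma c_s^{-1}r}\big)|\omega|^{d+5}\|\hat{\bm{u}}^i(\cdot,\omega)\|^2_{(L^2(D))^d}
=C\varepsilon^d\big(e^{-2\sigma c_p^{-1}r}+e^{-2\sigma c_s^{-1}r}\big)|\omega|^{2(4+s)}|\omega|^{d-3-2s}\|\hat{\bm{u}}^i(\cdot,\omega)\|^2_{(L^2(D))^d}.
\]
Since $d-3-2s<0$, this gives exactly $\mathcal{O}(|\omega_{\max}|^{-(2s-d+3)})$. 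This is the paper's argument. There it is phrased as: $\bm{u}^i\in H^{4+s}_\sigma$ implies $\bm{u}\in H^{2+s}_\sigma$ by Theorem \ref{thm:regularity}, hence $|\omega_{\max}|^{2s}\int|\omega|^4\|\hat{\bm{u}}\|^2\,\mathrm{d}\omega\le C_1$. The $4$ in $H^{4+s}$ is the factor $\omega^2$ in \eqref{eq:usxomega} plus the two-power loss.

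\textbf{A smaller point, shared with the paper.} Cauchy--Schwarz does not absorb the cross term between the $k=j$ and $k\neq j$ contributions into the stated form.
\begin{itemize}
\item It yields a correction of relative size $\mathcal{O}(L^{-(d-1)/2})$ against the $M_j$ term, not $\mathcal{O}(L^{-(d-1)})$.
\item The $k\neq j$ terms involve $\bm{\zeta}$-integrals over $B_k$, so they are controlled by the analogues of $N_j$ for $k\neq j$, not by $N_j$ itself.
\end{itemize}
The paper simply squares its relative-error bracket and keeps only the quadratic $L$-term, so this does not separate your route from the paper's. You should still not claim the absorption is exact.
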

        
        \begin{proof}
            It suffices to present the proof in the three-dimensional case, since the two-dimensional analogue follows by a similar argument. By \eqref{eq:usxomega} and Lemma \ref{lem:us}, the scattered wave can be represented as
            \begin{equation*}
            \hat{\bm{u}}^s(\bm{x},\omega)=\omega^2\int_{D}\bm{\Gamma}^\omega(\bm{x},\bm{y}) \big(\frac{\lambda_2}{\lambda_1}\rho_1(\bm{y})-\rho_2\big) \left( \hat{\bm{u}}^i(\bm{y},\omega) +\mathcal{O}(|\omega|^2\varepsilon^2)\right)\mbox{d}\bm{y}.
            \end{equation*}
            Applying a Taylor expansion yields
            \begin{equation*}
            \begin{aligned}
            &\omega^2\int_{D}\bm{\Gamma}^\omega(\bm{x},\bm{y}) \big(\frac{\lambda_2}{\lambda_1}\rho_1(\bm{y})-\rho_2\big) \hat{\bm{u}}^i(\bm{y},\omega) \,\mbox{d}\bm{y}\\
            =&\omega^2\varepsilon^3\sum_{k=1}^N\int_{B_k} \bm{\Gamma}^\omega(\bm{x},\bm{y}^k+\varepsilon\bm{\zeta}) \big(\frac{\lambda_2}{\lambda_1}\rho_1(\bm{y}^k+\varepsilon\bm{\zeta})-\rho_2\big) \hat{\bm{u}}^i(\bm{y}^k+\varepsilon\bm{\zeta},\omega) \,\mbox{d}\bm{\zeta}\\
            =&\omega^2\varepsilon^3\sum_{k=1}^N \bm{\Gamma}^\omega(\bm{x},\bm{y}^k) \Big(\int_{B_k} \big(\frac{\lambda_2}{\lambda_1}\rho_1(\bm{y}^k+\varepsilon\bm{\zeta})-\rho_2\big) \hat{\bm{u}}^i(\bm{y}^k+\varepsilon\bm{\zeta},\omega) \,\mbox{d}\bm{\zeta}+\mathcal{O}(\varepsilon)\Big).
            \end{aligned}
            \end{equation*}
            Assume that $\|\bm{z}-\bm{y}^j\|<L/2$. Using Lemma \ref{lem:Gomega} together with the preceding two identities, we obtain
            \begin{equation*}
            \begin{aligned}
            &\Big|\int_{\Gamma}\sum_{\tau=p,s} \bm{\Gamma}_\tau(\bm{x}-\bm{z}) \frac{e^{-\rmi\Re (\omega) c_\tau^{-1}|\bm{x}-\bm{z}|}}{|\bm{x}-\bm{z}|} \hat{\bm{u}}^s(\bm{x},\omega) \,\mbox{d}s(\bm{x})\Big|\\
            =& \omega^2\varepsilon^3\Big|\sum_{k=1}^N\int_{\Gamma}\sum_{\tau=p,s} \bm{\Gamma}_\tau(\bm{x}-\bm{z}) \frac{e^{-\rmi\Re (\omega) c_\tau^{-1}|\bm{x}-\bm{z}|}}{|\bm{x}-\bm{z}|} \bm{\Gamma}^\omega(\bm{x},\bm{y}^k)\,\mbox{d}s(\bm{x}) \\
            &\qquad \times\ \Big(\int_{B_k} \big(\frac{\lambda_2}{\lambda_1}\rho_1(\bm{y}^k+\varepsilon\bm{\zeta})-\rho_2\big) \hat{\bm{u}}^i(\bm{y}^k+\varepsilon\bm{\zeta},\omega) \,\mbox{d}\bm{\zeta}+\mathcal{O}(\varepsilon|\omega|)\Big)\Big|\\
            =& 4\pi \omega^2\varepsilon^3\Big| \big( a_{3,1} e^{-\sigma c_p^{-1}r}\mathbf{K}_p(\bm{\eta}^j)+ a_{3,2} e^{-\sigma c_s^{-1}r}\mathbf{K}_s(\bm{\eta}^j)\big) \\
            & \qquad\times \int_{B_j} \big(\frac{\lambda_2}{\lambda_1}\rho_1(\bm{y}^j+\varepsilon\bm{\zeta})-\rho_2\big) \hat{\bm{u}}^i(\bm{y}^j+\varepsilon\bm{\zeta},\omega) \,\mbox{d}\bm{\zeta} \Big| \; \Big\{1+\mathcal{O}\big(\frac{|\omega|}{r}\big) +\mathcal{O}\big(\frac{1}{|\omega|L}\big) +\mathcal{O}(\varepsilon)\Big\},
            \end{aligned}
            \end{equation*}
            where $\bm{\eta}^j =-\Re(\omega)(\bm{z}-\bm{y}^j)+\mathrm{i}\sigma \bm{y}^j $ as $r\rightarrow\infty$, $\varepsilon\rightarrow 0$, and $L\rightarrow \infty$. On the one hand,
            \begin{equation}\label{eq:Q1}
            \begin{aligned}
            &\mathbf{Q}_1(\bm{z}):=\frac{1}{\pi}\int_{0}^{\xi_{\text{max}}+\mathrm{i}\sigma}\Big| \int_{\Gamma}\sum_{\tau=p,s} \bm{\Gamma}_\tau(\bm{x}-\bm{z}) \frac{e^{-\mathrm{i}\Re (\omega) c_\tau^{-1}|\bm{x}-\bm{z}|}}{|\bm{x}-\bm{z}|} \hat{\bm{u}}^s(\bm{x},\omega)\, \mbox{d}s(\bm{x})\Big|^2\mbox{d}\omega\\
            = &16\pi^2 \varepsilon^6 \big| a_{3,1} e^{-\sigma c_p^{-1}r}\mathbf{K}_p(\bm{\eta}^j)+ a_{3,2} e^{-\sigma c_s^{-1}r}\mathbf{K}_s(\bm{\eta}^j)\big|^2 \Big\{M_j\Big(1+\mathcal{O}\big(\frac{| \omega_\mathrm{max} | }{r}\big) +\mathcal{O}(\varepsilon)\Big)+ N_j\mathcal{O}\big(\frac{1}{L^2}\big)\Big\},
            \end{aligned}
            \end{equation}
            where $M_j$ and $N_j$ are defined in \eqref{eq:MjNj}.
            The boundedness of $M_j$ and $N_j$ follows from the regularity assumption $\bm{u}^i(\bm{x},t)\in H^{4+s}_\sigma(\mathbb{R},(L^2(D))^3)$ for $s>0$. On the other hand, define
            \begin{equation*}
            \begin{aligned}
            &\mathbf{Q}_2(\bm{z}):=\frac{1}{\pi}\int_{\xi_{\text{max}}+\mathrm{i}\sigma} ^{+\infty+\text{i}\sigma}\Big| \int_{\Gamma}\sum_{\tau=p,s} \bm{\Gamma}_\tau(\bm{x}-\bm{z}) \frac{e^{-\mathrm{i}\Re (\omega) c_\tau^{-1}|\bm{x}-\bm{z}|}}{|\bm{x}-\bm{z}|} 
            \hat{\bm{u}}^s(\bm{x},\omega)\, \mbox{d}s(\bm{x})\Big|^2\mbox{d}\omega\\
            =&\frac{1}{\pi}\int_{\xi_{\text{max}}+\mathrm{i}\sigma} ^{+\infty+\mathrm{i}\sigma}\Big| \omega^2\int_{D}\int_{\Gamma}\sum_{\tau=p,s} \bm{\Gamma}_\tau(\bm{x}-\bm{z}) \frac{e^{-\mathrm{i}\Re (\omega) c_\tau^{-1}|\bm{x}-\bm{z}|}}{|\bm{x}-\bm{z}|} \bm{\Gamma}^\omega(\bm{x},\bm{y}) \Big(\frac{\lambda_2}{\lambda_1}\rho_1(\bm{y})-\rho_2\Big) \hat{\bm{u}}(\bm{y},\omega) \,\mbox{d}s(\bm{x})\,\mbox{d}\bm{y}\Big|^2\mbox{d}\omega. 
            \end{aligned}
            \end{equation*}
            Applying the Cauchy--Schwarz inequality yields
            \begin{equation*}
            \begin{aligned}
            \mathbf{Q}_2(\bm{z})\leq &C\int_{\xi_{\text{max}}+\text{i}\sigma}^{+\infty+\text{i}\sigma} |\omega|^4 \int_D| e^{-\sigma {c_p^{-1}r}}+ e^{-\sigma {c_s^{-1}r}}|^2\,\text{d}\bm{y}\int_D \Big| \Big(\frac{\lambda_2}{\lambda_1}\rho_1(\bm{y})-\rho_2\Big) \hat{\bm{u}}(\bm{y},\omega)\Big|^2 \text{d}\bm{y}\, \mbox{d}\omega\\
            \leq &C\varepsilon^3( e^{-2\sigma {c_p^{-1}r}}+ e^{-2\sigma {c_s^{-1}r}}) \Big\|\frac{\lambda_2}{\lambda_1}\rho_1(\bm{y})-\rho_2\Big\|^2_{C(D)^3} \int_{\xi_{\text{max}}+\text{i}\sigma}^{+\infty+\text{i}\sigma} |\omega|^4\int_D\hat{\bm{u}}^2(\bm{y},\omega)\,\mbox{d}\bm{y}\,\mbox{d}\omega.
            \end{aligned}
            \end{equation*}
            By Theorem \ref{thm:regularity}, the condition $\bm{u}^i(\bm{x},t)\in H^{4+s}_\sigma(\mathbb{R},(L^2(D))^3)$ implies $\bm{u}(\bm{x},t)\in H^{2+s}_\sigma(\mathbb{R},(H^1(D))^3)$. Consequently, there exists a constant $C_1>0$ such that
            \begin{equation*}
            |\omega_\mathrm{max}|^{2s}\int_{\xi_\mathrm{max}+\text{i}\sigma}^{+\infty+\text{i}\sigma} |\omega|^4 \|\hat{\bm{u}}(\bm{y},\omega)\|^2_{(L^2(D))^3}\,\mbox{d}\omega\leq \int_{\xi_\mathrm{max}+\text{i}\sigma}^{+\infty+\text{i}\sigma} |\omega|^{2(2+s)} \|\hat{\bm{u}}(\bm{y},\omega)\|^2_{(H^1(D))^3}\,\mbox{d}\omega\leq C_1.
            \end{equation*}
            Therefore,
            \begin{equation*}
            \mathbf{Q}_2(\bm{z})= \varepsilon^3 (e^{-2\sigma {c_p^{-1}r}}+e^{-2\sigma {c_s^{-1}r}})\mathcal{O}\big(\frac{1}{|\omega_\mathrm{max}|^{2s}}\big), \quad s>0,
            \end{equation*}
            as $|\omega_\mathrm{max}|^{2s}\rightarrow \infty$. Combining \eqref{eq:Q1} with the estimate for $\mathbf{Q_2}(\bm{z})$ and using $\mathcal{I}(\bm{z})=\mathbf{Q}_1(\bm{z})+\mathbf{Q}_2(\bm{z})$, we obtain
            \begin{equation*}
            \begin{aligned}
            \mathcal{I}(\bm{z})\leq\varepsilon^3 \big( e^{-2\sigma c_p^{-1}r}+ e^{-2\sigma c_s^{-1}r}\big) 
            \Big\{\varepsilon^3 M_j\Big(1+\mathcal{O} \big(\frac{|\omega_\mathrm{max}|}{r}\big) +\mathcal{O}(\varepsilon)\Big)+ \varepsilon^3 N_j\mathcal{O}\big(\frac{1}{L^2}\big)+\mathcal{O} \big(\frac{1}{|\omega_{\text{max}}|^{2s}}\big)\Big\},
            \end{aligned}
            \end{equation*}
            as $r\rightarrow\infty$, $\varepsilon\rightarrow0$, $L\rightarrow\infty$, and $ |\omega_\mathrm{max}|^{2s}\rightarrow \infty$. The estimate for a sampling point $\bm{z}$ located far from the domain $D$ follows by a similar argument, yielding
            \begin{equation*}
            \begin{aligned}
            \mathcal{I}(\bm{z})=&\varepsilon^3 \big( e^{-2\sigma c_p^{-1}r}+  e^{-2\sigma c_s^{-1}r}\big) \mathcal{O}\Big(\frac{1}{\text{dist}(\bm{z},D)}\Big)\Big\{\varepsilon^3\Big(1+\mathcal{O}\big(\frac{|\omega_\mathrm{max}|}{r}\big) +\mathcal{O}(\varepsilon)\Big)+\mathcal{O} \big(\frac{1}{|\omega_{\text{max}}|^{2s}}\big)\Big\},
            \end{aligned}
            \end{equation*}
            as $r\rightarrow\infty$, $\varepsilon\rightarrow0$, $|\omega_\mathrm{max}|^{2s}\rightarrow \infty$, and $\text{dist}(\bm{z},D)\rightarrow\infty$, where $\text{dist}(\bm{z},D):=\min_{1\leq j \leq N}|\bm{z}-\bm{y}^j|$.
            
            This completes the proof.
        \end{proof}   
        \begin{rem}
            Theorem \ref{thm:asymptotic} shows that the proposed imaging functional attains a local maximum when the sampling point $\bm{z}$ coincides with a scatterer $D_j$ and decays rapidly as $\bm{z}$ moves away from $D_j$. This property provides a natural mechanism for identifying the scatterer locations through the functional $\mathcal{I}(\bm{z})$. Furthermore, the asymptotic expansion of $\mathcal{I}(\bm{z})$ clarifies the conditions required for accurate reconstruction. Specifically, controlling the higher-order term $\mathcal{O}(|\omega_\mathrm{max}|/r)$ requires a sufficiently large observation radius $r$. The term $\varepsilon|\omega_\mathrm{max}|$ indicates that the scatterer size parameter $\varepsilon$ must be significantly smaller than the reciprocal of the maximum frequency, namely $\varepsilon\ll 1/|\omega_\mathrm{max}|$. In addition, the term $\mathcal{O}(1/L)$ requires a sufficiently large minimum separation distance $L$ between individual scatterers. In summary, high-resolution imaging requires the scatterers to be well separated, small relative to the probing wavelength, and observed from a sufficiently large distance.
        \end{rem}

\section{Numerical Examples}
In this section, we present several numerical examples to illustrate the effectiveness of the proposed imaging functional. To obtain the scattered-field data, we solve the forward problem \eqref{eq:system} using the finite element method.
The unbounded exterior domain is truncated by imposing an absorbing boundary condition. The measurement surface is chosen as a circle, or as a sphere in the three-dimensional case, with radius $r$.

Assume that the elastic wave is generated by a point source located at $\bm{y}\in \mathbb{R}^d$. The incident wave $u^i$ satisfies the following equation:
\begin{equation*}
\mathcal{L}_{\lambda_2,\mu_2}\bm{u}^i(\bm{x},t)-\rho_2\frac{\partial^2 \bm{u}^i(\bm{x},t)}{\partial t^2}=\bm{d}\chi(t)\delta(\bm{x}-\bm{y}),\quad (\bm{x},t)\in \mathbb{R}^d\times\mathbb{R}_+,
\end{equation*}
where $\bm{d}\in \mathbb{R}^d,|\bm{d}|=1$ is a unit direction vector, $\chi$ is a causal temporal signal satisfying $\chi(t)\equiv0$ for $t<0$, and $\delta(\bm{x}-\bm{y})$ is the Dirac delta function. In what follows, we use the Ricker wavelet
\begin{equation}\label{eq:Ricker}
\chi(t)=(1-2a(t-t_0)^2) e^{-a(t-t_0)^2},
\end{equation}
as the source modulation function for the incident field, where $a=2\pi^2 f_0^2$, $f_0$ is the peak frequency of the source, and $t_0$ is the time delay.

\begin{figure}[H]
    \centering
    \begin{minipage}{0.99\textwidth}
        \centering
     \subfigure[computational geometry]{
            \includegraphics[width=0.3\textwidth]{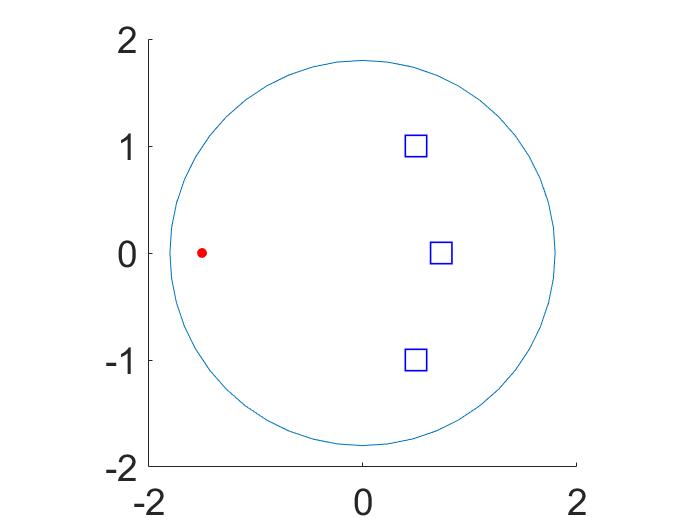}} 
        \subfigure[Ricker wavelet]{
            \includegraphics[width=0.3\textwidth]{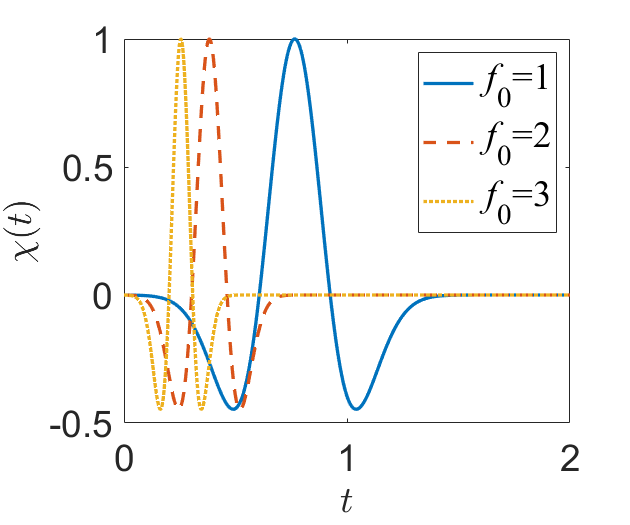}}
        \subfigure[Fourier spectrum of Ricker wavelet]{
            \includegraphics[width=0.3\textwidth]{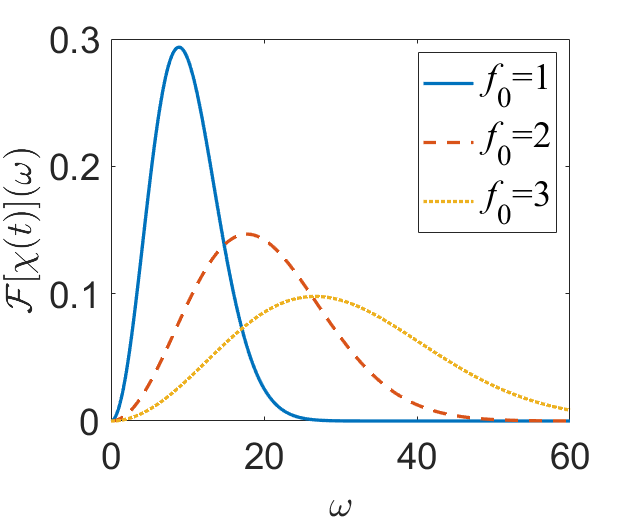}}
        \caption{\label{fig:Ricker}Computational geometry, Ricker wavelet, and corresponding Fourier spectra for different peak frequencies $f_0$.}
    \end{minipage}
    \hfill
    \begin{minipage}{0.99\textwidth}
        \centering
        \subfigure[TFM, $f_0=1$]{
            \includegraphics[width=0.3\textwidth]{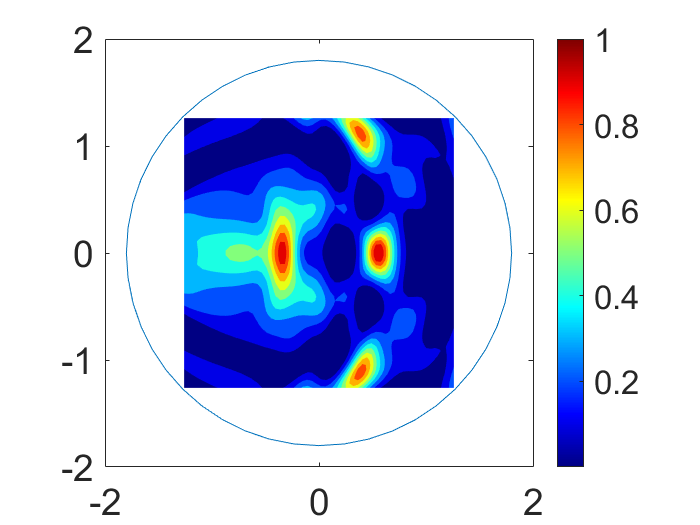}}
        \subfigure[TFM, $f_0=2$]{
            \includegraphics[width=0.3\textwidth]{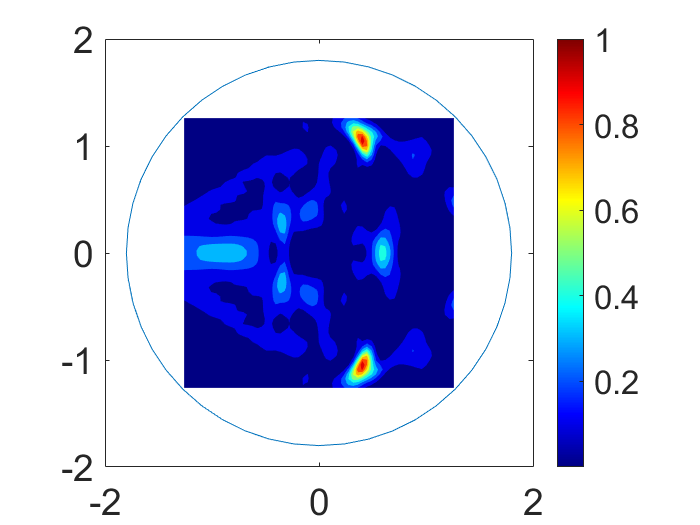}}
        \subfigure[TFM, $f_0=3$]{
            \includegraphics[width=0.3\textwidth]{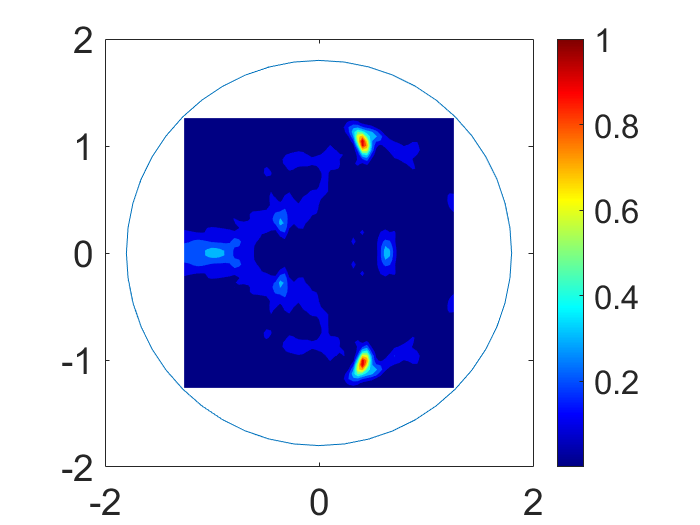}}\\
        \subfigure[DSM, $f_0=1$]{
            \includegraphics[width=0.3\textwidth]{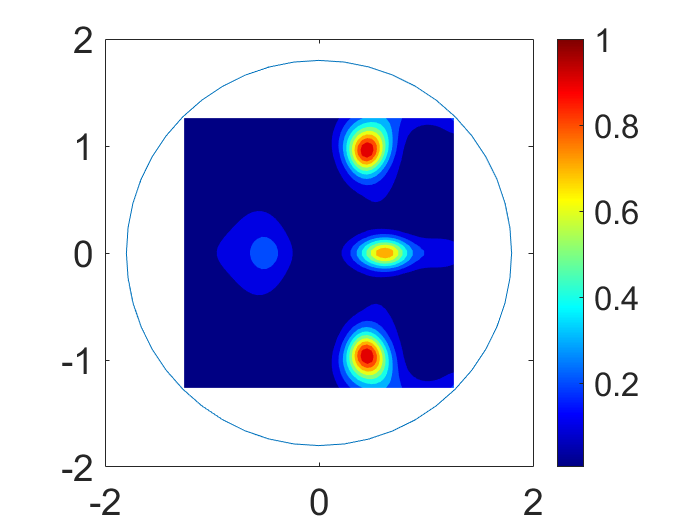}}
        \subfigure[DSM, $f_0=2$]{
            \includegraphics[width=0.3\textwidth]{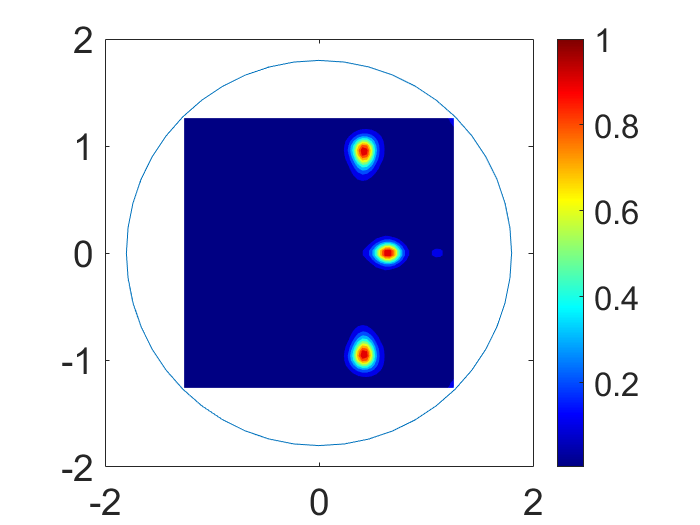}}
        \subfigure[DSM, $f_0=3$]{
            \includegraphics[width=0.3\textwidth]{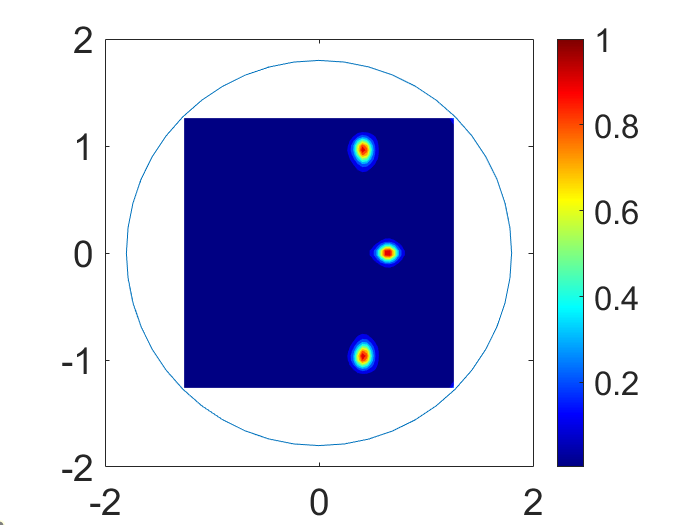}}
        \caption{\label{fig:frequency}Reconstruction of three small square scatterers using the total focusing method (TFM) and the time-domain direct sampling method (DSM) for different peak frequencies $f_0$.}
    \end{minipage}
\end{figure}

\subsection{Single Incident Source in Two Dimensions}\label{sec:Ricker}
In the two-dimensional setting, the measurement circle is uniformly discretized into $n$ observation points.
Furthermore, the indicator function \eqref{eq:time-indicator} is truncated at the terminal time $T$. The time interval $[0,T]$ is uniformly discretized into $m$ time steps with step size $\Delta t=T/m$, yielding the partition $t_0=0<t_1<\ldots<t_m=T$ and $t_k={kT}/{m}$. Thus, the indicator function is approximated by
\begin{equation}\label{eq:disIz}
\mathcal{I}(\bm{z})=\frac{T}{m}\sum_{i=1}^{m}\Big|\frac{2^{d-1}\pi r^{d-1}}{n}\sum_{j=1}^{n} \sum_{\tau=p,s} \frac{ e^{-\sigma(t_i+c_\tau^{-1}|\bm{x}^j-\bm{z}|)}}{|\bm{x}^j-\bm{z}|^{\frac{d-1}{2}}} \bm{\Gamma}_\tau(\bm{x}^j-\bm{z})\bm{u}^s(\bm{x}^j, t_i+c_\tau^{-1}|\bm{x}^j-\bm{z}|) \Big|^2,\quad \bm{z}\in \widetilde{D}.
\end{equation}

\begin{figure}[!ht]
\centering
\begin{minipage}{0.99\textwidth}
        \centering
\subfigure[SNR$=0$\,dB]{
\includegraphics[width=0.3\textwidth]{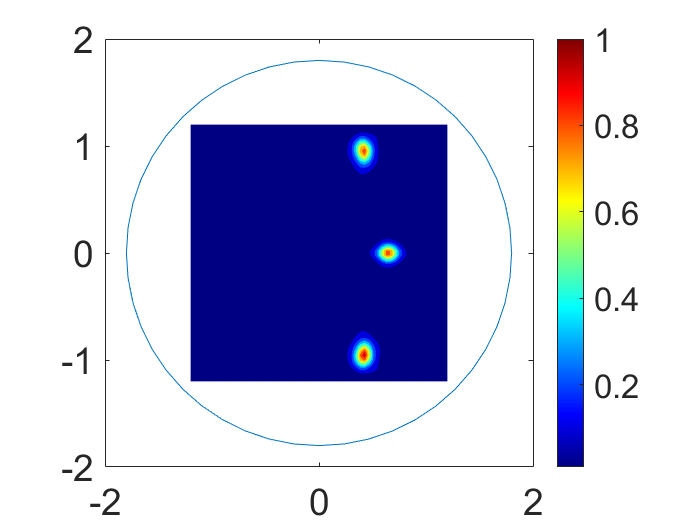}}
\subfigure[SNR$=-3$\,dB]{
\includegraphics[width=0.3\textwidth]{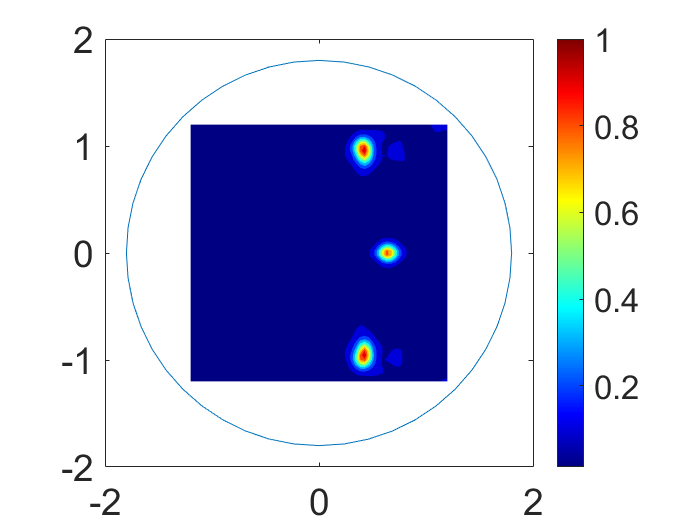}}
\subfigure[SNR$=-6$\,dB]{
\includegraphics[width=0.3\textwidth]{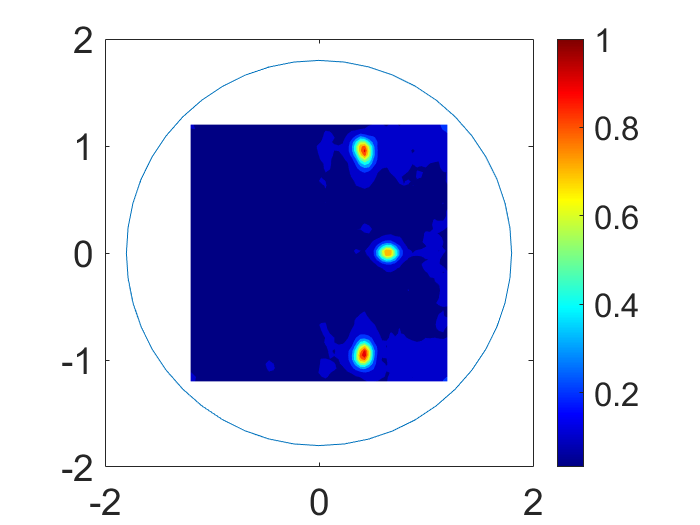}}
\subfigure[SNR$=-9$\,dB]{
\includegraphics[width=0.3\textwidth]{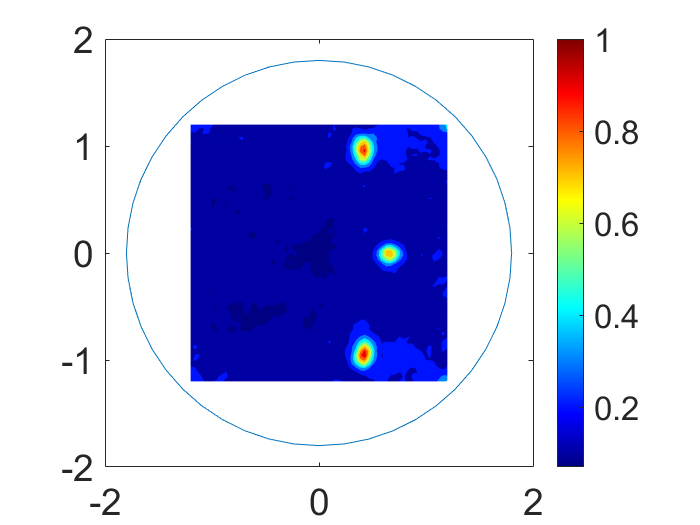}}
\subfigure[SNR$=-12$\,dB]{
\includegraphics[width=0.3\textwidth]{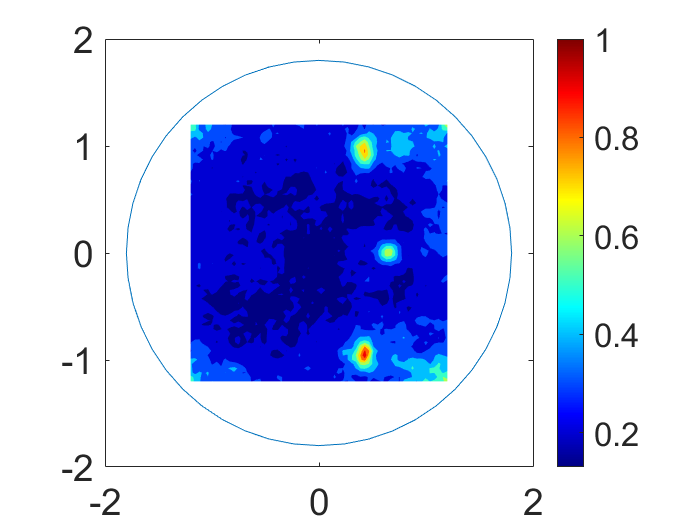}}
\subfigure[SNR$=-15$\,dB]{
\includegraphics[width=0.3\textwidth]{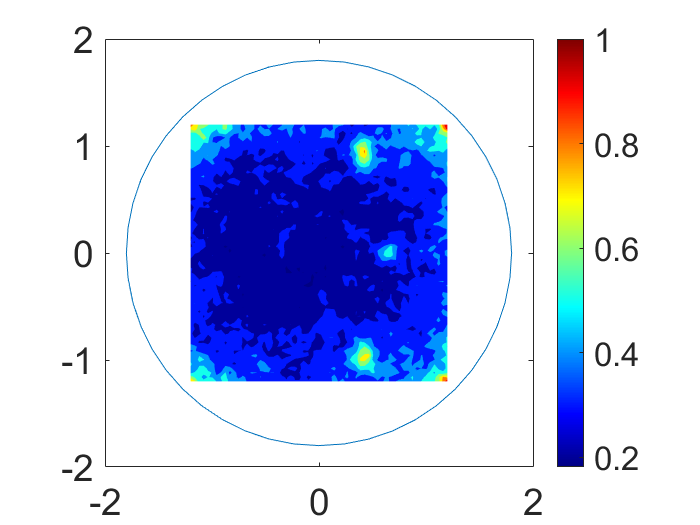}}
\caption{\label{fig:noise}Reconstruction of three small square scatterers under different noise levels $\delta$.}
    \end{minipage}
    \hfill
    \begin{minipage}{0.99\textwidth}
        \centering
\subfigure[$\theta\in(\frac{\pi}{4},\frac{7\pi}{4})$]{
\includegraphics[width=0.3\textwidth]{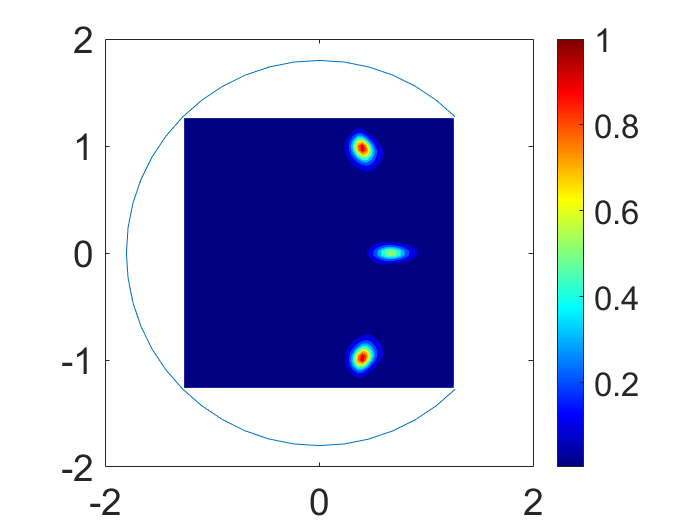}}
\subfigure[$\theta\in(\frac{\pi}{2},\frac{3\pi}{2})$]{
\includegraphics[width=0.3\textwidth]{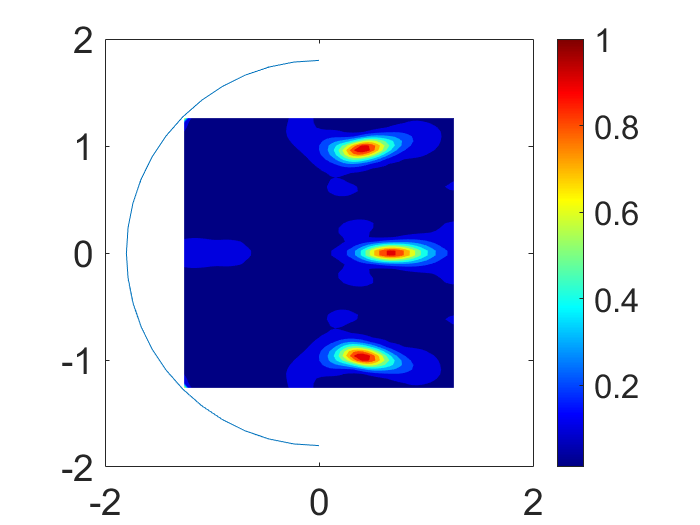}}
\subfigure[$\theta\in(\frac{3\pi}{4},\frac{5\pi}{4})$]{
\includegraphics[width=0.3\textwidth]{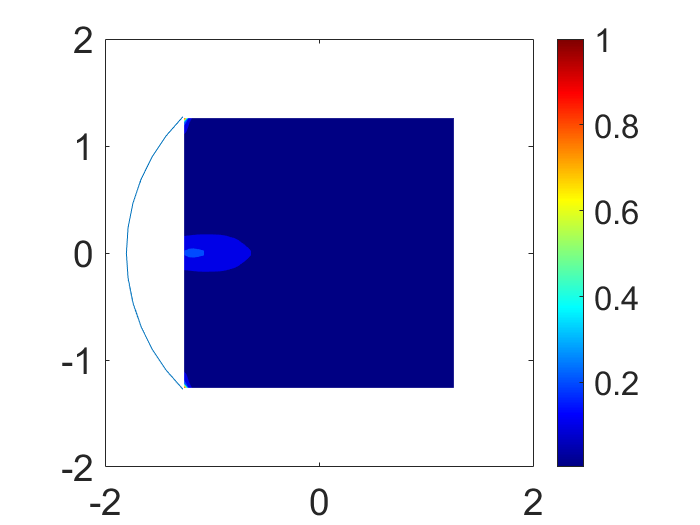}}\\
\subfigure[$\theta\in(-\frac{3\pi}{4},\frac{3\pi}{4})$]{
\includegraphics[width=0.3\textwidth]{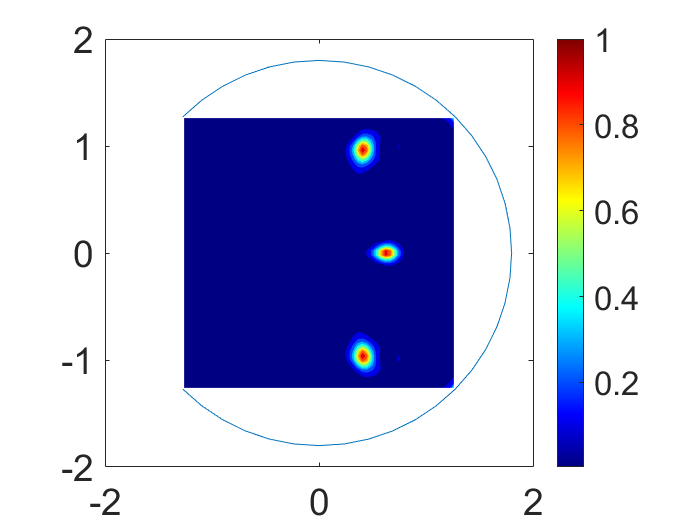}}
\subfigure[$\theta\in(-\frac{\pi}{2},\frac{\pi}{2})$]{
\includegraphics[width=0.3\textwidth]{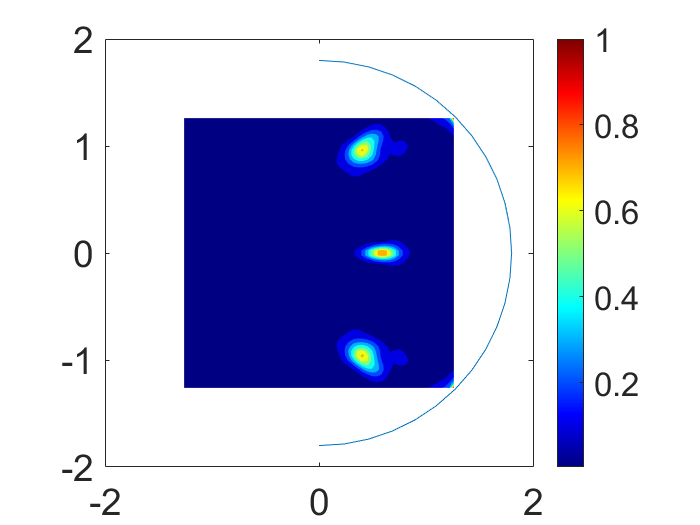}}
\subfigure[$\theta\in(-\frac{\pi}{4},\frac{\pi}{4})$]{
\includegraphics[width=0.3\textwidth]{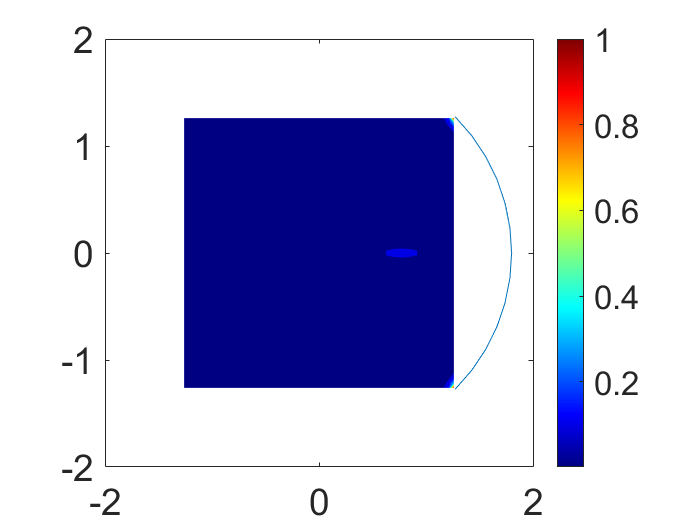}}
\caption{\label{fig:aperture}Reconstruction of three small square scatterers with limited-aperture data.}
    \end{minipage}
\end{figure}

For the first numerical example, the incident source is located at $(-1.5,0)$. Figure~\ref{fig:Ricker}(a) illustrates the computational configuration, where the red point denotes the incident source, the three small squares represent the scatterers, and the circle indicates the observation surface. The observation radius is set to $r=1.8$, and $n=48$ uniformly distributed observation points are used. The terminal time is set to $T=9$, and the time interval is uniformly discretized into $m=450$ time steps. The sampling domain is taken to be
$\widetilde{D}=[-1.26,1.26]\times[-1.26,1.26]$.
Furthermore, Figures~\ref{fig:Ricker}(b)--(c) show the Ricker wavelet and its corresponding Fourier spectra for different peak frequencies $f_0$, respectively.

To demonstrate the advantages of the proposed approach, we compare the time-domain direct sampling method with the total focusing method. The imaging functional of the total focusing method \cite{s25154550} is given by
\begin{equation*}
\mathcal{I}_{\mathrm{TFM}}(\bm{z})= \Big|\int_{ \Gamma} \bm{u}^s(\bm{x}, t_0+c_p^{-1}(|\bm{x}-\bm{z}|+|\bm{y}^0-\bm{z}|))\, \mathrm{d}\bm{x}\Big|,\quad \bm{z}\in \widetilde{D},
\end{equation*}
where $t_0$ is the time delay of the Ricker wavelet $\chi(t)$ and $\bm{y}^0$ is the location of the incident point source.
Figure~\ref{fig:frequency} presents the reconstruction results for three point-like scatterers obtained using the total focusing method (TFM) and the proposed time-domain direct sampling method (DSM) with incident waves of different peak frequencies $f_0$. Both imaging functionals attain local maxima at the scatterer locations, and the reconstruction quality improves as the peak frequency $f_0$ increases.
A comparison of the two methods shows that the proposed time-domain direct sampling method achieves higher imaging resolution and characterizes the scatterers more accurately than the total focusing method.

Next, we demonstrate the stability of the proposed method. Additive Gaussian noise corresponding to a prescribed signal-to-noise ratio (SNR) is added to the time-dependent measurements, i.e.,
\begin{equation*}
\bm u_{\delta}^s
=\bm u^s
+
\sqrt{\frac{P_{\mathrm{signal}}(\bm u^s)}{10^{\mathrm{SNR}/10}}}\, Z,
\end{equation*}
 where $P_{\mathrm{signal}}(\bm u^s)=\mathbb{E}(|\bm u^s|^2)$ denotes the signal power, $\mathbb{E}$ is the expectation operator over time $t$, and $Z\sim \mathcal{N}(0,1)$ is a standard Gaussian random variable.
Figure~\ref{fig:noise} shows the reconstruction results under various noise levels, with the signal-to-noise ratio ranging from $0$ dB to $-15$ dB. The proposed method achieves high-resolution reconstructions even under severe noise contamination, demonstrating its robustness with respect to measurement noise.
Furthermore, we investigate the performance of the proposed method under limited-aperture measurements. Figure~\ref{fig:aperture} presents the reconstructed images of three small square scatterers for observation surfaces with different apertures. The reconstruction quality gradually deteriorates as the aperture decreases.

\subsection{Multiple Incident Sources in Two Dimensions}

In this example, we consider the reconstruction of an extended scatterer. The scatterer has a kite-shaped geometry, as shown in Figure~\ref{subfig:kite}, and is parameterized by
\begin{equation*}
\bm{x}(\phi)=(0.4\cos(\phi)  + 0.26\cos(2\phi),0.6\sin(\phi)),\quad \phi\in[0,2\pi].
\end{equation*}
We first evaluate the proposed imaging functional \eqref{eq:disIz} using the scattered field generated by a single incident source. As shown in Figure~\ref{subfig:single}, only the portion of the scatterer closest to the source is reconstructed. To overcome this limitation, we introduce an imaging functional that incorporates multiple incident sources:
\begin{equation}\label{eq:disIz2}
\begin{aligned}
\widetilde{\mathcal{I}}(\bm{z})=\int_{-\infty}^{\infty}\int_{\widetilde{\Gamma}} \Big|\int_{ \Gamma} \sum_{\tau=p,s} \frac{ e^{-\sigma(t+c_\tau^{-1}|\bm{x}-\bm{z}|)}}{\sqrt{|\bm{x}-\bm{z}|}} \bm{\Gamma}_\tau(\bm{x}-\bm{z})\bm{u}^s(\bm{x}, \tilde{\bm{x}},t+c_\tau^{-1}|\bm{x}-\bm{z}|)  \,\mbox{d}s(\bm{x})\Big|^2\mathrm{d}\tilde{\bm{x}}\,\mbox{d}t,\quad \bm{z}\in \widetilde{D},
\end{aligned}
\end{equation}
where $\widetilde{\Gamma}$ denotes the surface on which the incident sources are placed. We then discretize $\widetilde{\Gamma}$ into a uniformly distributed array of incident sources. The discretization of the observation surface $\Gamma$ and the time interval follows the same strategy as that used for the indicator function \eqref{eq:disIz}.

The reconstructions generated by the discretized indicator $\widetilde{\mathcal{I}}(\bm{z})$ are presented in Figures~\ref{subfig:8}--\ref{subfig:32}. The source radius is set to $\tilde{r}=1.5$, and the number of incident sources is set to $\tilde{n}=8,16,24,32$, respectively. All other parameters are chosen as in Section~\ref{sec:Ricker}. The black points arranged in a circle indicate the positions of the incident sources, while those enclosed by a small red circle are the active sources for each illumination. As shown in Figures~\ref{subfig:8}--\ref{subfig:32}, the reconstruction quality improves as the number of incident sources increases, resulting in a more complete recovery of the scatterer. Furthermore, the portions of the scatterer facing the incident sources are reconstructed more accurately, which is consistent with the observation in Figure~\ref{subfig:single}.

\FloatBarrier

\begin{figure}[H]
\centering
\subfigure[\label{subfig:kite} computational geometry]{
\includegraphics[width=0.3\textwidth]{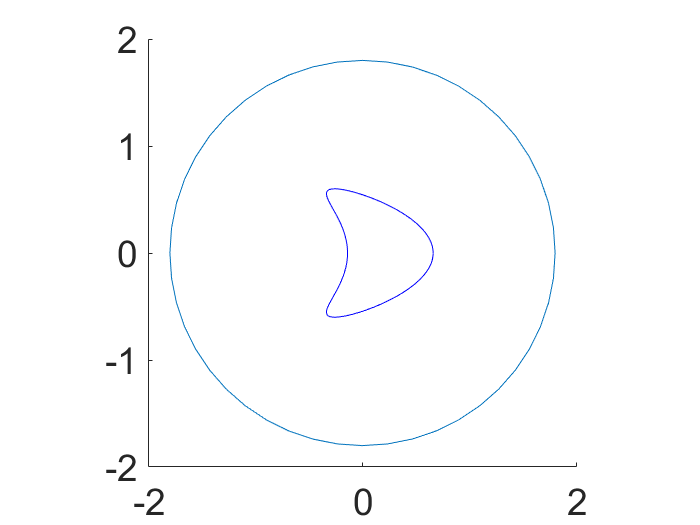}}
\subfigure[\label{subfig:single} single incident source]{
\includegraphics[width=0.3\textwidth]{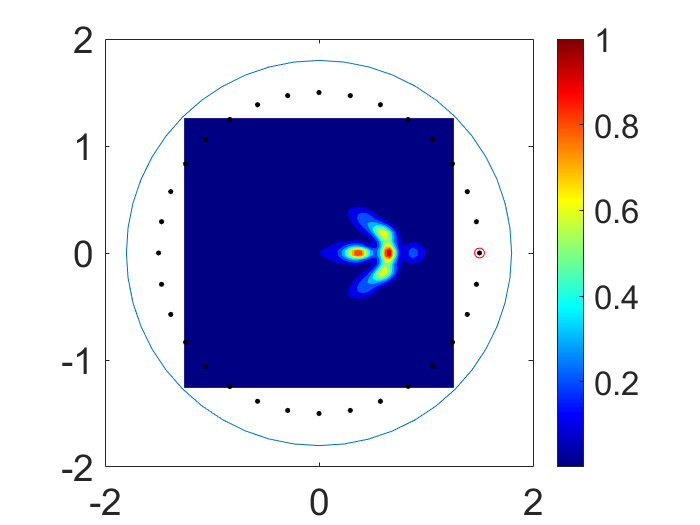}}
\subfigure[\label{subfig:8} 8 incident sources]{
\includegraphics[width=0.3\textwidth]{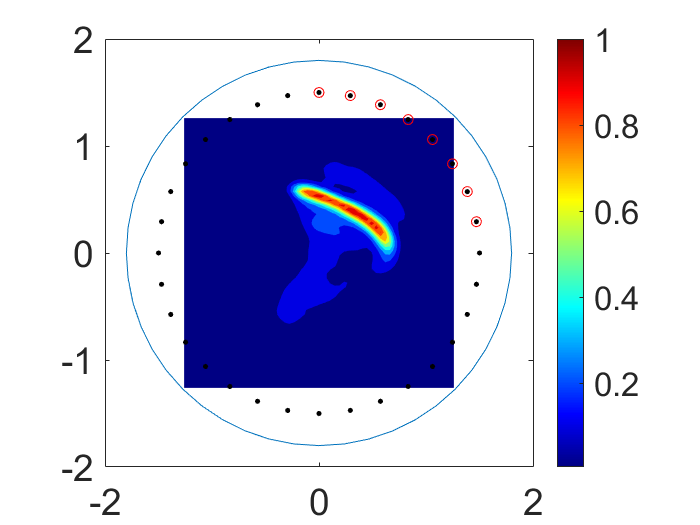}}\\
\subfigure[\label{subfig:16} 16 incident sources]{
\includegraphics[width=0.3\textwidth]{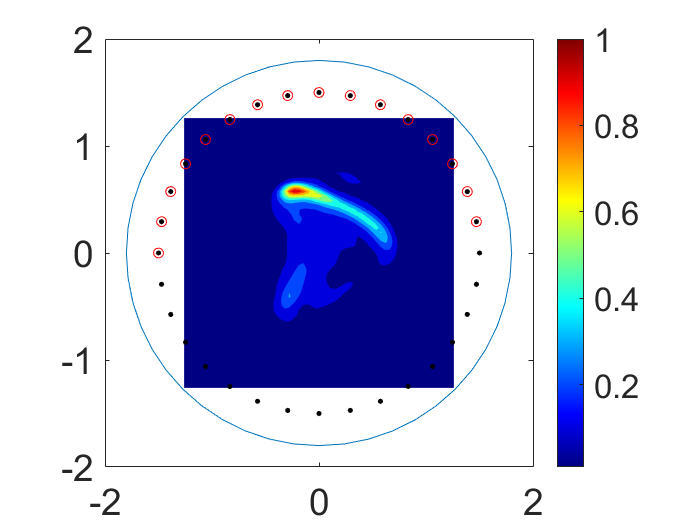}}
\subfigure[\label{subfig:24} 24 incident sources]{
\includegraphics[width=0.3\textwidth]{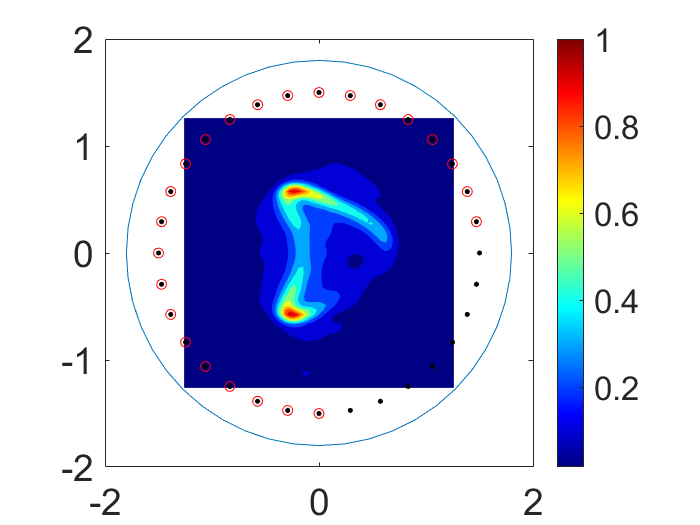}}
\subfigure[\label{subfig:32} 32 incident sources]{
\includegraphics[width=0.3\textwidth]{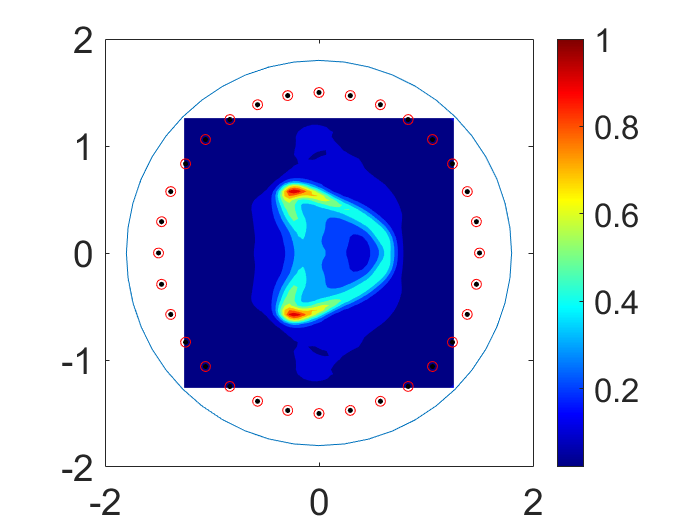}}
\caption{\label{fig:re-kite}Reconstruction of a kite-shaped scatterer using a single incident source and multiple incident sources.}
\end{figure}

\subsection{Single Incident Source in Three Dimensions}
In the final example, we investigate the identification of two small scatterers in three-dimensional space using a single incident source. The point source is located at $(-1.5,0,0)$, as indicated by the red marker in Figure~\ref{subfig:geo3D}. The two small cubes represent the scatterers under investigation. The spherical surface denotes the observation domain, and the discretized points on it correspond to the measurement locations. For improved visualization, projections of the scatterers onto the planes $z_1=1$, $z_2=1$, and $z_3=-1$ are also presented in Figure~\ref{subfig:geo3Dproj}.

The reconstruction results are shown in Figures~\ref{subfig:slice3D} and \ref{subfig:iso3D}. The sampling domain $\widetilde{D}$ is taken as the cube $[-1,1]\times[-1,1]\times[-1,1]$. In Figure~\ref{subfig:slice3D}, the indicator function $\mathcal{I}(\bm{z})$ is visualized on the slices $z_1=0,z_2=\pm0.5$ and $z_3=\pm0.5$, which clearly reveal the locations of its peak values. Furthermore, Figure~\ref{subfig:iso3D} displays the isosurfaces of $\mathcal{I}(\bm{z})$ together with its projections onto the planes $z_1=1,z_2=1$, and $z_3=-1$. A comparison between the true scatterers in Figure~\ref{fig:re3D}(b) and the corresponding reconstruction in Figure~\ref{fig:re3D}(d) demonstrates that the proposed direct sampling method also yields satisfactory reconstructions in the three-dimensional setting.

\begin{figure}[!ht]
\centering
\subfigure[computational geometry\label{subfig:geo3D}]{
\includegraphics[width=0.45\textwidth]{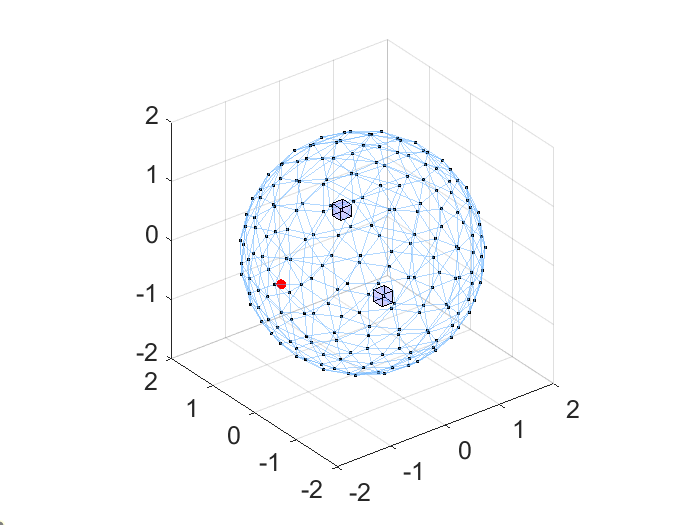}}
\subfigure[exact scatterers\label{subfig:geo3Dproj}]{
\includegraphics[width=0.45\textwidth]{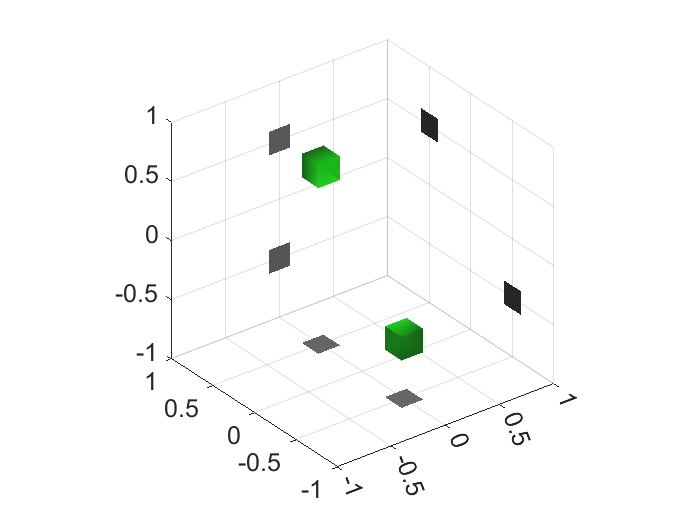}}\\
\subfigure[slice plots of the reconstruction\label{subfig:slice3D}]{
\includegraphics[width=0.45\textwidth]{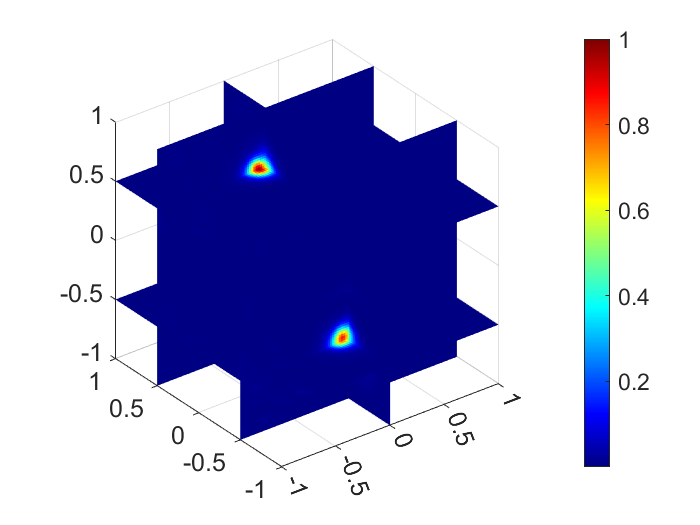}}
\subfigure[isosurface plot of the reconstruction\label{subfig:iso3D}]{
\includegraphics[width=0.45\textwidth]{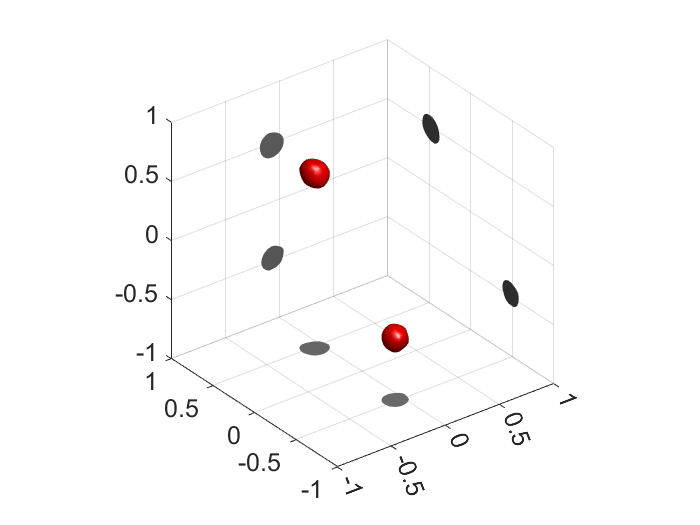}}
\caption{\label{fig:re3D}Plots of the exact and reconstructed scatterers in three dimensions.}
\end{figure}

\appendix

\section{Direct Sampling Method in the Frequency Domain}
In this appendix, we present a non-decoupled direct sampling method in the frequency domain that is applicable to both two- and three-dimensional elastic wave problems. This approach provides an efficient tool for locating scatterers from full-aperture measurement data without requiring iterative inversion procedures.

\subsection{Mathematical Formulation}
We consider the same configuration as that described in \eqref{eq:confi}. Let $\hat{\bm{u}}^i$ denote the time-harmonic incident elastic wave satisfying the governing elastic equation in the whole space $\mathbb{R}^d$, $d=2,3$:
\begin{equation}\label{eq:inci}
\mathcal{L}_{ {\lambda_2}, {\mu_2}}\hat{\bu}^i(\bx) + \omega^2  {\rho_2} \hat{\bu}^i(\bx) =0, \quad \bx\in\mathbb{R}^d.
\end{equation}
The total displacement field $\hat{\bm{u}}$ satisfies
\begin{equation}\label{eq:to1}
\mathcal{L}_{ {\lambda}, {\mu}}\hat{\bu}(\bx)+\omega^2 \rho(\bm{x})\hat{\bu}(\bx)=0,\quad \bx\in\mathbb{R}^d,
\end{equation}
and the scattered wave $\hat{\bu}^s:=\hat{\bu}-\hat{\bu}^i$ satisfies the radiation condition
\begin{equation*}
\begin{aligned}
(\nabla\times\nabla\times \hat{\bu}^s)(\bx)\times\frac{\bx}{|\bx|}-\mathrm{i} {k}_{s}\nabla\times \hat{\bu}^s(\bx)=&\mathcal{O}(|\bx|^{-2}),\\
\frac{\bx}{|\bx|}\cdot[\nabla(\nabla\cdot \hat{\bu}^s)](\bx)-\mathrm{i} {k}_{p}\nabla \hat{\bu}^s(\bx)=&\mathcal{O}(|\bx|^{-2}),
\end{aligned}
\end{equation*}
as $\|\bx\|\rightarrow \infty$, where $k_s=\omega/c_s$ and $k_p=\omega/c_p$, with $c_s$ and $c_p$ defined in Section~\ref{sec:1}. Here, $k_s$ and $k_p$ denote the wavenumbers corresponding to the shear and pressure waves of the background medium.

To facilitate the analysis, we introduce the perturbation parameters
\begin{equation*}
\lambda_3(\bx)=\lambda(\bx)-\lambda_2 , \quad \mu_3(\bx)=\mu(\bx)-\mu_2, \quad \rho_3(\bx)=\rho(\bx) -\rho_2,
\end{equation*}
which are compactly supported in the scatterer domain $D$. Then \eqref{eq:to1} can be rewritten as
\begin{equation}\label{eq:reto1}
\mathcal{L}_{ {\lambda_2}, {\mu_2}}\hat{\bu}(\bx) + \omega^2  {\rho_2} \hat{\bu}(\bx) = -\mathcal{L}_{ {\lambda_3}, {\mu_3}}\hat{\bu}(\bx) - \omega^2\rho_3(\bx)\hat{\bu}(\bx).
\end{equation}
Subtracting \eqref{eq:inci} from \eqref{eq:reto1} gives
\begin{equation*}
\mathcal{L}_{ {\lambda_2}, {\mu_2}}\hat{\bu}^s(\bx) + \omega^2  {\rho_2} \hat{\bu}^s(\bx) = -\mathcal{L}_{ {\lambda_3}, {\mu_3}}\hat{\bu}(\bx) - \omega^2\rho_3(\bx)\hat{\bu}(\bx).
\end{equation*}
Defining
\begin{equation*}
\bm{f}(\bx)=\mathcal{L}_{ {\lambda_3}, {\mu_3}}\hat{\bu}(\bx) + \omega^2\rho_3(\bx)\hat{\bu}(\bx),
\end{equation*}
the Lippmann--Schwinger representation yields
\begin{equation}\label{eq:fresol}
\hat{\bu}^s(\bx)=\int_D \bGa^\omega(\bx,\by)\bm{f}(\by) \,\mathrm{d}\by,
\end{equation}
where $\bGa^{\omega}=(\Gamma^{\omega}_{i,j})_{i,j=1}^d$ is the fundamental solution of the background elastic operator; see \eqref{eq:Gamma}. In the static case $\omega=0$, the corresponding fundamental solution $\bm{\Gamma}^0$ is
\begin{equation*}
    \Gamma^0_{i,j} (\bx)= \left\{
    \begin{aligned}
    &-\frac{1}{4\pi}\Big( \frac{1}{\mu} + \frac{1}{\lambda + 2\mu} \Big)  \delta_{ij} \ln|\bx|  +\frac{1}{4\pi}\Big( \frac{1}{\mu} - \frac{1}{\lambda + 2\mu} \Big)  \frac{\bx_i \bx_j}{|\bx|^2 }, &d=2, \\
    &-\frac{1}{8\pi}\Big( \frac{1}{\mu} + \frac{1}{\lambda + 2\mu} \Big)  \frac{\delta_{ij}}{|\bx|}  -\frac{1}{8\pi}\Big( \frac{1}{\mu} - \frac{1}{\lambda + 2\mu} \Big)  \frac{\bx_i \bx_j}{|\bx|^3 }, &d=3.
    \end{aligned}\right.
   \end{equation*}

\subsection{Frequency-Domain Direct Sampling Method}
Assume that full-aperture measurement data are collected on an observation surface $\Gamma\subset \mathbb{R}^d\backslash \overline{D}$, which, for simplicity, is taken to be a sphere of radius $r$. The frequency-domain indicator function is defined by
\begin{equation*}
\hat{\mathcal{I}}(\bz)=\int_\Gamma \overline{\Upsilon^\omega (\bx,\bz)}\hat{\bu}^s(\bx) \,\mathrm{d}s(\bx),
\end{equation*}
where $\Upsilon^\omega (\bx,\by)$ is given by
\begin{equation*}
\Upsilon^{\omega}(\bx,\by) =   b_{d,1} \bGa_p(\bx) \frac{e^{\rmi k_p (|\bm{x}|- \hat{\bx}\cdot\by )}}{|\bx|^{\frac{d-1}{2}}} + b_{d,2} \bGa_s(\bx) \frac{e^{\rmi k_s (|\bm{x}|- \hat{\bx}\cdot\by )}}{|\bm{x}|^{\frac{d-1}{2}}},
\end{equation*}
and the parameters $b_{d,1}$ and $b_{d,2}$ are given by
\[b_{2,1}=(\lambda_2+2\mu_2)^{3/2}, \quad b_{2,2}=\mu_2^{3/2},\quad b_{3,1}=(\lambda_2+2\mu_2)^2, \quad b_{3,2}=\mu_2^2.\]
This indicator function is designed to exhibit local maxima near the scatterer locations, thereby providing a non-iterative approach for detecting the presence and approximate positions of inclusions.

To analyze the behavior of $\hat{\mathcal{I}}(\bz)$, we introduce
\begin{equation}\label{eq:defG}
\mathbf{G}(\bz,\by)=\int_\Gamma \overline{\Upsilon^\omega (\bx,\bz)} \bGa^\omega(\bx,\by)\,\mathrm{d}s(\bx),
\end{equation}
which captures the interaction between the sampling point $\bz$ and the source point $\by$.

\begin{lem}\label{lem:bessel}
For $\bm{\alpha}\in \mathbb{R}^3$, the following identities hold:

\begin{equation*}
    \begin{aligned}
    &\int_{ \mathbb{S}^2} \mathbf{I} e^{-\mathrm{i} \hat{\bm{x}}\cdot \bm{\alpha}}\,\mathrm{d}s(\hat{\bm{x}}) = 4\pi j_0( |\bm{\alpha}|)\mathbf{I},\\
    &\int_{ \mathbb{S}^2} \hat{\bm{x}}\hat{\bm{x}}^\top e^{-\mathrm{i}\hat{\bm{x}}\cdot \bm{\alpha}}\,\mathrm{d}s(\hat{\bm{x}}) = \frac{4\pi}{3}(j_0(c^{-1}|\bm{\alpha}|)+j_2(c^{-1}|\bm{\alpha}|))\mathbf{I} -4\pi j_2( c^{-1}|\bm{\alpha}|)\hat{\bm{\alpha}}\hat{\bm{\alpha}}^T.
    \end{aligned}
    \end{equation*}
    Furthermore, in the two-dimensional case, for $\bm{\alpha}\in \mathbb{R}^2$, one has
    \begin{equation*}
    \begin{aligned}
    &\int_{ \mathbb{S}^1} \mathbf{I} e^{-\mathrm{i} \hat{\bm{x}}\cdot \bm{\alpha}}\,\mathrm{d}s(\hat{\bm{x}}) = 2\pi J_0( |\bm{\alpha}|)\mathbf{I},\\
    &\int_{ \mathbb{S}^1} \hat{\bm{x}}\hat{\bm{x}}^\top e^{-\mathrm{i}\hat{\bm{x}}\cdot \bm{\alpha}}\,\mathrm{d}s(\hat{\bm{x}}) = \pi(J_0(|\bm{\alpha}|)+J_2(|\bm{\alpha}|)) \mathbf{I} -2\pi J_0(|\bm{\alpha}|)\hat{\bm{\alpha}}\hat{\bm{\alpha}}^T.
    \end{aligned}
    \end{equation*}
    Here, $j_n$ denotes the spherical Bessel function of the first kind of order $n$, and $J_n$ denotes the cylindrical Bessel function of the first kind of order $n$.
\end{lem}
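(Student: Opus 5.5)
The plan is to reduce everything to one scalar radial integral and then differentiate twice in $\bm{\alpha}$, instead of expanding in spherical harmonics as in the proof of Lemma \ref{lem:integral}. This is the purely oscillatory analogue of that lemma. I read the constant $c$ in the three-dimensional formula as $c=1$. For the $\hat{\bm{\alpha}}\hat{\bm{\alpha}}^\top$ coefficient in the two-dimensional formula, the computation below gives $J_2$ rather than $J_0$, so I expect the statement to contain a misprint there.

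First, define $F_d(\bm{\alpha}):=\int_{\mathbb{S}^{d-1}} e^{-\mathrm{i}\hat{\bm{x}}\cdot\bm{\alpha}}\,\mathrm{d}s(\hat{\bm{x}})$. By rotational invariance $F_d$ depends only on $\rho=|\bm{\alpha}|$, so I may take $\bm{\alpha}=\rho\bm{e}_d$.
\begin{itemize}
\item In $d=3$, spherical coordinates give $F_3=2\pi\int_{-1}^{1}e^{-\mathrm{i}\rho t}\,\mathrm{d}t=4\pi\sin\rho/\rho=4\pi j_0(\rho)$.
\item In $d=2$, the integral representation of $J_0$ gives $F_2=\int_0^{2\pi}e^{-\mathrm{i}\rho\cos\varphi}\,\mathrm{d}\varphi=2\pi J_0(\rho)$.
\end{itemize}
Since the identity matrix factors out, this already proves the first identity in each dimension.

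Second, for the tensor integrals I differentiate under the integral sign, which is legitimate because the integrand is smooth and the sphere is compact:
\[
\int_{\mathbb{S}^{d-1}}\hat{\bm{x}}\hat{\bm{x}}^\top e^{-\mathrm{i}\hat{\bm{x}}\cdot\bm{\alpha}}\,\mathrm{d}s(\hat{\bm{x}})=-\nabla_{\bm{\alpha}}\nabla_{\bm{\alpha}}^\top F_d(\bm{\alpha}).
\]
For a radial function $f(|\bm{\alpha}|)$ the Hessian is
\[
\nabla\nabla^\top f=f''(\rho)\,\hat{\bm{\alpha}}\hat{\bm{\alpha}}^\top+\frac{f'(\rho)}{\rho}\big(\mathbf{I}-\hat{\bm{\alpha}}\hat{\bm{\alpha}}^\top\big).
\]
So it remains to evaluate $-f''$ and $-f'/\rho$ for $f=4\pi j_0$ and for $f=2\pi J_0$.

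Third, I express these in terms of order-$0$ and order-$2$ Bessel functions using the standard recurrences.
\begin{itemize}
\item For $d=3$: $j_0'=-j_1$, $j_1/\rho=(j_0+j_2)/3$ and $j_1'=(j_0-2j_2)/3$. Hence the Hessian of $F_3$, with the sign changed, equals
\[
4\pi\Big[\tfrac{j_0-2j_2}{3}\hat{\bm{\alpha}}\hat{\bm{\alpha}}^\top+\tfrac{j_0+j_2}{3}(\mathbf{I}-\hat{\bm{\alpha}}\hat{\bm{\alpha}}^\top)\Big]=\tfrac{4\pi}{3}(j_0+j_2)\mathbf{I}-4\pi j_2\,\hat{\bm{\alpha}}\hat{\bm{\alpha}}^\top .
\]
\item For $d=2$: $J_0'=-J_1$, $J_1/\rho=(J_0+J_2)/2$ and $J_1'=(J_0-J_2)/2$. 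This gives $\pi(J_0+J_2)\mathbf{I}-2\pi J_2\,\hat{\bm{\alpha}}\hat{\bm{\alpha}}^\top$.
\end{itemize}
As a consistency check, taking the trace recovers $4\pi j_0$ and $2\pi J_0$, as it must since $|\hat{\bm{x}}|=1$.

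Finally, the case $\bm{\alpha}=0$, where $\hat{\bm{\alpha}}$ is undefined, is handled by continuity. There $j_2(0)=J_2(0)=0$, and the right-hand sides reduce to $\tfrac{4\pi}{3}\mathbf{I}$ and $\pi\mathbf{I}$, which are the direct values of the integrals. The only step needing care is the bookkeeping of the Bessel recurrences in the third step, since a sign or index slip there is exactly what would produce the $J_0$ versus $J_2$ discrepancy. Everything else is routine.
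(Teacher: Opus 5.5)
Your argument is correct, and it takes a different route from the paper.

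The paper works by harmonic projection. It applies the Funk--Hecke formula $\int_{\mathbb{S}^2} e^{-\mathrm{i}\hat{\bm{x}}\cdot\bm{\alpha}}Y_n^m(\hat{\bm{x}})\,\mathrm{d}s(\hat{\bm{x}})=4\pi(-\mathrm{i})^n j_n(|\bm{\alpha}|)Y_n^m(\hat{\bm{\alpha}})$ to the degree-$0$ and degree-$2$ harmonics of \eqref{eq:harmonic}, into which the entries of $\hat{\bm{x}}\hat{\bm{x}}^\top$ decompose. In two dimensions it uses the Jacobi--Anger expansion instead, so only the modes $n=0,2$ survive.

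You compute only the scalar integral $F_d$ in closed form and obtain the second moment as $-\nabla_{\bm{\alpha}}\nabla_{\bm{\alpha}}^\top F_d$. This needs only $\sin\rho/\rho$, the integral representation of $J_0$, the Hessian of a radial function and two standard recurrences. It makes the structure $a\mathbf{I}+b\,\hat{\bm{\alpha}}\hat{\bm{\alpha}}^\top$ automatic, and the same computation serves both dimensions.

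The harmonic route has its own advantages. It shows directly that the trace-free part of $\hat{\bm{x}}\hat{\bm{x}}^\top$ is governed by the order-$2$ function, with sign $(-\mathrm{i})^2=-1$. It also extends to higher moments without iterated differentiation. Its cost is that the matrix must be reassembled from the explicit $Y_2^m$, which is where a coefficient can slip.

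Your trace check does show the printed two-dimensional formula cannot hold: its trace is $2\pi J_2$, not $2\pi J_0$. Your coefficient $-2\pi J_2(|\bm{\alpha}|)$ is the right one. It is also what the paper itself uses in the proof of Lemma~\ref{lem:G_decay}, where $\mathbf{G}_p$ for $d=2$ carries $2\pi J_2$ in front of $\hat{\bm{\eta}}\hat{\bm{\eta}}^\top$. Reading $c=1$ in the three-dimensional formula is likewise consistent with the first identity and with that later use.
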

\begin{proof}
Applying the Funk--Hecke formula \cite{Colton_2019}
\begin{equation*}
    \int_{ \mathbb{S}^2}e^{-\mathrm{i}\hat{\bm{x}}\cdot \bm{\alpha}} Y_n^m(\hat{\bm{x}})\,\mbox{d}s(\hat{\bm{x}})=4\pi  (-\mathrm{i})^n  j_n( |\bm{\alpha}|)Y_n^m(\hat{\bm{\alpha}}),
\end{equation*}
together with the explicit forms of the spherical harmonic functions in \eqref{eq:harmonic} yields the three-dimensional identities. Similarly, the two-dimensional case follows from the Jacobi--Anger expansion
    \begin{equation*}
    e^{-\mathrm{i}\hat{\bm{x}}\cdot\bm{\alpha}}=J_0(|\bm{\alpha}|)+ 2\sum_{n=1}^{\infty}(-\mathrm{i})^n J_n(|\bm{\alpha}|) \cos(n\varphi),
    \end{equation*}
    where $\varphi$ is the angle between $\bm{x}$ and $\bm{\alpha}$.
\end{proof}

\begin{lem}\label{lem:G_decay}
The function $\mathbf{G}(\bz,\by)$ defined in \eqref{eq:defG} decays to zero as $\bz$ moves away from $\by$. More precisely,
\begin{equation*}
\mathbf{G}(\bz,\by)=\mathcal{O}\Big(\frac{1}{|\bz-\by|^{\frac{d-1}{2}}}\Big) + \mathcal{O}\Big(\frac{1}{r^{\frac{d-1}{2}}}\Big),\quad |\bz-\by|\to\infty,\, r\to\infty.
\end{equation*}
\end{lem}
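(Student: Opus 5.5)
The plan is to mimic the proof of Lemma \ref{lem:Gomega}, now with real frequency $\omega$ and a purely oscillatory test function. The argument has three steps.

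\textbf{Step 1: far-field reduction.} Insert the far-field expansion of $\bGa^\omega(\bx,\by)$ recalled after \eqref{eq:Gamma} into \eqref{eq:defG}. Using $|\bx-\by| = r - \hat{\bx}\cdot\by + \mathcal{O}(1/r)$ and $|\bx-\by|^{-(d-1)/2} = r^{-(d-1)/2}\{1+\mathcal{O}(1/r)\}$ on $\Gamma=\{|\bx|=r\}$, together with $\bGa_\tau(\bx-\by)=\bGa_\tau(\bx)+\mathcal{O}(1/r)$, we obtain
\begin{equation*}
\bGa^\omega(\bx,\by)=\sum_{\tau=p,s} a_{d,\tau}\,\omega^{-\frac{3-d}{2}}\bGa_\tau(\hat{\bx})\frac{e^{\rmi k_\tau (r-\hat{\bx}\cdot\by)}}{r^{\frac{d-1}{2}}}+\mathcal{O}\big(r^{-\frac{d+1}{2}}\big),
\end{equation*}
where $a_{d,p}=a_{d,1}$ and $a_{d,s}=a_{d,2}$. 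Multiply by $\overline{\Upsilon^\omega(\bx,\bz)}$. Since $\omega$ is real, the factors $e^{\pm\rmi k_\tau r}$ cancel within each mode, leaving $e^{\rmi k_\tau \hat{\bx}\cdot(\bz-\by)}$. The $\bGa_p$, $\bGa_s$ are real symmetric, so by Lemma \ref{lem:gamma_ps} the cross terms $\bGa_p\bGa_s$ vanish and $\bGa_\tau\bGa_\tau=-\bGa_\tau$.

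\textbf{Step 2: explicit angular integrals.} Using $\mathrm{d}s(\bx)=r^{d-1}\mathrm{d}s(\hat{\bx})$, the powers of $r$ cancel and the leading part becomes
\begin{equation*}
-\sum_{\tau=p,s} b_{d,\tau}\,a_{d,\tau}\,\omega^{-\frac{3-d}{2}}\int_{\mathbb{S}^{d-1}}\bGa_\tau(\hat{\bx})\,e^{-\rmi k_\tau\hat{\bx}\cdot(\by-\bz)}\,\mathrm{d}s(\hat{\bx}).
\end{equation*}
Lemma \ref{lem:bessel} with $\bm{\alpha}=k_\tau(\by-\bz)$ evaluates these integrals explicitly as combinations of $\mathbf{I}$ and $\hat{\bm\alpha}\hat{\bm\alpha}^\top$. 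The coefficients are $j_0,j_2$ (resp.\ $J_0,J_2$) at $k_\tau|\by-\bz|$.

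\textbf{Step 3: decay and remainders.} The classical asymptotics $|j_n(t)|=\mathcal{O}(t^{-1})$ and $|J_n(t)|=\mathcal{O}(t^{-1/2})$ as $t\to\infty$ give $\mathcal{O}(|\bz-\by|^{-(d-1)/2})$ for fixed $\omega$. The remainder consists of the $\mathcal{O}(r^{-(d+1)/2})$ far-field error and the $\mathcal{O}(1/r)$ phase and amplitude corrections, integrated against $|\Upsilon^\omega|=\mathcal{O}(r^{-(d-1)/2})$ over a surface of measure $\mathcal{O}(r^{d-1})$. A naive bound yields $\mathcal{O}(1/r)$, which is $\le \mathcal{O}(r^{-(d-1)/2})$ for $d=2,3$ when $r\ge1$. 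This gives the stated estimate.

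The main obstacle is making Step 3 uniform. One must check that the $\mathcal{O}(1/r)$ corrections in the phase $e^{\rmi k_\tau|\bx-\by|}$ stay small when multiplied by $k_\tau$. This needs $k_\tau|\by|^2/r\to0$, so I would state it for fixed $\omega$ and bounded $\by$, with $r\to\infty$ first. I would also double-check the $\hat{\bm\alpha}\hat{\bm\alpha}^\top$ coefficient in the 2D formula of Lemma \ref{lem:bessel}; it should be $J_2$. Either way, every coefficient decays like $t^{-1/2}$, so the conclusion is unaffected.
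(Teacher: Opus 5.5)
Your proposal is correct and follows the paper's proof essentially step for step. The far-field expansion of $\bGa^\omega$ together with Lemma \ref{lem:gamma_ps} reduces $\mathbf{G}(\bz,\by)$ to $-\sum_{\tau=p,s}a_{d,\tau}b_{d,\tau}\,\omega^{-\frac{3-d}{2}}\int_{\mathbb{S}^{d-1}}\bGa_\tau(\hat{\bx})e^{\pm\rmi k_\tau\hat{\bx}\cdot(\bz-\by)}\,\mathrm{d}s(\hat{\bx})$ plus a remainder; Lemma \ref{lem:bessel} evaluates these integrals, and the large-argument decay of $J_n$, $j_n$ gives the $|\bz-\by|^{-\frac{d-1}{2}}$ rate. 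Your explicit remainder bound is right: it is in fact $\mathcal{O}(1/r)$ for fixed $\omega$ and bounded $\by$, whereas the paper simply asserts $\mathcal{O}(r^{-\frac{d-1}{2}})$. Your correction of the 2D $\hat{\bm{\alpha}}\hat{\bm{\alpha}}^\top$ coefficient to $-2\pi J_2$ is also right, and it is what the paper's own proof actually uses.
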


\begin{proof}
    By a direct calculation using Lemma \ref{lem:gamma_ps}, we have
    \begin{equation}\label{eq:freqG}
    \mathbf{G}(\bz,\by)=-\frac{a_{d,1} b_{d,1}}{\omega^{\frac{3-d}{2}}} \mathbf{G}_p(\bz,\by)-\frac{a_{d,2} b_{d,2}}{\omega^{\frac{3-d}{2}}}\mathbf{G}_s(\bz,\by) +\mathcal{O}\Big(\frac{1}{r^{\frac{d-1}{2}}}\Big),
    \end{equation}
    where
    \begin{align*}
    \mathbf{G}_p(\bz,\by)=\int_{\mathbb{S}^{d-1}} \bGa_p(\bx) e^{-\mathrm{i}k_p\hat{\bm{x}}\cdot(\bz-\by)}\,\mathrm{d}s(\bx),\quad
    \mathbf{G}_s(\bz,\by)=\int_{\mathbb{S}^{d-1}} \bGa_p(\bx) e^{-\mathrm{i}k_s\hat{\bm{x}}\cdot(\bz-\by)}\,\mathrm{d}s(\bx).
    \end{align*}
    Let $\bm{\eta}=\bz-\by$. Using Lemma \ref{lem:bessel}, we obtain
    \begin{align*}
    \mathbf{G}_p(\bz,\by)&=\left\{
    \begin{aligned}
    -\pi(J_2(k_p|\bm{\eta}|)+J_0(k_p|\bm{\eta}|))\mathbf{I} +2\pi J_2( k_p|\bm{\eta}|)\hat{\bm{\eta}}\hat{\bm{\eta}}^\top,\quad &d=2,\\
    -\frac{4\pi}{3}(j_2(k_p|\bm{\eta}|)+j_0(k_p|\bm{\eta}|))\mathbf{I} +4\pi j_2( k_p|\bm{\eta}|)\hat{\bm{\eta}}\hat{\bm{\eta}}^\top ,\quad &d=3,
    \end{aligned}\right.\\
    \mathbf{G}_s(\bz,\by)&=\left\{
    \begin{aligned}
    \pi(J_2(k_s|\bm{\eta}|)-J_0(k_s|\bm{\eta}|))\mathbf{I} -2\pi J_2( k_s|\bm{\eta}|)\hat{\bm{\eta}}\hat{\bm{\eta}}^\top,\quad &d=2,\\
    \frac{4\pi}{3}(j_2(k_s|\bm{\eta}|)-2j_0(k_s|\bm{\eta}|))\mathbf{I} -4\pi j_2( k_s|\bm{\eta}|)\hat{\bm{\eta}}\hat{\bm{\eta}}^\top,\quad &d=3.
    \end{aligned}\right.
    \end{align*}
    For large arguments, the Bessel function $J_n(s)$ and the spherical Bessel function $j_n(s)$, $n=0,2$, admit the asymptotic expansions
    \begin{equation*}
    \begin{aligned}
    J_n(s)&=\sqrt{\frac{2}{\pi s}} \cos\big(s-\frac{n\pi}{2}-\frac{\pi}{4}\big) \big\{1+\mathcal{O}\big(\frac{1}{s}\big)\big\},\\
    j_n(s)&=\frac{1}{ s} \cos\big(s-\frac{n\pi}{2}-\frac{\pi}{2}\big) \big\{1+\mathcal{O}\big(\frac{1}{s}\big)\big\},
    \end{aligned}
    \end{equation*}
    as $s\rightarrow\infty$. Hence, for $d=2$, the terms involving $J_0$ and $J_2$ are of order $\mathcal{O}\big(\frac{1}{|\bz-\by|^{1/2}}\big)$, whereas for $d=3$, the terms involving $j_0$ and $j_2$ are of order $\mathcal{O}\big(\frac{1}{|\bz-\by|}\big)$. Therefore, in both two and three dimensions,
    \begin{equation*}
    \mathbf{G}_p(\bz,\by)=\mathcal{O}\Big(\frac{1}{|\bz-\by|^{\frac{d-1}{2}}}\Big),
    \quad
    \mathbf{G}_s(\bz,\by)=\mathcal{O}\Big(\frac{1}{|\bz-\by|^{\frac{d-1}{2}}}\Big),
    \end{equation*}
    as $|\bz-\by|\rightarrow\infty$. Substituting these estimates into the representation formula for $\mathbf{G}(\bz,\by)$ yields
    \begin{equation*}
    \mathbf{G}(\bz,\by)=\mathcal{O}\Big(\frac{1}{|\bz-\by|^{\frac{d-1}{2}}}\Big)
    +\mathcal{O}\Big(\frac{1}{r^{\frac{d-1}{2}}}\Big).
    \end{equation*}
    This completes the proof.
    \end{proof}

\begin{lem}\label{lem:G}
Let the function $\mathbf{G}(\bz,\by)$ be defined by \eqref{eq:defG}. For $\bz=\by$, one has
\begin{equation*}
\mathbf{G}(\bz,\by)=
\left\{
\begin{aligned}
&\frac{\sqrt{\pi}(1+\mathrm{i}) (\lambda_2+3\mu_2)}{4\sqrt{\omega}}\mathbf{I} + \mathcal{O}\Big(\frac{1}{\sqrt{r}}\Big), & d=2,\\
&\frac{\lambda_2+4\mu_2}{3}\mathbf{I} + \mathcal{O}\Big(\frac{1}{r}\Big), & d=3,
\end{aligned}\right.
\end{equation*}
and for $\bz$ near $\by$,
\begin{equation*}
\mathbf{G}(\bz,\by)=
\left\{
\begin{aligned}
&\frac{\sqrt{\pi}(1+\mathrm{i})}{4\sqrt{\omega}}\big(\lambda_2+3\mu_2 - \frac{3\omega^2 \rho_2}{4}|\bz-\by|^2\big)\mathbf{I} + \mathcal{O}(|\bz-\by|^4), & d=2,\\
&\big(\frac{\lambda_2+4\mu_2}{3} - \frac{\omega^2 \rho_2}{6}|\bz-\by|^2\big)\mathbf{I} + \mathcal{O}(|\bz-\by|^4), & d=3.
\end{aligned}\right.
\end{equation*}
\end{lem}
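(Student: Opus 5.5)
Starting from the representation \eqref{eq:freqG} derived in the proof of Lemma \ref{lem:G_decay},
\begin{equation*}
\mathbf{G}(\bz,\by)=-\frac{a_{d,1} b_{d,1}}{\omega^{\frac{3-d}{2}}} \mathbf{G}_p(\bz,\by)-\frac{a_{d,2} b_{d,2}}{\omega^{\frac{3-d}{2}}}\mathbf{G}_s(\bz,\by) +\mathcal{O}\Big(\frac{1}{r^{\frac{d-1}{2}}}\Big),
\end{equation*}
I will evaluate the closed-form expressions for $\mathbf{G}_p$ and $\mathbf{G}_s$ obtained there from Lemma \ref{lem:bessel}. These are built from $J_0,J_2$ for $d=2$ and from $j_0,j_2$ for $d=3$, with arguments $k_\tau|\bm{\eta}|$ and $\bm{\eta}=\bz-\by$. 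The whole proof then reduces to small-argument expansions of these Bessel functions. The $\mathcal{O}(r^{-(d-1)/2})$ remainder is carried along unchanged.

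\emph{Case $\bz=\by$.} Here $J_0(0)=j_0(0)=1$ and $J_2(0)=j_2(0)=0$. This gives $\mathbf{G}_p=-\pi\mathbf{I}$ and $\mathbf{G}_s=-\pi\mathbf{I}$ for $d=2$, and $\mathbf{G}_p=-\tfrac{4\pi}{3}\mathbf{I}$ and $\mathbf{G}_s=-\tfrac{8\pi}{3}\mathbf{I}$ for $d=3$. These values are simply the averages $\int\hat{\bx}\hat{\bx}^\top=\tfrac{|\mathbb{S}^{d-1}|}{d}\mathbf{I}$.

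Next I insert the constants from \eqref{eq:defa14} and the definitions of $b_{d,\tau}$.
\begin{itemize}
\item For $d=3$: $a_{3,1}b_{3,1}=\tfrac{\lambda_2+2\mu_2}{4\pi}$ and $a_{3,2}b_{3,2}=\tfrac{\mu_2}{4\pi}$. Hence $\mathbf{G}=\tfrac{\lambda_2+2\mu_2}{3}\mathbf{I}+\tfrac{2\mu_2}{3}\mathbf{I}=\tfrac{\lambda_2+4\mu_2}{3}\mathbf{I}$, as claimed.
\item For $d=2$: $a_{2,1}b_{2,1}=\tfrac{(1+\rmi)(\lambda_2+2\mu_2)}{4\sqrt\pi}$ and $a_{2,2}b_{2,2}=\tfrac{(1+\rmi)\mu_2}{4\sqrt\pi}$. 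Multiplying by $\pi/\sqrt\omega$ gives $\tfrac{\sqrt\pi(1+\rmi)(\lambda_2+3\mu_2)}{4\sqrt\omega}\mathbf{I}$.
\end{itemize}
The signs in $\mathbf{G}_p,\mathbf{G}_s$ must be chosen consistently with Lemma \ref{lem:gamma_ps}. Note that the paper's formula for $\mathbf{G}_s$ contains a typo, $\bGa_p$ where $\bGa_s$ is meant, and I will use the correct integrand.

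\emph{Case $\bz$ near $\by$.} I will Taylor expand:
\begin{itemize}
\item $J_0(s)=1-\tfrac{s^2}{4}+\mathcal{O}(s^4)$ and $J_2(s)=\tfrac{s^2}{8}+\mathcal{O}(s^4)$;
\item $j_0(s)=1-\tfrac{s^2}{6}+\mathcal{O}(s^4)$ and $j_2(s)=\tfrac{s^2}{15}+\mathcal{O}(s^4)$.
\end{itemize}
The quadratic terms produce contributions proportional to $k_\tau^2|\bm{\eta}|^2\mathbf{I}$ and also $k_\tau^2|\bm{\eta}|^2\hat{\bm{\eta}}\hat{\bm{\eta}}^\top$. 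I will then use $k_p^2=\omega^2\rho_2/(\lambda_2+2\mu_2)$ and $k_s^2=\omega^2\rho_2/\mu_2$. The factors $b_{d,\tau}a_{d,\tau}$ cancel the Lamé denominators, so each correction becomes a multiple of $\omega^2\rho_2|\bm{\eta}|^2$.

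\emph{Main obstacle.} The hard step is this bookkeeping of the quadratic coefficients. The stated result is purely isotropic, with coefficients $-\tfrac{3}{4}$ (2D, inside the bracket) and $-\tfrac16$ (3D). To match it, the anisotropic $\hat{\bm{\eta}}\hat{\bm{\eta}}^\top$ pieces from the $p$ and $s$ parts must either combine or be absorbed.

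My first attempt is a direct expansion of $\int\hat{\bx}\hat{\bx}^\top(1-\tfrac12 k^2(\hat{\bx}\cdot\bm\eta)^2)\,ds$ using the moment identity
\begin{equation*}
\int_{\mathbb{S}^{d-1}}\hat x_i\hat x_j\hat x_k\hat x_l\,ds=\frac{|\mathbb{S}^{d-1}|}{d(d+2)}(\delta_{ij}\delta_{kl}+\delta_{ik}\delta_{jl}+\delta_{il}\delta_{jk}).
\end{equation*}
This is a cleaner route than going through the Bessel forms. If the anisotropic parts do not cancel exactly, I would present the isotropic coefficient, obtained by averaging, i.e. taking the trace part. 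The expansion also has no odd-order terms, since all the integrands are even in $\hat{\bx}$, so the error is $\mathcal{O}(|\bz-\by|^4)$ as stated.
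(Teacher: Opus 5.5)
Your route is the same as the paper's: insert the small-argument expansions into \eqref{eq:freqG}. Your $\bz=\by$ values and the products $a_{d,\tau}b_{d,\tau}$ are correct, and you rightly spotted the $\bGa_p$ misprint in $\mathbf{G}_s$.

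The gap is that you stop at the only nontrivial step, and your fallback does not work. The $\hat{\bm{\eta}}\hat{\bm{\eta}}^\top$ terms are of order $|\bm{\eta}|^2$. They cannot be ``absorbed'' into $\mathcal{O}(|\bm{\eta}|^4)$, and discarding them by keeping only the trace part would change the claimed matrix identity. The missing observation is that they cancel exactly. In $\mathbf{G}_p$ and $\mathbf{G}_s$ the $\hat{\bm{\eta}}\hat{\bm{\eta}}^\top$ coefficients are $+2\pi J_2$ and $-2\pi J_2$ for $d=2$, resp.\ $+4\pi j_2$ and $-4\pi j_2$ for $d=3$, evaluated at $k_p|\bm{\eta}|$ and $k_s|\bm{\eta}|$. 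Their $s^2$ terms carry the weights $a_{d,1}b_{d,1}k_p^2$ and $a_{d,2}b_{d,2}k_s^2$, which are equal: both are $\omega^2\rho_2/(4\pi)$ for $d=3$ and $(1+\rmi)\omega^2\rho_2/(4\sqrt{\pi})$ for $d=2$.

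Hence $\mathbf{G}=\frac1d(\operatorname{tr}\mathbf{G})\mathbf{I}+\mathcal{O}(|\bm{\eta}|^4)$, and only now is your trace idea legitimate. Since $\operatorname{tr}(\hat{\bx}\hat{\bx}^\top)=1$ and $\operatorname{tr}(\mathbf{I}-\hat{\bx}\hat{\bx}^\top)=d-1$, the trace is exact up to the $r$-remainder:
\begin{align*}
d=2:&\quad \operatorname{tr}\mathbf{G}=\frac{\sqrt{\pi}(1+\rmi)}{2\sqrt{\omega}}\big[(\lambda_2+2\mu_2)J_0(k_p|\bm{\eta}|)+\mu_2J_0(k_s|\bm{\eta}|)\big],\\
d=3:&\quad \operatorname{tr}\mathbf{G}=(\lambda_2+2\mu_2)\,j_0(k_p|\bm{\eta}|)+2\mu_2\,j_0(k_s|\bm{\eta}|).
\end{align*}

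Expand using $(\lambda_2+2\mu_2)k_p^2=\mu_2k_s^2=\omega^2\rho_2$. For $d=3$ this gives $\mathbf{G}=\big(\frac{\lambda_2+4\mu_2}{3}-\frac{\omega^2\rho_2}{6}|\bz-\by|^2\big)\mathbf{I}$, as claimed. For $d=2$, however, it gives
\begin{equation*}
\mathbf{G}(\bz,\by)=\frac{\sqrt{\pi}(1+\rmi)}{4\sqrt{\omega}}\Big(\lambda_2+3\mu_2-\frac{\omega^2\rho_2}{2}|\bz-\by|^2\Big)\mathbf{I}+\mathcal{O}(|\bz-\by|^4),
\end{equation*}
so the coefficient is $\frac12$, not $\frac34$. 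The isotropic parts of the $p$- and $s$-terms contribute $\frac18$ and $\frac38$ respectively. Your moment identity confirms this: for $\bm{\eta}=h\bm{e}_1$, both diagonal entries of the bracket equal $\lambda_2+3\mu_2-(\frac38+\frac18)\omega^2\rho_2h^2$.

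So the two-dimensional near-$\by$ formula is false as stated. The paper's one-line proof is exactly your substitution, and its $\frac34$ is a computational slip. You should state and prove the corrected coefficient $\frac12$ rather than try to match $\frac34$. Also keep the $\mathcal{O}(r^{-(d-1)/2})$ remainder in the near-$\by$ formulas, as you do for $\bz=\by$.
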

\begin{proof}
    We note that the Bessel function $J_n(s)$ and the spherical Bessel function $j_n(s)$, $n=0,2$, admit the asymptotic expansions
    \begin{equation*}
    \begin{aligned}
    J_0(s)=1-\frac{s^2}{4}+\mathcal{O}(s^4),\quad J_2(s)=\frac{s^2}{8}+\mathcal{O}(s^4),\\
    j_0(s)=1-\frac{s^2}{6}+\mathcal{O}(s^4),\quad j_2(s)=\frac{s^2}{15}+\mathcal{O}(s^4),\\
    \end{aligned}
    \end{equation*}
    as $s\rightarrow 0$. Substituting these expansions into \eqref{eq:freqG} and simplifying the resulting expression yield the desired result.
    \end{proof}

We are now ready to state the properties of the indicator function $\hat{\mathcal{I}}(\bz)$.
\begin{thm}
 Assume that the scatterer configuration is given by \eqref{eq:Dscatters}. Then the indicator function satisfies the following properties:
\begin{itemize}
\item If $\bz$ lies near a scatterer $D_j$, then, as $\varepsilon\rightarrow0$, $|\bz-\by^j|\rightarrow 0$, $r\rightarrow\infty$, and $L\rightarrow\infty$,
\begin{equation*}
\hat{\mathcal{I}}(\bz) = \varepsilon^d C_d \int_{B_j} \bm{f}(\by^j+\varepsilon\bm{\zeta})\,\mathrm{d}\bm{\zeta} 
\Big\{
1 + \mathcal{O}(|\bz-\by^j|^2) + \mathcal{O}\big(\frac{1}{r^{\frac{d-1}{2}}}\big) + \mathcal{O}(\varepsilon) + \mathcal{O}\big(\frac{1}{L^{\frac{d-1}{2}}}\big)
\Big\},
\end{equation*}
where
    \begin{equation*}
    C_2=\frac{\sqrt{\pi}(1+\mathrm{i}) (\lambda_2+3\mu_2)}{4\sqrt{\omega}},\quad C_3= \frac{\lambda_2+4\mu_2}{3}.
    \end{equation*}
\item If $\bz$ is far from all scatterers, then, as $\varepsilon\rightarrow0$ and $r\rightarrow\infty$,
\begin{equation*}
\hat{\mathcal{I}}(\bz) = \varepsilon^d \sum_{j=1}^N \int_{B_j} \bm{f}(\by^j+\varepsilon\bm{\zeta})\,\mathrm{d}\bm{\zeta} 
\Big\{
\mathcal{O}\big(\frac{1}{r^{\frac{d-1}{2}}}\big) + \mathcal{O}(\varepsilon) + \mathcal{O}\big(\frac{1}{\mathrm{dist}(\bz,D)^{\frac{d-1}{2}}}\big)
\Big\}.
\end{equation*}
\end{itemize}
\end{thm}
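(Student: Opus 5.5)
The plan is to substitute the Lippmann--Schwinger representation \eqref{eq:fresol} into the definition of $\hat{\mathcal{I}}(\bz)$ and exchange the order of integration (Fubini applies, since $\Gamma$ is bounded away from $\overline{D}$ and all kernels are smooth there). This gives
\begin{equation*}
\hat{\mathcal{I}}(\bz)=\int_D \mathbf{G}(\bz,\by)\bm{f}(\by)\,\mathrm{d}\by ,
\end{equation*}
with $\mathbf{G}$ as in \eqref{eq:defG}. Using the scatterer configuration \eqref{eq:Dscatters}, I then split this into $\sum_{k=1}^N\int_{D_k}$. In each piece I change variables $\by=\by^k+\varepsilon\bm{\zeta}$, which produces the factor $\varepsilon^d$. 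Since $\mathbf{G}(\bz,\cdot)$ is smooth in $\by$ (its representation \eqref{eq:freqG} involves only Bessel functions of $k_\tau|\bz-\by|$), a first-order Taylor expansion gives
\begin{equation*}
\mathbf{G}(\bz,\by^k+\varepsilon\bm{\zeta})=\mathbf{G}(\bz,\by^k)+\mathcal{O}(\varepsilon),
\end{equation*}
uniformly in $\bm{\zeta}\in B_k$. Hence
\begin{equation*}
\hat{\mathcal{I}}(\bz)=\varepsilon^d\sum_{k=1}^N\big(\mathbf{G}(\bz,\by^k)+\mathcal{O}(\varepsilon)\big)\int_{B_k}\bm{f}(\by^k+\varepsilon\bm{\zeta})\,\mathrm{d}\bm{\zeta}.
\end{equation*}

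\emph{Case $\bz$ near $D_j$.} Assume $|\bz-\by^j|<L/2$ and treat the $k=j$ term separately. For this term, Lemma \ref{lem:G} gives
\begin{equation*}
\mathbf{G}(\bz,\by^j)=C_d\big(1+\mathcal{O}(|\bz-\by^j|^2)\big)\mathbf{I}+\mathcal{O}\big(r^{-\frac{d-1}{2}}\big).
\end{equation*}
The correction $-\frac{\omega^2\rho_2}{6}|\bz-\by^j|^2$ (and its $d=2$ analogue) is absorbed into $\mathcal{O}(|\bz-\by^j|^2)$ after dividing by $C_d\neq0$. For $k\neq j$ we have $|\bz-\by^k|\ge L/2$, so Lemma \ref{lem:G_decay} yields $\mathbf{G}(\bz,\by^k)=\mathcal{O}(L^{-\frac{d-1}{2}})+\mathcal{O}(r^{-\frac{d-1}{2}})$. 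Factoring out $\varepsilon^d C_d\int_{B_j}\bm{f}\,\mathrm{d}\bm{\zeta}$ then produces the claimed bracket.

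\emph{Case $\bz$ far from all scatterers.} Lemma \ref{lem:G_decay} applies to every $k$ with $|\bz-\by^k|\ge\mathrm{dist}(\bz,D)$. Keeping the $\mathcal{O}(\varepsilon)$ remainder from the Taylor step gives the second formula directly.

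\emph{Main obstacle.} The hard step is legitimizing the relative error $\mathcal{O}(L^{-\frac{d-1}{2}})$ in the first case. The contributions of the other scatterers are weighted by $\int_{B_k}\bm{f}\,\mathrm{d}\bm{\zeta}$ rather than by $\int_{B_j}\bm{f}\,\mathrm{d}\bm{\zeta}$. So I must either assume these moments are comparable (uniformly bounded above, with the $j$-th moment bounded away from zero) or interpret the bracket loosely, in the same way the paper does in Theorem \ref{thm:asymptotic}. I would state this comparability as an implicit hypothesis.

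A secondary concern is that $\bm{f}$ contains $\mathcal{L}_{\lambda_3,\mu_3}\hat{\bu}$, which for piecewise constant $\lambda,\mu$ is a distribution supported on $\partial D$. I would read $\int_{B_j}\bm{f}$ in the weak sense, moving derivatives onto $\mathbf{G}$ via integration by parts. This changes only the $\mathcal{O}(\varepsilon)$ bookkeeping and not the leading term, because $\nabla_{\by}\mathbf{G}$ stays bounded near $\by^j$.
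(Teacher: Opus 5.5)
Your proposal follows essentially the same route as the paper. The paper Taylor-expands $\bGa^\omega(\bx,\cdot)$ about each $\by^k$ in \eqref{eq:fresol} before integrating over $\Gamma$, while you integrate over $\Gamma$ first and then expand $\mathbf{G}(\bz,\cdot)$; these are equivalent. The case split via Lemmas \ref{lem:G} and \ref{lem:G_decay} is the same as the paper's, and your moment-comparability hypothesis is exactly what the paper assumes without saying so when it absorbs the $k\neq j$ terms into the relative error $\mathcal{O}(L^{-\frac{d-1}{2}})$.

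One caution on your side remark about $\bm{f}$. If $\bm{f}$ really contained a divergence-form part, integrating by parts would leave a term $\varepsilon^d\,\nabla_{\by}\mathbf{G}(\bz,\by^j)$ paired with an $\mathcal{O}(1)$ moment of $\nabla\hat{\bu}$. That term is of the same order $\varepsilon^d$ as the leading term, since boundedness of $\nabla_{\by}\mathbf{G}$ supplies no extra factor $\varepsilon$, so this reading is not just harmless $\mathcal{O}(\varepsilon)$ bookkeeping. The paper avoids the issue by treating $\bm{f}$ as an integrable function on $D$.
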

 
\begin{proof}
    Applying the Taylor expansion to \eqref{eq:fresol}, the scattered wave $\hat{\bu}^s(\bx)$ can be written as
    \begin{equation*}
    \hat{\bu}^s(\bx)=\varepsilon^d \sum_{j=1}^{N} \bGa^\omega(\bx,\by^j) \Big(\int_{B_j} \bm{f}(\by^j+\varepsilon\bm{\zeta})\,\mathrm{d}\bm{\zeta} +\mathcal{O}(\varepsilon)\Big).
    \end{equation*}
    Consequently,
    \begin{equation*}
    \begin{aligned}
    \hat{\mathcal{I}}(\bz)=&\int_\Gamma \overline{\Upsilon^\omega (\bx,\bz)}\hat{\bu}^s(\bx) \,\mathrm{d}s(\bx)\\
    =&\varepsilon^d \sum_{j=1}^{N} \int_\Gamma \overline{\Upsilon^\omega (\bx,\bz)} \bGa^\omega(\bx,\by^j) \Big(\int_{B_j} \bm{f}(\by^j+\varepsilon\bm{\zeta})\,\mathrm{d}\bm{\zeta} +\mathcal{O}(\varepsilon)\Big) \,\mathrm{d}s(\bx)\\
    =&\varepsilon^d \sum_{j=1}^{N}\mathbf{G}(\bz,\by^j)\Big( \int_{B_j}  \bm{f}(\by^j+\varepsilon\bm{\zeta})\,\mathrm{d}\bm{\zeta} +\mathcal{O}(\varepsilon)\Big).
    \end{aligned}
    \end{equation*}
    Using the properties of $\mathbf{G}(\bz,\by)$ stated in Lemmas \ref{lem:G_decay} and \ref{lem:G}, if the sampling point $\bm{z}$ lies in a neighborhood of the scatterer $D_j$, then
    \begin{equation*}
    \begin{aligned}
    \hat{\mathcal{I}}(\bz)=&\varepsilon^d \mathbf{G}(\bz,\by^j)\Big( \int_{B_j}  \bm{f}(\by^j+\varepsilon\bm{\zeta})\,\mathrm{d}\bm{\zeta} +\mathcal{O}(\varepsilon)\Big)+\varepsilon^d\sum_{\substack{k=1\\ k\neq j}}^N \mathbf{G}(\bz,\by^k)\Big( \int_{B_k}  \bm{f}(\by^k+\varepsilon\bm{\zeta})\,\mathrm{d}\bm{\zeta} +\mathcal{O}(\varepsilon)\Big)\\
    =&\varepsilon^d C_d \int_{B_j} \bm{f}(\by^j+\varepsilon\bm{\zeta})\,\mathrm{d}\bm{\zeta}\Big\{1 +\mathcal{O}(|\bz-\by^j|^2) +\mathcal{O}\Big(\frac{1}{r^{\frac{d-1}{2}}}\Big) +\mathcal{O}(\varepsilon) +\mathcal{O}\Big(\frac{1}{L^{\frac{d-1}{2}}}\Big) \Big\},
    \end{aligned}
    \end{equation*}
    as $\varepsilon\rightarrow0$, $|\bz-\by^j|\rightarrow 0$, $r\rightarrow\infty$, and $L\rightarrow\infty$. On the other hand, if the sampling point $\bm{z}$ is located sufficiently far away from $D$, the same argument gives
    \begin{equation*}
    \hat{\mathcal{I}}(\bz)=\varepsilon^d \sum_{j=1}^{N} \int_{B_j} \bm{f}(\by^j+\varepsilon\bm{\zeta})\,\mathrm{d}\bm{\zeta} \Big\{\mathcal{O}\Big(\frac{1}{r^{\frac{d-1}{2}}}\Big) +\mathcal{O}(\varepsilon) +\mathcal{O}\Big(\frac{1}{\mathrm{dist}(\bm{z},D)^{\frac{d-1}{2}}}\Big) \Big\},
    \end{equation*}
    as $\varepsilon\rightarrow0$, $r\rightarrow\infty$, and $\mathrm{dist}(\bm{z},D)\rightarrow \infty$.
    
    This completes the proof.
\end{proof}

\noindent
In summary, the frequency-domain direct sampling method provides a fast and reliable approach for detecting scatterers from elastic wave measurements. Its theoretical foundation, based on the asymptotic behavior of the indicator function $\hat{\mathcal{I}}(\bz)$ and the decay properties of $\mathbf{G}(\bz,\by)$, ensures that sampling points near the scatterers produce local maxima, whereas points far away yield negligible values. This makes the method suitable for non-iterative imaging of multiple scatterers in both two- and three-dimensional domains. We emphasize that the proposed method does not rely on the Helmholtz decomposition of the elastic wave field; instead, it directly uses the measured scattered wave fields for imaging.

\bibliographystyle{abbrv}
\bibliography{bibliography}

\end{document}